\def\showauthornotes{0}
\def\showkeys{0}
\def\showdraftbox{1}
\def\widemargin{0}
\definecolor{darkred}{rgb}{0.5,0,0}
\definecolor{darkgreen}{rgb}{0,0.35,0}
\definecolor{darkblue}{rgb}{0,0,0.55}
\newcommand{\Authornote}[3]{{\sf\small\color{#3}{[#1: #2]}}}
\newcommand{\Authorcomment}[2]{{\sf \small\color{gray}{[#1: #2]}}}
\newcommand{\Authorfnote}[2]{\footnote{\color{red}{#1: #2}}}
\newcommand{\Authornote}[3]{}
\newcommand{\Authorcomment}[2]{}
\newcommand{\Authorfnote}[2]{}
\newtheorem{theorem}{Theorem}[section]
\newtheorem{definition}[theorem]{Definition}
\newtheorem{lemma}[theorem]{Lemma}
\newtheorem{remark}[theorem]{Remark}
\newtheorem{corollary}[theorem]{Corollary}
\newtheorem{claim}[theorem]{Claim}
\newtheorem{example}[theorem]{Example}
\newtheorem{algo}[theorem]{Algorithm}
\def\FullBox{\hbox{\vrule width 6pt height 6pt depth 0pt}}
\def\qed{\ifmmode\qquad\FullBox\else{\unskip\nobreak\hfil
\penalty50\hskip1em\null\nobreak\hfil\FullBox
\parfillskip=0pt\finalhyphendemerits=0\endgraf}\fi}
\def\qedsketch{\ifmmode\Box\else{\unskip\nobreak\hfil
\penalty50\hskip1em\null\nobreak\hfil$\Box$
\parfillskip=0pt\finalhyphendemerits=0\endgraf}\fi}
\def\to{\rightarrow}
\def\epsilon{\varepsilon}
\def\phi{\varphi}
\def\cal{\mathcal}
\def\implies{\Rightarrow}
\newcommand{\given}{\;\ifnum\currentgrouptype=16 \middle\fi \vert\;}
\newcommand{\ie}{i.e.,\xspace}
\newcommand{\etal}{et al.\xspace}
\newcommand{\mper}{\,.}
\newcommand{\mcom}{\,,}
\newcommand{\R}{{\mathbb R}}
\newcommand{\E}{{\mathbb E}}
\newcommand{\Z}{{\mathbb Z}}
\newcommand{\indicator}[1]{\mathds{1}_{\{#1\}}}
\newcommand{\gauss}[2]{{\cal N(#1, #2)}}
\newcommand{\abs}[1]{\ensuremath{\left\lvert #1 \right\rvert}}
\newcommand{\norm}[1]{\ensuremath{\left\lVert #1 \right\rVert}}
\def\bfx {{\bf x}}
\def\bfy {{\bf y}}
\def\bfz {{\bf z}}
\def\bfi {{\bf i}}
\def\bfj {{\bf j}}
\def\bfA{{\bf A}}
\def\bfB{{\bf B}}
\def\bfD{{\bf D}}
\def\bfF{{\bf F}}
\def\bfH{{\bf H}}
\def\bfM{{\bf M}}
\def\bfP{{\bf P}}
\def\bfT{{\bf T}}
\def\bfX {{\bf X}}
\def\bfY {{\bf Y}}
\newcommand{\Esymb}{\mathbb{E}}
\newcommand{\Hsymb}{\mathbb{H}}
\newcommand{\Psymb}{\mathbb{P}}
\newcommand{\Varsymb}{\mathrm{Var}}
\DeclareMathOperator*{\ExpOp}{\Esymb}
\def\Pr#1{%
    \ProbabilityRender{\Psymb}{#1}%
}
\def\Ex#1{%
    \ProbabilityRender{\Esymb}{#1}%
}
\def\condPE#1#2{%
	\@ifnextchar\bgroup
	{\ConditionalProbabilityRender{\widetilde{\Esymb}}{#1}{#2}}
	{\ProbabilityRender{\widetilde{\Esymb}}{#1 \given #2}}
}
\def\Var#1{%
    \ProbabilityRender{\Varsymb}{#1}%
}
\def\ConditionalProbabilityRender#1#2#3#4{
	\renderwithdist{#1}{#2}{#3 \given #4}	
}
\def\ProbabilityRender#1#2{
  \@ifnextchar\bgroup%
  {\renderwithdist{#1}{#2}}
   {\singlervrender{#1}{#2}}
}
\def\singlervrender#1#2{%
   \ensuremath{\mathchoice
       {{#1}\left[ #2 \right]}
       {{#1}[ #2 ]}
       {{#1}[ #2 ]}
       {{#1}[ #2 ]}
   }
}
\def\renderwithdist#1#2#3{%
   \@ifnextchar\bgroup
   {\superfancyrender{#1}{#2}{#3}}
   {\ensuremath{\mathchoice
      {\underset{#2}{#1}\left[ #3 \right]}
      {{#1}_{#2}[ #3 ]}
      {{#1}_{#2}[ #3 ]}
      {{#1}_{#2}[ #3 ]}
     }
   }
}
\def\superfancyrender#1#2#3#4#5{
   \ensuremath{\mathchoice
      {\underset{#1}{{#1}}\left#4 #3 \right#5}
      {{#1}_{#2}#4 #3 #5}
      {{#1}_{#2}#4 #3 #5}
      {{#1}_{#2}#4 #3 #5}
   }
}
\newfont{\inhead}{eufm10 scaled\magstep1}
\newcommand{\calG}{{\cal G}}
\newcommand{\calH}{{\cal H}}
\newcommand{\calP}{{\cal P}}
\newcommand{\calO}{{\cal O}}
\newcommand{\calX}{{\cal X}}
\newcommand{\calT}{{\cal T}}
\newcommand{\calZ}{{\cal Z}}
\DeclareMathOperator{\tr}{\operatorname {tr}}
\newcommand{\inparen}[1]{\left(#1\right)}             
\newcommand{\inbraces}[1]{\left\{#1\right\}}           
\newcommand{\insquare}[1]{\left[#1\right]}             
\newcommand{\Poincare}{Poincar\'e\xspace}
\newcommand{\Erdos}{Erd\H{o}s\xspace}
\newcommand{\Renyi}{R\'enyi\xspace}
\newcommand{\trans}[1]{{#1}^{\mathsf{T}}}
\title{Simple Norm Bounds for Polynomial Random Matrices via Decoupling}
\author{
Madhur Tulsiani\thanks{{\tt TTIC}. {\tt madhurt@ttic.edu}. Supported by NSF grant CCF-1816372 and CCF-2326685.} 
\and
June Wu \thanks{{\tt University of Chicago}. {\tt jqw@uchicago.edu}. } 
}
\newcommand{\mat}[1]{\mathbf{#1}}
\newcommand{\mA}{{\mat{A}}}
\newcommand{\mF}{{\mat{F}}}
\newcommand{\Esch}[2]{\EE \norm{#1}_{#2}^{#2}}
\newcommand{\EE}{\mathbb{E}}
\newcommand{\Etr}[1]{\EE \tr\left[#1\right]}
\newcommand{\T}{\intercal}
\newcommand{\resamp}[1]{\widetilde{#1}}
\begin{document}

\date{}

\maketitle

\thispagestyle{empty}

\begin{abstract}
We present a new method for obtaining norm bounds for random matrices, where each entry is a
low-degree polynomial in an underlying set of independent real-valued random variables.
Such matrices arise in a variety of settings in the analysis of spectral and optimization
algorithms, which require understanding the spectrum of a random matrix depending on data
obtained as independent samples.

Using ideas of decoupling and linearization from analysis, we show a simple way of expressing norm
bounds for such matrices, in terms of matrices of lower-degree polynomials corresponding to
derivatives. Iterating this method gives a simple bound with an elementary proof, which can recover
many bounds previously required more involved techniques. 
\end{abstract}

\newpage

\pagenumbering{roman}
\tableofcontents
\clearpage

\newpage
\pagenumbering{arabic}
\setcounter{page}{1}

\section{Introduction}
For fixed matrices $\mat{C_1}, \ldots, \mat{C_n}$ and  independent scalar random variables $x_1, \ldots, x_n$,
consider the problem of analyzing the random matrix
\[
\mat{M} ~=~ \mat{C_1} \cdot x_1 + \cdots + \mat{C_n} \cdot x_n \mper
\]
Note that the entries of the random matrix $\mat{M}$ are not necessarily independent, but are (possibly correlated) linear functions of the independent random variables $x_1, \ldots, x_n$. 
Such matrices, which arise in a variety of applications in algorithms, statistics, and numerical linear algebra, can often be shown as being concentrated around the mean, using a rich selection of matrix deviation inequalities~\cite{Tro15}. 
Moreover, when the random variables $x_1, \ldots, x_n$ are Gaussian, recent breakthrough results
have obtained even sharper concentration guarantees depending on the structure of the matrices
$\mat{C_1}, \ldots, \mat{C_n}$~\cite{BBMVH23, BVH22} already leading to applications in discrepancy
theory~\cite{BJM23} and quantum information theory~\cite{LY23}.

In contrast to the above random matrices which are linear functions of independent random variables, several recent applications in spectral algorithms and lower bounds for statistical problems require understanding the (expected) norms of random matrices of the form
\[
\mat{F} ~=~ \sum_{S \subseteq [n] \atop \abs{S} \leq d} \mat{C}_S \cdot \prod_{i \in S} x_i \mper
\]
The above random matrix, which denote as a matrix-valued function $\mat{F}(\bfx)$, is a (multilinear, in the example above) \emph{low-degree polynomial} in the vector of random variables $\bfx = (x_1, \ldots, x_n)$. 
Such matrix-valued polynomial functions arise naturally in a variety of applications, including
algorithms for tensor decomposition and completion~\cite{HSSS16, HSS19, KP20, DDOCHST22}, orbit
recovery~\cite{MW19}, power-sum decompositions and learning of Gaussian mixtures~\cite{BHKX22}, and
lower bounds for Sum-of-Squares hierarchies~\cite{AMP21, BHKKMP19, JPRTX21, R22:thesis}. 
In general, such matrices are often an important technical component in the analysis of spectral
algorithms, which require understanding the spectrum of some random matrix depending on data
obtained as independent samples. Given the expressive power of polynomials, one can often write the
matrix entries as low-degree polynomials in the data.

In several applications above, one is interested in obtaining bounds on the spectral norm of the
deviation matrix $\mat{F}(\bfx) - \ExpOp\mat{F}(\bfx)$, which hold with high probability over the choice of $\bfx$. 
Although matrix-deviation inequalities for nonlinear random matrices have been a subject of active research in recent work~\cite{PMT16, MJC12, ABY20, HT20, HT21}, the analyses in the applications above have often needed to rely on estimating matrix norms via direct computations of trace moments.
While these do yield sharp bounds required for applications, they require somewhat intricate computations and ingenious combinatorial arguments tailored to the applications at hand.

\paragraph{Analyzing concentration via moment estimates.}
We consider concentration bounds on a \emph{scalar} polynomial $f(\bfx)$ with mean zero. Using Markov's inequality, this can be reduced to computing moment estimates, since
\[
\Pr{\abs{f(\bfx)} \geq \lambda}
~=~ \Pr{\inparen{f(\bfx)}^{2t} \geq \lambda^{2t}}
~\leq~ \lambda^{-2t} \cdot {\Ex{\inparen{f(\bfx)}^{2t}}}
\]
Note that while in some cases $\Ex{\inparen{f(\bfx)}^{2t}}$ can be computed by direct expansion, it often involves an intricate analysis of the structure of terms with degrees growing with $t$, and therefore indirect methods may be more convenient.
Methods for obtaining concentration of (scalar) polynomial functions have been the subject of a large body of work, including results by Kim and Vu~\cite{KV00}, Lata{\l}a~\cite{Latala06}, Schudy and Sviridenko~\cite{SS11:multilinear, SS12:polynomials}, Adamczak and Wolff~\cite{AW15}, and Bobkov, G{\"o}tze and Sambale~\cite{BGS19}. 
A useful comparison to direct computation, is the method of Adamczak and Wolff, which is applicable to functions $f(\bfx)$ of a broad range of random vectors $\bfx$ (with not necessarily independent coordinates) from a distribution obeying the \Poincare inequality.
They obtain such moment bounds by recursive applications of the \Poincare inequality, reducing the computation of the moments of $f$ to that of its derivatives.
For a degree-$d$ polynomial function $f$, one can consider its coefficients as forming a (symmetric) order-$d$ tensor, and their results obtain bounds on the moments of $f$ in terms of the norms of various lower-order tensors ``flattening" of this coefficient tensor.
Thus, the problem of estimating moments of the random variable $f(\bfx)$, can be reduced in a
black-box way, to understanding the norms of a small number (depending on $d$) of
\emph{deterministic} tensors. Any dependence on the problem structure can then be limited to
understanding these norms, where often crude estimates can suffice. 

The matrix analog of the Markov argument involves the Schatten-$2t$ norm $\norm{.}_{2t}$, which is defined for a matrix $\mat{M} $ with non-zero singular values $\sigma_1, \ldots, \sigma_r$ as $\norm{\mat{M}}_{2t}^{2t} ~:=~ \sum_{j \in [r]} \sigma_j^{2t}$.
For a function $\mat{F}$ with $\Ex{\mat{F}(Z)} = \bf 0$, we have the following bound using Schatten norms.
\[
\Pr{\sigma_1(\mat{F}) \geq \lambda}
~\leq~ \lambda^{-2t} \cdot \Esch{\mF}{2t} 
~=~ \lambda^{-2t} \cdot \Etr{\inparen{\mF(\bfx)\mF(\bfx)^\T}^{t}}
\]
Several applications requiring norm bounds for matrix-valued polynomial functions, start with the
above inequality, and often rely on direct expansion of the trace for the matrix power
$\inparen{\mF(\bfx)\mF(\bfx)^\T}^{t}$.
Recall that when $\mA$ is the adjacency matrix of a graph, $\tr(\mA^{2t})$ can be interpreted as the number of cycles of length $2t$.
Similarly, when working with matrices $\mF(\bfx)$ where each entry can be interpreted as arising
from a combinatorial pattern, $\inparen{\mF(\bfx)\mF(\bfx)^\T}^{t}$ can be viewed as the number of
copies of such patterns ``chained" together in a certain way. Estimating the trace then amounts to
estimating the (expected) number of such chainings~\cite{AMP21}. 

Obtaining bounds on the $\Esch{\mF}{2t}$ via this method requires first using problem structure to
decompose $\mF$  into matrices with such combinatorial patterns, and then using (ingenious) combinatorial arguments to understand the expected number of occurrences of such chains of patterns.
The focus of this work is understanding alternative general methods for estimating such norm bounds, when the
underlying problem may not necessarily have a combinatorial flavor, or when such decompositions may
be hard to analyze.

\paragraph{Efron-Stein inequalities.}
Efron-Stein inequalities bound the global variance of a function of independent random variables, in terms of local variance estimates obtained by changing one variable at a time.
For $i \in [n]$ and tuple $\bfx = (x_1, \ldots, x_n)$, let $\bfx^{(i)}$ denote the tuple $(x_1, \ldots, x_{i-1}, \resamp{x_i}, x_{i+1}, \ldots, x_n)$, where $\resamp{x_i}$ is an independent copy of $x_i$.
For a scalar function $f(\bfx)$, the Efron-Stein inequality states that
\[
\Var{f(\bfx)} ~=~ \Ex{\inparen{f(\bfx) - \EE f}^2} ~\leq~ \frac12 \cdot \sum_{i \in [n]} \Ex{\inparen{f(\bfx) - f\inparen{\bfx^{(i)}}}^2} ~=~ \Ex{V(\bfx)}\mcom
\]
where $V(\bfx) ~:=~ \sum_{i \in [n]} \EE \insquare{(f(\bfx) - f(\bfx^{(i)}))^2 ~  \Big |  ~ \bfx ~}$.
%
%
Moment versions of the Efron-Stein inequality were developed by Boucheron \etal~\cite{BBLM05}, who
proved\footnote{In fact, the estimates of ~\cite{BBLM05} are in terms of more refined quantities
  $V_+(\bfx)$ and $V_{-}(\bfx)$, which are both upper bounded by $V(\bfx)$ but can be much smaller.}
\[
\Ex{\inparen{f(\bfx) - \EE f}^{2t}} ~\le~ (C_0 \cdot t)^t \cdot \Ex{\inparen{V(\bfx)}^{t}} \mper
\]
A beautiful matrix generalization of the above inequality was obtained by Paulin, Mackey and
Tropp~\cite{PMT16}, via the method of exchangeable pairs.
This was also extended in later works~\cite{ABY20,HT20} to settings where the random
input vector $\bfx$ does not necessarily come from a product distribution, but from a distribution
satisfying a Poincar{\'e} inequality.

Rajendran and Tulsiani~\cite{RT23}  used the matrix Efron-Stein inequality of Paulin, Mackey and
Tropp~\cite{PMT16}  to obtain norm bounds for matrix-valued polynomial functions $\mat{F}(\bfx)$ of
independent variables $x_1, \ldots, x_n$. 
For (say) a degree-$d$ homogeneous function $\mat{F}(\bfx)$ their method relied on iteratively
applying the matrix Efron-Stein inequality, and at each step interpreting the variance proxy
$\mat{V}(\bfx)$ as the square of a degree-$(d-1)$ polynomial matrix. 
While this method indeed yields simple bounds when $x_1, \ldots, x_n$ are independent Rademacher
random variables, this simple recursion based on the black-box use of the inequality
from~\cite{PMT16} unfortunately stalls for other product distributions. 
As a result, the proof and also the form of the result in ~\cite{RT23} was significantly more
involved for other product distributions (including Gaussians).
For the general case, the proof required modifying the exchangeable pairs argument of ~\cite{PMT16},
and the bounds reduced the question of estimating moments of matrix-valued functions to those of
scalar polynomials (which were then analyzed using known methods).

\paragraph{Decoupling.}
Decoupling inequalities were developed in the study of $U$-statistics \cite{PG:book}, multiple
stochastic integration \cite{MT84}, and polynomial chaos \cite{KW15}, and have found important
applications in applied mathematics, theoretical computer science, applied probability theory and
statistics. Some examples include the study of compressed sensing \cite{Rau10}, query complexity
\cite{DZ16}, the proof of Hanson-Wright inequality \cite{Ver18}, learning mixture of Gaussians
\cite{GHK15} and so on. 
Moreover, the inequalities are applicable in both scalar and matrix settings, and even more broadly
in Banach spaces. 

For a degree-$d$ homogeneous \emph{multilinear} polynomial $f(\bfx)$ in $n$ independent random
variables, standard decoupling inequalities (see \cref{sec:decoupling}) can be used to obtain
\[
\Ex{\bfx}{\inparen{f(\bfx)}^{2t}} ~\leq~ C_d^{2t} \cdot \Ex{\bfx^{(1)}, \ldots, \bfx^{(d)}}{\inparen{f(\bfx^{(1)}, \ldots, \bfx^{(d)})}^{2t}} \mcom
\]
where $f(\bfx^{(1)}, \ldots, \bfx^{(d)})$ denotes the polynomial in $d \cdot n$ random variables
obtained by replacing the $d$ variables in each (ordered) monomial by the corresponding $d$
coordinates coming from the $d$ independent random vectors $\bfx^{(1)}, \ldots, \bfx^{(d)}$, and
$C_d$ is a constant depending on the degree $d$. 
One can now fix the vectors $\bfx^{(1)}, \ldots, \bfx^{(d-1)}$, and consider the expectation 
\[
\Ex{\bfx^{(d)}}{\inparen{f(\bfx^{(1)}, \ldots, \bfx^{(d)})}^{2t}} ~=~ \Ex{\bfx^{(d)}}{\inparen{\sum_{i \in [n]} f_{i}(\bfx^{(1)}, \ldots, \bfx^{(d-1)}) \cdot x_i^{(d)}}^{2t}} \mcom
\] 
where we write $f(\bfx^{(1)}, \ldots, \bfx^{(d)})$ as a linear form in $\bfx^{(d)}$ with
coefficients $f_{i}(\bfx^{(1)}, \ldots, \bfx^{(d-1)})$.
The moments of such a linear form can be understood (for example) using the Khintchine inequality,
which bounds it in terms of a polynomial depending only on $\bfx^{(1)}, \ldots, \bfx^{(d-1)}$.

Similarly, in the matrix case, we use decoupling inequalities to reduce the problem of estimating
moments for degree-$d$ homogeneous (and multilinear) matrix-valued polynomials, to that of
estimating moments for \emph{linear} matrix-valued functions. 
At this point, one can apply standard and well-known linear matrix concentration inequalities (we
use the matrix Rosenthal inequality) which yield a bound in terms of a degree-$(d-1)$ matrix-valued
polynomial function of the vectors $\bfx^{(1)}, \ldots, \bfx^{(d-1)}$. 
Iterating this process gives a simple method for obtaining norm bounds, applicable for
\emph{multilinear} polynomial functions on $n$ independent random variables (with any reasonable
distribution).

While this method does not immediately apply for non-multilinear polynomials, it suffices for many
of the applications mentioned above. Moreover, for arbitrary polynomials in Gaussian random
variables, we can also obtain a simple recursion as an immediate consequence of the more recent
\Poincare inequalities of Huang and Tropp~\cite{HT21}. Together, these suffice for most
applications of interest.


\newcommand{\tuple}[2]{\calT^{#1}_{#2}}

\subsection*{Methods and results: a technical overview}
Let $x_1, \ldots, x_n$ be i.i.d real-valued random variables with $\Ex{x_i}=0, \Ex{x_i^2} = 1$ and
$\abs{x_i} \leq L$ for each $i \in [n]$. 
Let $\calT^d_n \subseteq [n]^d$ denote the set of ordered $d$-tuples $\bfi = (i_1, \ldots, i_d)$
with $i_1, \ldots, i_d$ all distinct.
For a fixed sequence of (deterministic) matrices $\inbraces{\mat{A}_{\bfi}}_{\bfi \in \tuple{d}{n}}
  \subseteq \R^{d_1 \times d_2}$, we consider a matrix-valued multilinear polynomial function
  defined as
\[
\mat{F}(\bfx) ~=~ \sum_{\bfi \in \tuple{d}{n}} \mat{A}_{\bfi} \cdot \prod_{j \in [d]} x_{i_j} \mper
\]
We write the polynomial in terms of ordered tuples for decoupling, but since the variables $x_1,
\ldots, x_n$ commute, we can assume that for any permutation $\sigma: [d] \to [d]$ permuting the $d$
coordinates of each tuple, $\mat{A}_{\bfi} = \mat{A}_{\sigma(\bfi)}$ (this is referred to as
\emph{permutation symmetry} of $\mat{F}$).
We can use the decoupling inequalities from \cref{sec:decoupling} to show that 
\[
\norm{\mat{F}(\bfx)}_{4t} 
~\leq~ C_d \cdot \norm{\mat{F}(\bfx^{(1)}, \ldots, \bfx^{(d)})}_{4t} 
~=~ C_d \cdot \norm{\sum_{\bfi \in \tuple{d}{n}} \mat{A}_{\bfi} \cdot \prod_{j \in [d]}
  x_{i_j}^{(j)}}_{4t} \mcom
\]
where $\norm{\cdot}_{4t}$ denotes the (expected) Schatten norm $\inparen{\Ex{\tr
    (\trans{\mat{F}(\bfx)}\mat{F}(\bfx))^{2t}}}^{1/4t}$.

We remark that while the constant $C_d$ is of the form $d^d$ in general, this can be improved when
the underlying set of indices $[n]$ has more structure. For example, when $[n]$ corresponds to the
set of \emph{pairs} in an base set $[r]$ and each of the monomials involves at most $k$ elements of
$[r]$ (has \emph{index degree} $k$), it is possible to improve the constant to $k^k$. 
This is essentially the ``vertex partitioning'' argument used in works on graph matrices, and is
discussed in \cref{sec:structured} in the language of decoupling.

As mentioned earlier, we can now ``linearize'' the function $\mat{F}(\bfx^{(1)}, \ldots,
\bfx^{(d)})$ by fixing $\bfx^{(d)}, \ldots, \bfx^{(d-1)}$ and treating it only as a function of
$\bfx^{(d)}$ \ie we consider
\begin{align*}
\ExpOp \norm{\mat{F}(\bfx^{(1)}, \ldots, \bfx^{(d)})}_{4t}^{4t}
~=~ 
\ExpOp \norm{\sum_{\bfi \in \tuple{d}{n}} \mat{A}_{\bfi} \cdot \prod_{j \in [d]}
  x_{i_j}^{(j)}}_{4t}^{4t}
&~=~ 
\ExpOp_{\bfx^{(1)}, \ldots, \bfx^{(d-1)}} ~~\ExpOp_{\bfx^{(d)}} \norm{\sum_{k \in [n]} \inparen{\sum_{\bfi \in \tuple{d}{n}
    \atop i_d = k} \mat{A}_{\bfi} \cdot \prod_{j \in [d-1]}
  x_{i_j}^{(j)}} \cdot x_k^{(d)}}_{4t}^{4t} \\
&~=~
\ExpOp_{\bfx^{(1)}, \ldots, \bfx^{(d-1)}} ~~\ExpOp_{\bfx^{(d)}} \norm{\sum_{k \in [n]}
  \inparen{\partial_{x_k^{(d)}}\mat{F}} \cdot x_k^{(d)}}_{4t}^{4t} \mper
\end{align*}
To bound the inner expectation, we can now use the matrix Rosenthal inequality (\cref{rosenthal})
which says that for a collection of centered, independent random matrices $\inbraces{\mat{Y}_k}$ with
finite moments, we have
\[
    \E \norm{\sum_{k}\bfY_k}_{4t}^{4t} \leq (16t)^{3t} \cdot \left\{ \norm{\left(\sum_k\E  \bfY_k \trans{\bfY}_k \right)^{1/2}}_{4t}^{4t} + \norm{ \left(\sum_k\E \trans{\bfY}_k\bfY_k \right)^{1/2} }_{4t}^{4t} \right\} + (8t)^{4t}\cdot \left(\sum_k \E \norm{\bfY_k}_{4t}^{4t} \right).
\]
Taking $\mat{Y}_k = \partial_{x_k^{(d)}}\mat{F} \cdot x_k^{(d)}$ we can now compute the
expectations (over $\bfx^{(d)}$) in the RHS as
\begin{align*}
\sum_k \E\mat{Y}_k \trans{\mat{Y}}_k &~=~   
\begin{bmatrix}
  \partial_{x_1^{(d)}} \mat{F} & \dots & \partial_{x_n^{(d)}} \mat{F}
\end{bmatrix}
\trans{
\begin{bmatrix}
  \partial_{x_1^{(d)}} \mat{F} & \dots & \partial_{x_n^{(d)}} \mat{F} 
\end{bmatrix}
} \\
\sum_k \E\trans{\mat{Y}}_k\mat{Y}_k  &~=~   
\begin{bmatrix}
  \partial_{x_1^{(d)}} \trans{\mat{F}} & \dots & \partial_{x_n^{(d)}} \trans{\mat{F}}
\end{bmatrix}
\trans{
\begin{bmatrix}
  \partial_{x_1^{(d)}} \trans{\mat{F}} & \dots & \partial_{x_n^{(d)}} \trans{\mat{F}} 
\end{bmatrix}
} \\
\sum_k \E \norm{\bfY_k}_{4t}^{4t} &~=~ 
\E \norm{\begin{bmatrix}
 x_1^{(d)} \cdot \partial_{x_1^{(d)}} \mat{F} & & \\ & \ddots & \\ & & x_n^{(d)} \cdot \partial_{x_n^{(d)}} \mat{F}
\end{bmatrix}}_{4t}^{4t}
~\leq~ L^{4t} \cdot \norm{\begin{bmatrix}
  \partial_{x_1^{(d)}} \mat{F} & & \\ & \ddots & \\ & & \partial_{x_n^{(d)}} \mat{F}
\end{bmatrix}}_{4t}^{4t}
\end{align*}
Iterating the above argument, we can prove a bound in terms of a collection of matrices
$\mat{F}_{a,b,c}$ constructed inductively as
\begin{align*}
 \bfF_{a+1,b,c} &~=~ \trans{\begin{bmatrix}
  \partial_{x_1} \bfF_{a,b,c} & \dots & \partial_{x_n} \bfF_{a,b,c}
\end{bmatrix}} \\
 \bfF_{a,b+1,c} &~=~ \begin{bmatrix}
  \partial_{x_1} \bfF_{a,b,c} & \dots & \partial_{x_n} \bfF_{a,b,c}
\end{bmatrix} \\
 \bfF_{a,b,c+1}&~=~ \begin{bmatrix}
  \partial_{x_1} \bfF_{a,b,c} & & \\ & \ddots & \\ & & \partial_{x_n} \bfF_{a,b,c}
\end{bmatrix} \mper
\end{align*}
Note that in the above definition, we do not specify the order of differentiation, which can be
ignored due to the permutation symmetry of $\mat{F}$ and the fact that changing the order does not
change the Schatten norms (see \cref{sec:prelims} and \cref{sec:multilinear} for details).
This argument yields the following bound for all homogeneous multilinear polynomial functions.
\begin{theorem} [Restatement of \cref{homo multilinear recursion}]
\label{multilinear:intro}
Let $\bfx = \{x_i\}_{i = 1}^n$ be a sequence of i.i.d random variables with $\E x_i= 0$,  $\E x_i^2= 1$ and $|x_i| \leq L$ for all $1\leq i \leq n$. Let $\{\bfA_{\bfi}\}_{\bfi \in \calT^d_n}$ be a multi-indexed sequence of deterministic matrices of the same dimension. Define a permutation symmetric, homogeneous multilinear polynomial random matrix of degree $d$ as
$$
\bfF(\bfx) =  \sum_{\bfi \in \calT^d_n} \left( \bfA_{\bfi} \cdot \prod_{j \in \{i_1,\dots, i_d \}} x_j\right).
$$
Let $a,b,c\in \Z_{\geq 0}$ and $d = a+b+c$. Then for $2 \leq t \leq \infty,$
$$
\E \norm{\bfF(\bfx)}_{4t}^{4t} ~\leq~ \sum_{\substack{a,b,c: \\ a+b+c=d}} (48dt)^{4dt} \cdot L^{4ct} \norm{\bfF_{a,b,c}}_{4t}^{4t}.
$$
\end{theorem}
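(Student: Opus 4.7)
The plan is to combine one application of a decoupling inequality with $d$ iterated applications of the matrix Rosenthal inequality, mirroring the calculation sketched in the introduction.

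First, I would apply the decoupling inequality of \cref{sec:decoupling} to bound
\[
\E \norm{\bfF(\bfx)}_{4t}^{4t} ~\leq~ C_d^{4t} \cdot \E \norm{\bfF(\bfx^{(1)}, \ldots, \bfx^{(d)})}_{4t}^{4t},
\]
where the decoupled polynomial is separately linear in each of the $d$ independent copies of $\bfx$. This is a one-shot step using the full structure of $\bfF$; all subsequent arguments manipulate the decoupled polynomial, which is already in a form amenable to linearization in each block.

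Next, I would set up an induction to linearize one block at a time. The inductive invariant is that, after $m$ such linearizations, the bound is a sum over triples $(a,b,c)$ with $a+b+c = m$ of terms of the form $\binom{m}{a,b,c} \cdot (8t)^{4mt} \cdot L^{4ct} \cdot \E \norm{\bfF_{a,b,c}(\bfx^{(1)}, \ldots, \bfx^{(d-m)})}_{4t}^{4t}$, where $\bfF_{a,b,c}$ is built from $\bfF$ by the block constructions in the theorem statement. For the inductive step, I would fix $\bfx^{(1)}, \ldots, \bfx^{(d-m-1)}$ and, using the fact that $\bfF_{a,b,c}$ is linear in $\bfx^{(d-m)}$, write it as $\sum_k \bfY_k$ with $\bfY_k = \bigl(\partial_{x_k^{(d-m)}} \bfF_{a,b,c}\bigr) \cdot x_k^{(d-m)}$. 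These matrices are independent and mean-zero after conditioning, so the matrix Rosenthal inequality (\cref{rosenthal}) applies. Computing $\sum_k \E \bfY_k \bfY_k^\T$ and $\sum_k \E \bfY_k^\T \bfY_k$ and invoking the identity $\norm{(BB^\T)^{1/2}}_{4t}^{4t} = \norm{B}_{4t}^{4t}$ identifies the first two Rosenthal terms with $\norm{\bfF_{a,b+1,c}}_{4t}^{4t}$ and $\norm{\bfF_{a+1,b,c}}_{4t}^{4t}$, while $\sum_k \E \norm{\bfY_k}_{4t}^{4t}$ is controlled using $|x_k^{(d-m)}| \leq L$ and the block-diagonal decomposition of the Schatten norm by $L^{4t} \cdot \norm{\bfF_{a,b,c+1}}_{4t}^{4t}$. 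The Rosenthal prefactor $(16t)^{3t}$ on the first two terms is bounded by $(8t)^{4t}$ for $t \geq 1$, so each step uniformly contributes a factor of $(8t)^{4t}$ and a branching into three sub-problems.

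After $d$ such iterations, all random variables are integrated out. The main technical obstacle is the bookkeeping of constants, and combining paths that yield the same $\bfF_{a,b,c}$. The multinomial multiplicity of a given triple $(a,b,c)$ is $\binom{d}{a,b,c} \leq 3^d \leq 3^{4dt}$, the iterated Rosenthal factor is $(8t)^{4dt}$, and the decoupling constant is $C_d^{4t} \leq (2d)^{4dt}$, so the overall prefactor for each $(a,b,c)$ is at most
\[
(2d)^{4dt} \cdot 3^{4dt} \cdot (8t)^{4dt} ~=~ (48 dt)^{4dt},
\]
matching the stated bound. A secondary subtlety is that the matrices $\bfF_{a,b,c}$ must be well-defined independent of the order in which partial derivatives and reshapings are applied, which follows from the permutation symmetry of $\{\bfA_\bfi\}$ together with the invariance of the Schatten norm under the relevant reshape operations; this is the content of \cref{sec:prelims} and \cref{sec:multilinear}, and is what licenses the collapse of distinct paths in the recursion tree to the same $\bfF_{a,b,c}$.
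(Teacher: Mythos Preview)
Your proposal is correct and follows essentially the same route as the paper: one decoupling step (\cref{decoupling}) followed by $d$ iterated applications of the matrix Rosenthal inequality (\cref{rosenthal}, packaged as \cref{recurse rosenthal}), with permutation symmetry used to collapse the recursion tree into terms indexed by $(a,b,c)$. The only cosmetic difference is in the constant bookkeeping: the paper bounds each Rosenthal step by $(16t)^{4t}$ and uses $d^{4dt}$ for decoupling, whereas you bound each step by $(8t)^{4t}$ and use $(2d)^{4dt}$; both routes land on $(48dt)^{4dt}$.
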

We remark that the condition that $\abs{x_i} \leq L$ can be replaced by a condition on the growth of
moments of $x_i$, which is true for subgaussian variables.
The above bound is in terms of the norms of a constant number of \emph{deterministic} matrices
$\mat{F}_{a,b,c}$, as is also the case for general moment bounds on scalar polynomials~\cite{AW15}. 
We will see later that these deterministic matrices can easily be interpreted in several cases of interest, to
recover the bounds obtained via combinatorial methods.

\paragraph{Gaussian polynomial matrices.} While the above bounds are only for multilinear
polynomials, they can also be extended to \emph{arbitrary} polynomials of independent Gaussian
random variables, using standard techniques to approximate them by multilinear polynomials. 
However, for the case of Gaussian polynomials, the recent \Poincare inequalities of Huang and
Tropp~\cite{HT20} directly yield a simple bound, which is easier to apply. They show that for a
Hermitian matrix-valued function $\mat{H}$ of Gaussian valued random variables
\[
 \E \norm{\bfH(\bfx) - \E \bfH(\bfx)}_{2t}^{2t} ~\leq~ (\sqrt{2}t)^{2t} \cdot \E \tr \left(\sum_{i=1}^n (\partial_i \bfH(\bfx) )^2 \right)^t.
\]
Given a (not necessarily Hermitian) matrix-valued degree-$d$ polynomial function $\mat{F}(\bfx) \in
\R^{d_1 \times d_2}$, we can apply the above bound to the ``Hermitian dilation'' $\mat{H}
= \begin{bmatrix} \bf 0 & \mat{F} \\ \trans{\mat{F}} & \bf 0 \end{bmatrix}$, to get
\[
2\norm{\mat{F} - \E\mat{F}}_{2t}^{2t}
~\leq~
(\sqrt{2}t)^{2t} \cdot \norm{\left(\sum_{i=1}^n \partial_i \bfF ~ \partial_i \trans{\bfF}
  \right)^{1/2}}^{2t}_{2t} + (\sqrt{2}t)^{2t} \cdot \norm{\left(\sum_{i=1}^n \partial_i \trans{\bfF} ~ \partial_i \bfF \right)^{1/2}}_{2t}^{2t}
\]
As before, one can apply this argument inductively to obtain a bound in terms of the matrices
$\mat{F}_{a,b}$ which are defined similarly as the matrices $\mat{F}_{a,b,c}$ before, with $c = 0$.
Since we no longer have $\E \mat{F}_{a,b} = \mat{0}$ for $a+b < d$, the bound is in terms of the
expected matrices $\E\mat{F}_{a,b}$ for all $a,b$ with $a+b \leq d$.
\begin{theorem}[Restatement of \cref{gaussian recursion}] 
 Let $\bfx \sim \gauss{0}{\mathbb{I}_n}$ and $\{\bfA_{\bfi}\}_{\bfi \in [n]^d}$ be a sequence of deterministic matrices of the same dimension. Define a degree-$d$ homogeneous Gaussian polynomial random matrix as
$$
\bfF(\bfx) =  \sum_{\bfi \in [n]^d} \left( \bfA_{\bfi} \cdot \prod_{j \in \{i_1,\dots, i_d \}} x_j\right).
$$
Let $a,b \in \Z_{\geq 0}$. Then for $2 \leq t \leq \infty,$
$$
\E \norm{\bfF(\bfx) - \E \bfF(\bfx)}_{2t}^{2t} ~\leq~ (2^d \sqrt{2}t)^{2t} \left( \sum_{1 \leq a+b
    \leq d}\norm{\E \bfF_{a,b}}_{2t}^{2t} \right).
$$   
\end{theorem}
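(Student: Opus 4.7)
The plan is to iteratively apply the Hermitian-dilation form of the Huang--Tropp Poincar\'e inequality displayed in the excerpt, at each step passing from a polynomial matrix of degree $d-a-b$ to the two matrices $\bfF_{a+1,b}$ and $\bfF_{a,b+1}$ built by block-stacking its first partial derivatives. The recursion terminates when $a+b = d$: since $\bfF$ is homogeneous of degree $d$, all $d$-th order partial derivatives are constant in $\bfx$, so $\bfF_{a,b} = \E \bfF_{a,b}$ and no further expansion is needed. This makes the leaves of the recursion tree exactly the deterministic norms $\norm{\E \bfF_{a',b'}}_{2t}^{2t}$ indexed by $(a',b')$ with $1 \le a'+b' \le d$.

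Concretely, for each $(a,b)$ with $a+b < d$, I would first split using the triangle inequality for the Schatten norm,
\[
\E \norm{\bfF_{a,b}}_{2t}^{2t} ~\le~ 2^{2t-1} \Big( \norm{\E \bfF_{a,b}}_{2t}^{2t} + \E \norm{\bfF_{a,b} - \E \bfF_{a,b}}_{2t}^{2t} \Big),
\]
peeling off a deterministic leaf $\norm{\E \bfF_{a,b}}_{2t}^{2t}$. Then I would apply the Huang--Tropp inequality to the centered piece via its Hermitian dilation, identifying the two derivative-sum Schatten norms appearing on the right-hand side with $\norm{\bfF_{a,b+1}}_{2t}^{2t}$ and $\norm{\bfF_{a+1,b}}_{2t}^{2t}$ using the block-matrix identity $\bfF_{a,b+1} \trans{\bfF_{a,b+1}} = \sum_{i=1}^n \partial_i \bfF_{a,b} \cdot \partial_i \trans{\bfF_{a,b}}$, which follows directly from the definition of $\bfF_{a,b+1}$ as a horizontal block-stack, together with the analogous one matching $\bfF_{a+1,b}$ to the second derivative sum. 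Unfolding this binary recursion to depth $d$ gives a sum over the leaves $\norm{\E \bfF_{a',b'}}_{2t}^{2t}$ multiplied by a product of per-step constants.

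The main obstacle will be the careful bookkeeping of constants to arrive at the clean prefactor $(2^d \sqrt{2}\,t)^{2t}$, whose $t$-dependence $t^{2t}$ is \emph{independent} of $d$. A naive depth-$d$ iteration that re-invokes Poincar\'e at every level would produce a compounding $t$-dependence of the form $t^{2dt}$, which is far weaker; the $d$-independence of the $t$-exponent in the statement suggests that the Huang--Tropp factor $(\sqrt{2}\,t)^{2t}$ should be extracted only once at the top of the recursion, and the remaining $d-1$ levels handled purely by Schatten-$t$ triangle-inequality decompositions of the PSD polynomial matrix $\sum_i (\partial_i \bfH(\bfx))^2$ (each contributing only a $2^{2t}$-type factor, ultimately producing the $(2^d)^{2t}$ base of the prefactor). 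A secondary, more routine task is to verify that the block-matrix arithmetic correctly reproduces the matrices $\bfF_{a,b}$ of the excerpt, up to transpositions that preserve Schatten norms, and to check that the total count of leaves fits into the single sum $\sum_{1 \le a+b \le d} \norm{\E \bfF_{a,b}}_{2t}^{2t}$ without extra combinatorial blow-up.
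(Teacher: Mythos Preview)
Your core recursive scheme is exactly what the paper does: it packages one iteration as a separate claim (apply the non-Hermitian Huang--Tropp moment bound to $\bfF_{a,b}-\E\bfF_{a,b}$, identify the two derivative terms with $\E\norm{\bfF_{a+1,b}}_{2t}^{2t}$ and $\E\norm{\bfF_{a,b+1}}_{2t}^{2t}$ via the block-matrix identities you wrote down, then use the trace/triangle inequality to peel off the leaf $\norm{\E\bfF_{a',b'}}_{2t}^{2t}$ from the centered part), and then simply invokes that claim for $k=0,1,\ldots,d-1$.

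Your worry about the constant is legitimate, and the paper does \emph{not} resolve it: it applies Poincar\'e at every one of the $d$ levels, so its own recursion yields a prefactor of order $(2\sqrt{2}\,t)^{2dt}$, with $t$-exponent $2dt$ rather than the $2t$ appearing in the stated bound $(2^d\sqrt{2}\,t)^{2t}$. The stated constant thus appears to be a slip; the argument as written delivers the weaker $t^{2dt}$ dependence you anticipated from the ``naive depth-$d$ iteration.'' Your proposed workaround of invoking Poincar\'e only once and handling the remaining $d-1$ levels by triangle-inequality manipulations of $\sum_i(\partial_i\bfH)^2$ would not work, since each degree reduction genuinely requires passing from a function to its partials, and the only tool available here that does so with a moment bound is the Poincar\'e step itself. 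So: follow the paper's recursion and accept the $(C\,t)^{2dt}$-type constant; do not spend effort trying to reach the sharper $t^{2t}$ form, as the given proof does not achieve it either.
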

While the above theorem is stated for homogeneous polynomials, the proof is identical also for the
non-homogeneous ones, with the sum in the definition being over all tuple sizes.

\subsection*{Do try this at home: applying the framework}
We now discuss how to interpret the matrices $\mat{F}_{a,b,c}$ arising in the bound in
\cref{multilinear:intro} to recover the norm bounds for graph matrices derived via combinatorial
methods. This is discussed with more formal details in \cref{sec:applications}, but we present an
intuitive (at least for the authors) version of the argument here. 

Graph matrices are defined using constant-sized template ``shapes'' and provide a convenient basis
for expressing (large) random matrices where the the entries are low-degree polynomials in the
(normalized) indicators of edges in a $G_{n,p}$ random graph. Such matrices and their norm bounds
have been used for a large number of applications~\cite{AMP21}.

Let $N = \binom{n}{2}$ and fix a canonical bijection between the space $[N]$ and $\binom{[n]}{2}$,
and let $[N]$ index the space of all possible edges in a random graph on $n$ vertices.
For each $e = \{i,j\} \in \binom{[n]}{2}$, for $\Delta_p = \sqrt{(1-p)/p}$ let $x_{e}$ be independently
$1/\Delta_p$ with probability $(1-p)$ and $-\Delta_p$ with probability $p$, so that $\E x_e = 0$,
$\E x_e^2 = 1$, and $\abs{x_e} \leq \Delta_p$.

Let $(V(\tau), E(\tau))$ be a graph on $k$ vertices with $d$ edges, and let $U_{\tau}, V_{\tau}$ be
\emph{ordered} subsets of $V(\tau)$ of size $k_1$ and $k_2$ respectively. 
The tuple $\tau = (V(\tau), E(\tau), U_{\tau}, V_{\tau})$ is referred to as a ``shape'' and is used
to define the following graph matrix $\mat{M}_{\tau}$ with rows and columns indexed by
$\tuple{k_1}{n}$ and $\tuple{k_2}{n}$ respectively
\begin{align*}
\mat{M}_{\tau}[\bfi, \bfj] ~:=~ \sum_{\psi \in \tuple{k}{n}} \indicator{\psi(U_{\tau}) = \bfi} \cdot
\indicator{\psi(V_{\tau}) = \bfj} \cdot \prod_{e_0 \in E(\tau)} x_{\psi(e_0)} \mcom
\end{align*}
where we interpret a tuple $\psi \in \tuple{k}{n}$ as an injective function $\psi: V(\tau) \to [n]$
in the canonical way. 
If the function $\psi$ maps $U_{\tau}$ to $\bfi$ (the row-index) and $V_{\tau}$ to $\bfj$ (the column
index), we add the monomial to $\mat{M}_{\tau}[\bfi, \bfj]$ corresponding to the product of the
images of all edges in the ``pattern'' $E(\tau)$ under $\psi$.
Note that each nonzero entry of the matrix-valued function $\mat{M}$ is a
multilinear homogeneous polynomial in the variables $\inbraces{x_e}_{e \in \binom{[n]}{2}}$ with
degree $d = \abs{E(\tau)}$. 
We denote $\mat{M}_{\tau}$ by $\mat{F}$ for ease of notation in the discussion below.

\begin{figure}[!h]
	\centering
	\includegraphics[scale=0.25]{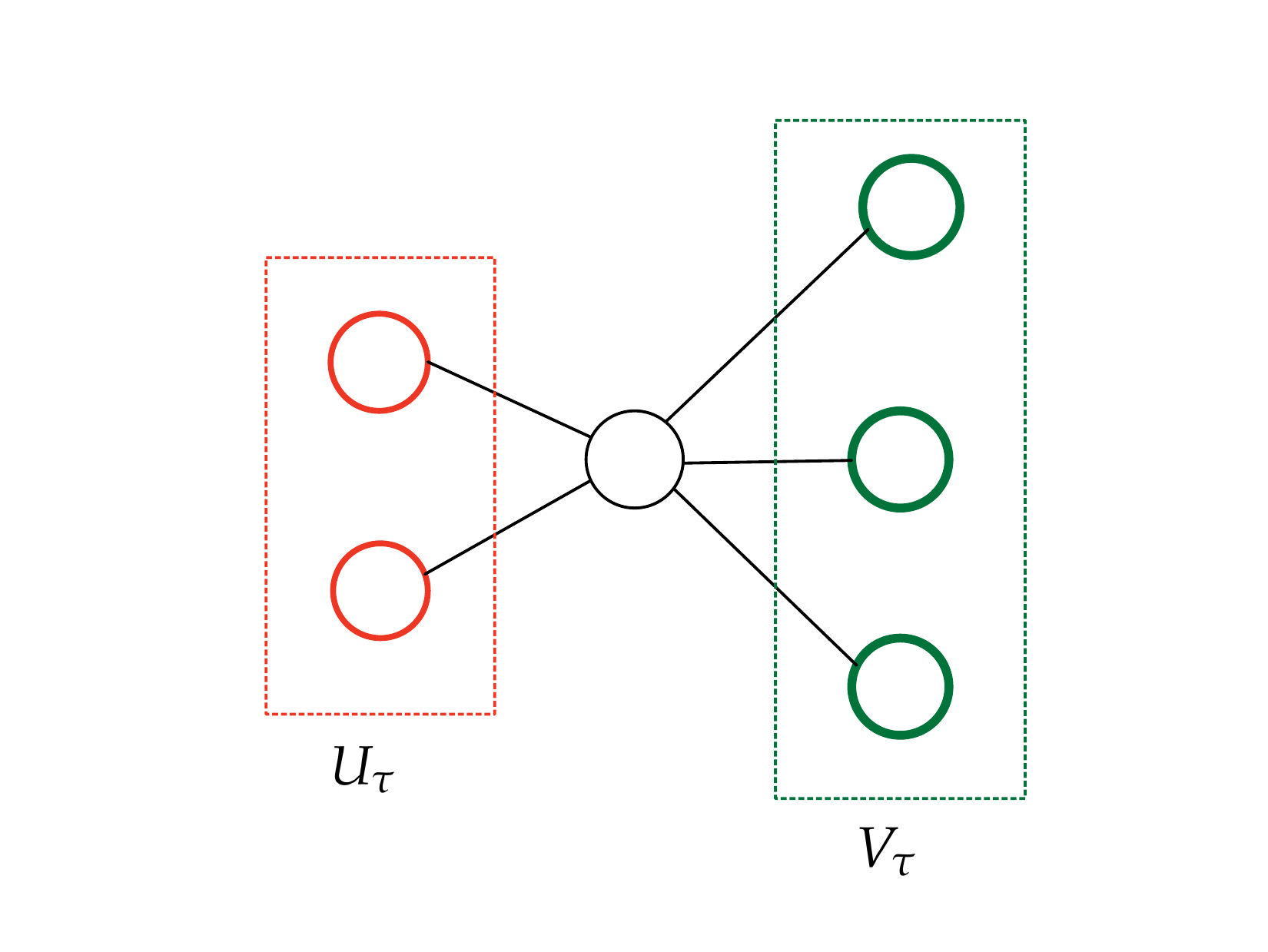}
	\caption{A shape $\tau$}
	\label{fig:shape}
\end{figure}

As will be apparent from the analysis below, the norm bounds for such matrices behave different
characterizations in the ``dense'' regime where $p = \Omega(1)$ and the ``sparse'' one where $p =
o(1)$. This is because the subgaussian norm $L = \Delta_p \approx p^{-1/2}$ is bounded in the first
case, and growing in the second case.
The bounds for the first case are stated in terms of the size the minimum vertex separator
separating $U_{\tau}$ from $V_{\tau}$ in the shape $\tau$. 
For the second case, one needs to augment the defintion of minimum vertex separator, to use a cost
based on the subgaussian norm $\Delta_p$.
We will show how to recover both these characterizations, using the matrices $\mat{F}_{a, b,
  c}$ given by \cref{multilinear:intro}.

\paragraph{Dense graph matrices.} We start by considering $p = 1/2$ so that the random variables
$x_e$ are just independent Rademacher variables. Taking $d$ to be constant, the bound in
\cref{multilinear:intro} involves only a constant number of matrices, and will depend on the
dominant term. 
To understand each of the terms, we first consider the matrices derived after one step of the
recursion. 
\begin{align*}
 \bfF_{1,0,0} ~=~ \trans{\begin{bmatrix}
  \partial_{x_{1}} \bfF & \dots & \partial_{x_{N}} \bfF
\end{bmatrix}},~~ 
 \bfF_{0,1,0} ~=~ \begin{bmatrix}
  \partial_{x_{1}} \bfF & \dots & \partial_{x_{N}} \bfF
\end{bmatrix} ~~\text{and}~~
 \bfF_{0,0,1} ~=~ \begin{bmatrix}
  \partial_{x_{1}} \bfF & & \\ & \ddots & \\ & & \partial_{x_{N}} \bfF
\end{bmatrix} \mper
\end{align*}
Since $\tr{\inparen{\sum_i \bfA_i \trans{\bfA}_i}^{2t}} + \tr{\inparen{\sum_i \trans{\bfA}_i
    \bfA_i}^{2t}} ~\geq~ \sum_i \tr\inparen{\bfA_i\trans{\bfA}_i}^{2t}$ and $L = 1$, we will have
that $\norm{\bfF_{1,0,0}}_{4t}^{4t} + \norm{\bfF_{0,1,0}}_{4t}^{4t} \geq L^{4t} \cdot
\norm{\bfF_{0,0,1}}_{4t}^{4t}$. 
This will also be true (up to constant factors) for any constant $L$, since we interested in the
$4t$-th root of the above trace. Using a similar argument, we can ignore terms with $c > 0$ for now
(we will come back to these in the sparse case) and focus on matrices $\mat{F}_{a,b,0}$.

We now interpret the matrices $\mat{F}_{1,0,0}$ and $\mat{F}_{0,1,0}$. The row-space of
$\mat{F}_{1,0,0}$ is now indexed by $(\bfi, e)$ for $e \in \binom{[n]}{2}$ (respectively $(\bfj, e)$
for the column space of $\mat{F}_{0,1,0}$). 
We now include terms corresponding to maps $\psi$ for which $\psi(U_{\tau}) = \bfi$, $\psi(V_{\tau})
= \bfj$ and $\psi(e_0) = e$ for some $e_0 \in E(\tau)$ so that $\partial x_e \mat{F}[\bfi, \bfj]
\neq 0$. 

Decomposing $\mat{F}_{1,0,0}$ into $\abs{E(\tau)}$ matrices (one for each choice of $e_0$), we can
consider the $\mat{F}_{1,0,0, e_0}$ as simply a new graph matrix we have \emph{deleted} the edge $e_0$ and
included both its end points in $U_{\tau}$ (respectively in $V_{\tau}$ for $\mat{F}_{0,1,0}$) to
obtain a new shape $\tau'$. 
Similarly, each $\mat{F}_{a,b,0}$ can be decomposed into $\abs{E(\tau)}^{a+b}$ graph matrices, where
we delete $a+b$ edges in total, and include the endpoints in $U_{\tau}$ for $a$ of them and
$V_{\tau}$ for $b$ of them.
This is illustrated in \cref{fig:evolution} where we color the edges red or green depending on their
inclusion in $U_{\tau}$ or $V_{\tau}$. 

\begin{figure}[!h]
	\centering 
	\includegraphics[scale=0.3]{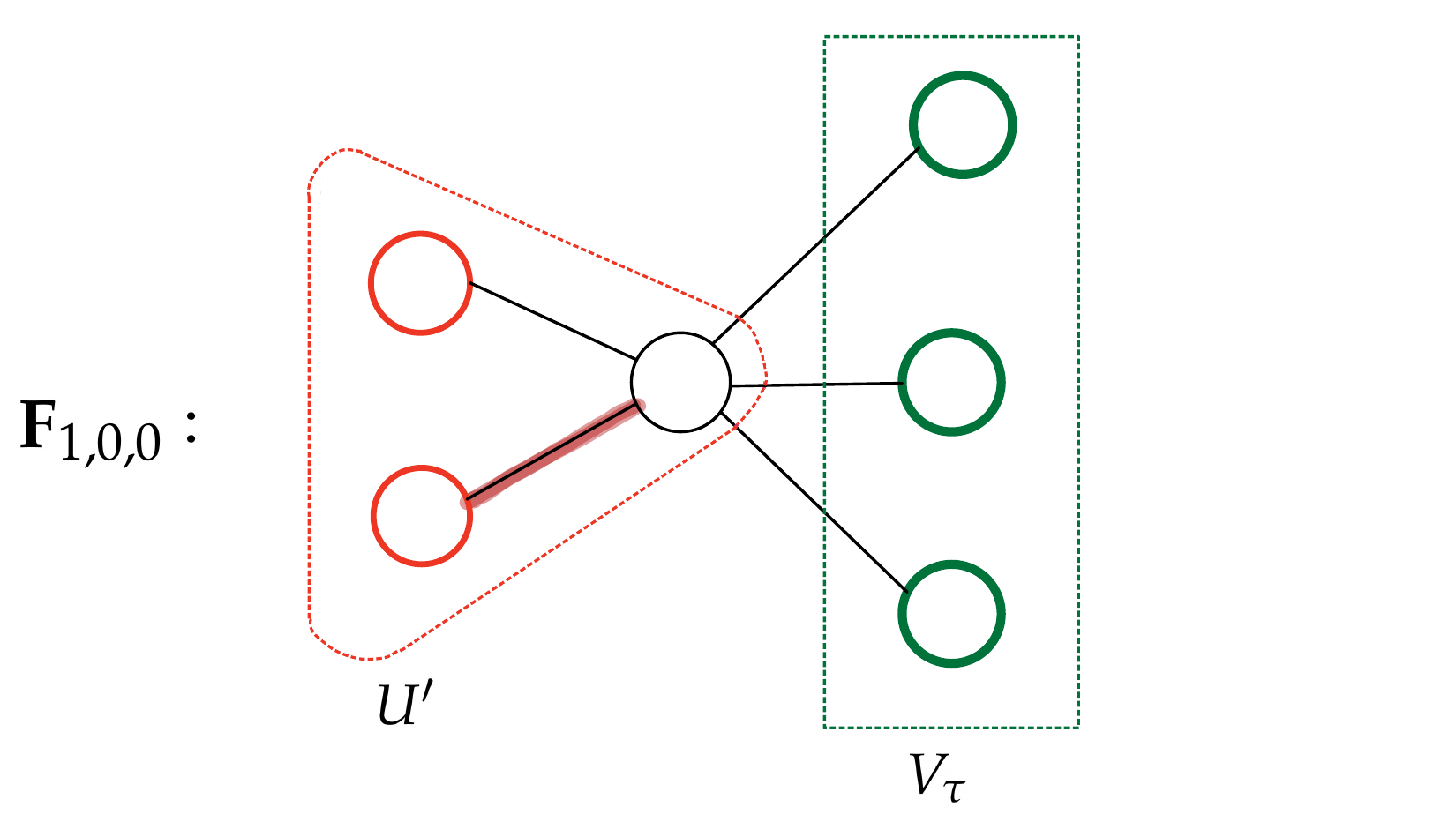}
	\includegraphics[scale=0.28]{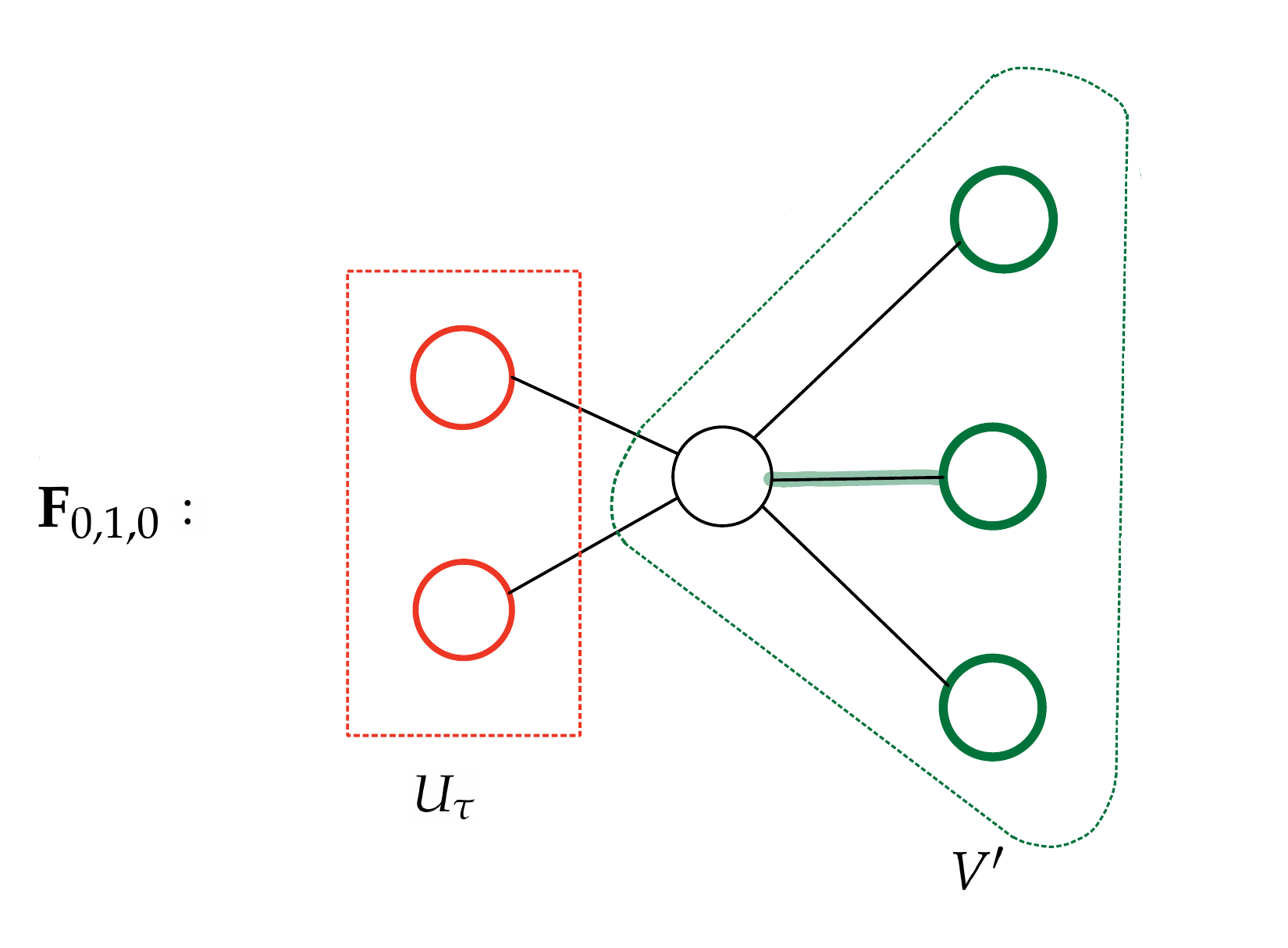}
	\includegraphics[scale=0.32]{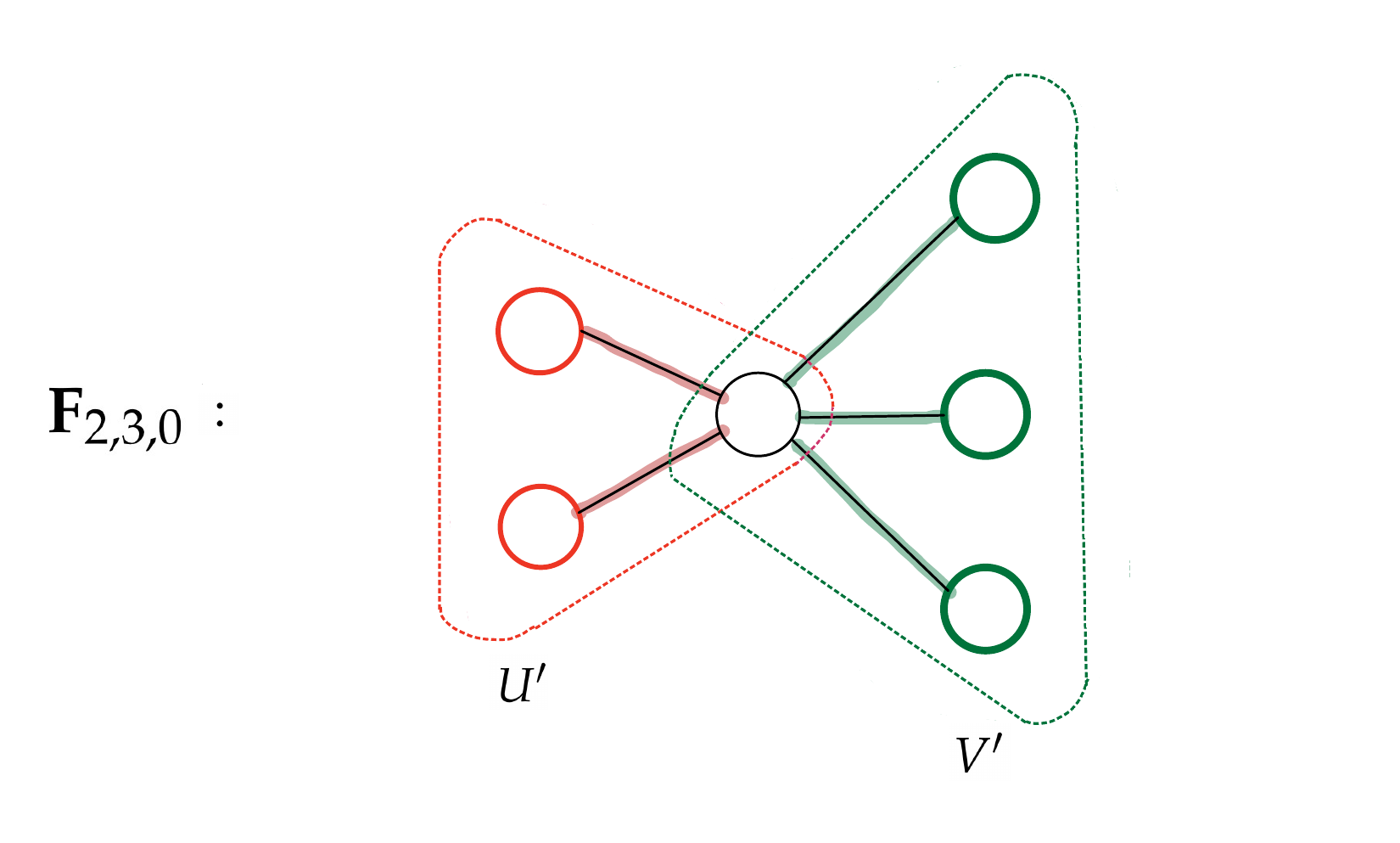}
	\caption{$\mat{F}_{1,0,0}$, $\mat{F}_{0,1,0}$ and $\mat{F}_{2,3,0}$}
	\label{fig:evolution}
\end{figure}

The bound in \cref{multilinear:intro} is then in terms of these new graph matrices where we delete
all $d$ edges, and split their endpoints between $U_{\tau}$ or $V_{\tau}$ to obtain $U', V'
\subseteq V(\tau) = [k]$. For an entry $\mat{M}[\bfi', \bfj']$ of any such matrix, 
there is at most one $\psi: V(\tau) \to [n]$ such
that ${\psi(U') = \bfi'}$ and $\psi (V') = \bfj'$ (assuming $\tau$ has no isolated vertices), and 
$\mat{M}[\bfi', \bfj'] = \indicator{\psi(U') = \bfi'} \cdot \indicator{\psi (V') = \bfj'}$. Taking $S =
U' \cap V'$, this matrix can be simply written as a block-diagonal matrix with $n^{\abs{S}}$ of size
$n^{\abs{U' \setminus S}} \times n^{\abs{V' \setminus S}}$. 
It is easy to check that the spectral norm of such a matrix is $n^{(\abs{U' \setminus S} + \abs{V'
    \setminus S})/2} = n^{(k - |S|)/2}$ since $\abs{U' \cup V'} = k$ and $S = U' \cap V'$.

Thus, each of the terms in the bound will correspond to graph matrices with different $U'$ and $V'$
(obtained by coloring edges red or green according to the above process) and the dominant term will
be the one with the smallest value of $\abs{U' \cap V'}$. We now claim that $U' \cap V'$ must be a
vertex separator separating $U_{\tau}$ and $V_{\tau}$ in the shape $\tau$ \ie any path between them
must pass through $U' \cap V'$. 
If the path contains both red and green edges, then it contains one vertex with both red and green
edges incident on it, which is then in $U' \cap V'$ (since $U'$ is obtain by adding endpoint of
red edges to $U_{\tau}$ and $V'$ similarly for green edges). If path is entirely red, then we have
an endpoint of a red edge in $V_{\tau}$ which is a vertex in $U' \cap V'$, and similarly for green paths.

Thus, we have $n^{(k - \abs{S})/2} \leq n^{(k-r)/2}$, where $r$ is the size of the minimum vertex
separator between $U_\tau$ and $V_{\tau}$, which recovers the known bound for dense graph
matrices~\cite{AMP21}.

\paragraph{Sparse graph matrices.} The argument for sparse graph matrices is almost the same as
above except now we also need to consider terms of the form $\mat{F}_{a,b,c}$ for $c > 0$. Just as
we interpreted $\mat{F}_{a,b,0}$ as coloring $a$ edges from $E(\tau)$ as red and including in $U'
\supseteq U_{\tau}$ and $b$ green edges in $V' \supseteq V_{\tau}$, we can now interpret
$\mat{F}_{a,b,c}$ as having $c$ edges whose endpoints are included in \emph{both} $U'$ and $V'$ (say
these edges are colored yellow). This simply reflects the fact that the derivatives in the
definition of $\mat{F}_{a,b,c+1}$ are placed as diagonal blocks.

\begin{figure}[!h]
	\centering
	\includegraphics[scale=0.3]{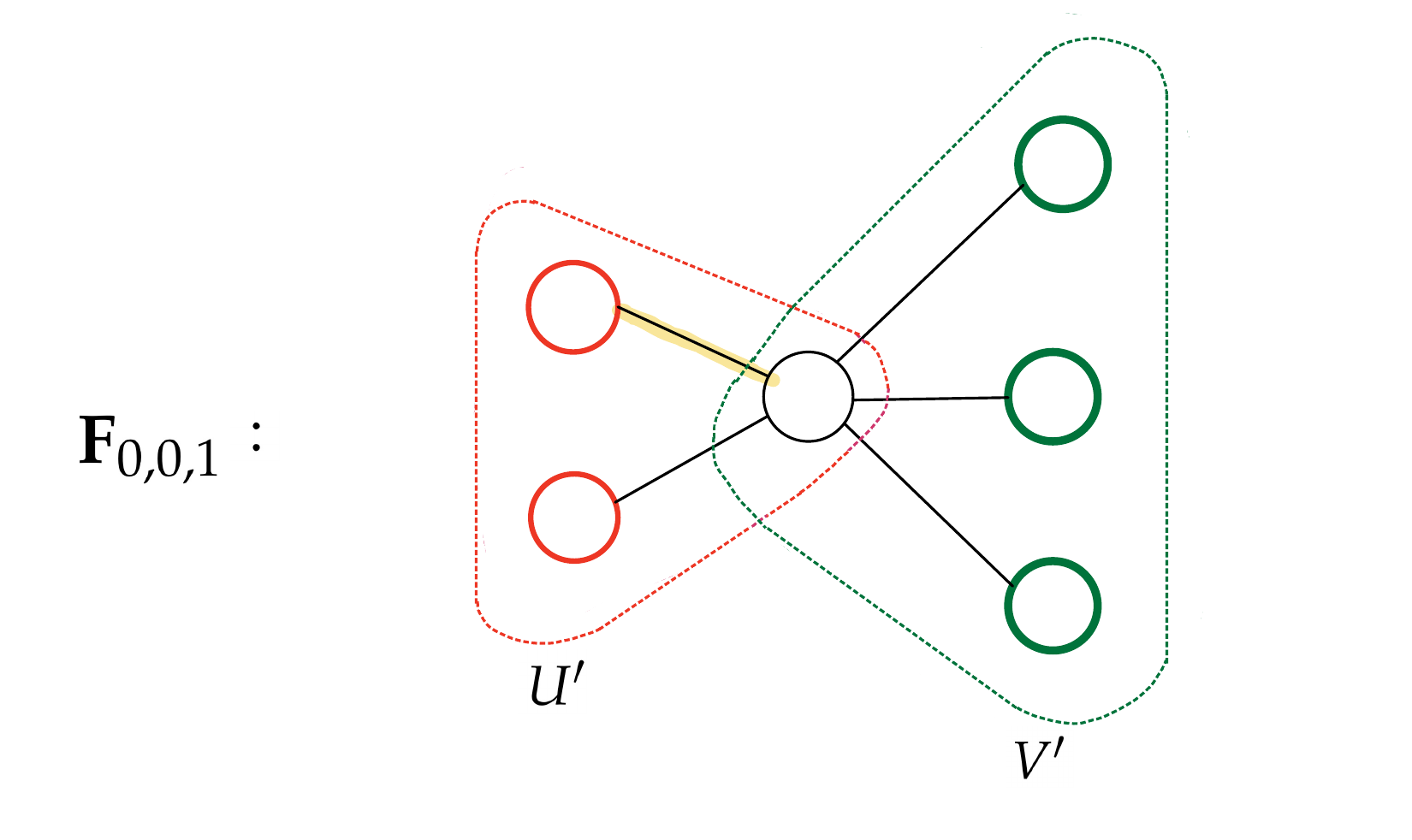}
	\caption{$\mat{F}_{0,0,1}$}
	\label{fig:sparse-evol}
\end{figure}

As we saw above, increasing the intersection of $U'$ and $V'$ decreases the norm, and so the
matrices with $c > 0$ should have a smaller Schatten norm. However, they are now included in the
bound with a multiplicative factor of $L^c$. Thus, we simply look for a vertex separator $S$ maximizing
$L^{e(S)} \cdot n^{(k - \abs{S})/2}$ where $e(S)$ counts the (yellow) edges contained in $S = U'
\cap V'$.  This is precisely the ``sparse vertex separator'' which determines the norm bound for
such sparse graph matrices~\cite{JPRTX21, RT23}.

\section{Preliminaries and Notation}\label{sec:prelims}

Vectors are denoted by bold face lower case letters $\bfx, \bfy$ and $\bfz$. Deterministic matrices are denoted by bold face upper case letters $\bfA$ and $\bfB$. Matrix-valued functions are denoted by bold face upper case letters $\bfX$, $\bfY$, $\bfF$, $\bfM$, $\bfH$ and $\bfP$. 

\vspace{-10 pt}

\paragraph{Sets and indices.} 
For a set $S$, let $|S|$ denote the number of distinct elements in $S$. We denote $[n] = \{1,\dots, n\}$ for any positive integer $n$. For $\bfi := (i_1, \dots, i_m) \in [n]^m$, define
$$
\calT^m_n := \{(i_1, \dots, i_m): \forall j,k \in [m], ~ j \neq k \implies i_j \neq i_k\}. 
$$

\vspace{-10 pt}

\paragraph{Matrix norms.} 
Let $\R ^{d_1 \times d_2}$ be the space of all $d_1 \times d_2$ real matrices. Let $\mathbb{H}^d$ be the subspace of $ \R ^{d \times d}$ containing all Hermitian matrices. We write $\norm{\cdot}_2$ for the $\ell_2$ operator norm, $\norm{\cdot}_F$ for the Frobenius norm, and $\tr(\cdot)$ for the trace. 

\begin{definition} [Schatten norm] 
For $\bfA \in \R^{d_1 \times d_2}$ and $t \geq 1$, the Schatten $2t$-norm is defined as
$$
\norm{\bfA}_{2t} := (\tr (\trans{\bfA}\bfA)^t)^{1/2t}.
$$
\end{definition}

\vspace{-10 pt}

\paragraph{Polynomial random matrices.} 
Let $(\Omega, \cal F, \mu)$ be a probability space. Introduce the random vector $\bfx := (x_1, \dots, x_n) \in \Omega$ where $x_1, \dots, x_n$ are independent random variables. A polynomial random matrix ${\bf F}(\bfx)$ is a matrix-valued polynomial $${\bf F}: \Omega \rightarrow \R ^{d_1 \times d_2}.$$ We can construct a polynomial random matrix ${\bf F}(\bfx)$ by drawing a copy of $\bfx \sim \mu$. Note that despite $\bfx$ having independent entries, the entries of ${\bf F}(\bfx)$ can be dependent (or correlated) with each other. 

\begin{example}[Rademacher chaos of order 2] \label{example} We give an example of a degree-2 polynomial random matrices in 3 variables. Let $x_1, x_2, x_3$ be i.i.d. Rademacher random variables and $\{\bfB_\bfi\}_{\bfi\in \calT^2_3}$ be a double sequence of deterministic matrices (explicitly chosen here) of the same dimension.
\begin{equation} \label{not symmetric}
\sum_{\bfi \in \calT^2_3} \bfB_\bfi \cdot x_ix_j = x_1x_2\begin{bmatrix} 1 & 0 \\ 0 & 0 \end{bmatrix} + x_1x_3\begin{bmatrix} 0 & 0 \\ 0 & 1 \end{bmatrix} + x_2x_3\begin{bmatrix} 0 & 1 \\ 1 & 0 \end{bmatrix} = \begin{bmatrix} x_1x_2 & x_2x_3 \\ x_2x_3 & x_1x_3 \end{bmatrix}.   
\end{equation}
\end{example}

\begin{remark}
Unlike Wigner matrices whose entries are i.i.d., the polynomial random matrices can have dependencies among its entries.
\end{remark}

\begin{definition}[Permutation Symmetric Property]
Suppose $\bfF(\bfx)$ is a degree-d homogeneous multilinear polynomial random matrix, \ie
$$
\bfF(\bfx) =  \sum_{\bfi \in \calT^d_n} \left( \bfA_{\bfi} \cdot \prod_{j \in \{i_1,\dots, i_d \}} x_j\right),
$$
where $\{\bfA_{\bfi}\}_{\bfi \in \calT^d_n}$ is a multi-indexed sequence of deterministic matrices of the same dimension. $\bfF(\bfx)$ is permutation symmetric if $\bfA_{i_1, \dots, i_d} = \bfA_{i_{\sigma(1)}, \dots, i_{\sigma(d)}}$ for any permutation $\sigma \in \mathfrak{S}_d$ and any $(i_1, \dots, i_d) \in \calT^d_n$.
\end{definition}
\begin{remark}
The polynomial random matrix (\ref{not symmetric}) in \cref{example} is permutation symmetric if we formally rewrite it as
\begin{align*}
    \sum_{\bfi \in \calT^2_3} \bfB_\bfi \cdot x_ix_j = &  x_1x_2\begin{bmatrix} \frac{1}{2} & 0 \\ 0 & 0 \end{bmatrix} +  x_2x_1\begin{bmatrix} \frac{1}{2} & 0 \\ 0 & 0 \end{bmatrix} + x_1x_3\begin{bmatrix} 0 & 0 \\ 0 & \frac{1}{2} \end{bmatrix} + x_3x_1\begin{bmatrix} 0 & 0 \\ 0 & \frac{1}{2} \end{bmatrix} + x_2x_3\begin{bmatrix} 0 & \frac{1}{2} \\ \frac{1}{2} & 0 \end{bmatrix} + 
    x_3x_2\begin{bmatrix} 0 & \frac{1}{2} \\ \frac{1}{2} & 0 \end{bmatrix}. 
\end{align*}
The purpose of this technical detail will become clear in \cref{quadratic} and \cref{homo multilinear recursion}.
\end{remark}

\vspace{-10 pt}

\paragraph{Partial derivatives.} \label{partial} Let $\bfF(\bfx)$ be an $n$-variate polynomial random matrix of degree $D$. For $a,b,c \in \Z_{\geq 0}$ and $d = a + b + c$, we consider the block matrices $\bfF_{a,b,c}$ containing all $d$-th order partial derivatives of $ \bfF(\bfx)$. We define $\bfF_{a,b,c}$ recursively as $\bfF_{0,0,0} = \bfF(\bfx)$ and
$$
 \bfF_{a+1,b,c} = \trans{\begin{bmatrix}
  \partial_{x_1} \bfF_{a,b,c} & \dots & \partial_{x_n} \bfF_{a,b,c}
\end{bmatrix}},
$$
$$
 \bfF_{a,b+1,c} = \begin{bmatrix}
  \partial_{x_1} \bfF_{a,b,c} & \dots & \partial_{x_n} \bfF_{a,b,c}
\end{bmatrix},
$$
and
$$
 \bfF_{a,b,c+1} = \begin{bmatrix}
  \partial_{x_1} \bfF_{a,b,c} & & \\ & \ddots & \\ & & \partial_{x_n} \bfF_{a,b,c}
\end{bmatrix}.
$$
It is evident from the definition that the block matrices $ \bfF_{a+1,b,c} $, $\bfF_{a,b+1,c} $, $\bfF_{a,b,c+1}$ are assembled from sub-blocks $\{ \partial_{x_i} \bfF_{a,b,c} \}_{i=1}^n$ arranged vertically, horizontally and  diagonally respectively. The order of increment between $a$ and $b$ doesn’t affect the resulting matrix, \ie 
$$\bfF_{a+1,b+1,c} = \begin{bmatrix}
  \partial_{x_1} \bfF_{a,b+1,c} \\ \vdots \\ \partial_{x_n} \bfF_{a,b+1,c}
\end{bmatrix} = \begin{bmatrix}
  \partial_{x_1} \bfF_{a+1,b,c} & \dots & \partial_{x_n} \bfF_{a+1,b,c}
\end{bmatrix} = \begin{bmatrix}
  \partial_{x_1}\partial_{x_1} \bfF_{a,b,c} & \dots & \partial_{x_1}\partial_{x_n} \bfF_{a,b,c} \\ & \vdots & \\  \partial_{x_n}\partial_{x_1} \bfF_{a,b,c} & \dots & \partial_{x_n}\partial_{x_n} \bfF_{a,b,c}
\end{bmatrix}. $$
However, the order of increment involving $c$ affects the resulting matrix, \ie 
$$
\begin{bmatrix}
  \partial_{x_1} \bfF_{a,b,c+1} & \dots & \partial_{x_n} \bfF_{a,b,c+1}
\end{bmatrix}  \neq \begin{bmatrix}
  \partial_{x_1} \bfF_{a,b+1,c} & & \\ & \ddots & \\ & & \partial_{x_n} \bfF_{a,b+1,c}
\end{bmatrix}.
$$
Nevertheless, the order of increment doesn’t affect the Schatten norm. Since we will only be interested in the Schatten norms of these matrices, we will simply label them as $\bfF_{a,b+1,c+1}$ without specifying the order. It is easy to see that $\bfF_{a,b,c}$ is a deterministic matrix if $a+b+c = D$  and $\bfF_{a,b,c} = {\bf 0}$ if $a+b+c > D$.

\vspace{-10 pt}

\paragraph{Non-Hermitian matrices.} 
Many results for Hermitian matrices can be easily extended to non-Hermitian matrices by the following argument.

\begin{definition} [Hermitian Dilation, see \cite{Tro15} Sec. 2.1.16]
\label{dilation}
For each $\bfA \in \R^{d_1 \times d_2}$, the Hermitian dilation $\mathcal{H}: \R^{d_1 \times d_2} \rightarrow \Hsymb^{(d_1 + d_2) \times (d_1 + d_2)}$ is defined as
$$
\calH (\bfA) = \begin{bmatrix} \bf 0 & \bfA \\ \trans{\bfA} & \bf 0 \end{bmatrix}.
$$
\end{definition}
Since the square of the Hermitian dilation satisfies $\calH (\bfA)^2 = \begin{bmatrix} \bfA \trans{\bfA} & \bf 0 \\ \bf 0 & \trans{\bfA}\bfA \end{bmatrix} $, it follows that $\norm{\calH(\bfA)} = \norm{\bfA}$. 

\vspace{-10 pt}

\paragraph{Constants.} 
We denote $C$ for some universal constants and $C_a$ for constants depending only on some parameter $a$. The values of the constants may differ from one instance to another.

\section{Decoupling Inequalities}\label{sec:decoupling}
In this work, we focus on decoupling inequalities for moments. The following decoupling inequality holds in surprising generality. 

\begin{lemma}[Decoupling Inequality, see \cite{PG:book} Theorem 3.1.1] \label{generic decoupling}
For natural numbers $n \geq m$, let $\bfx = \{x_i\}^n_{i=1}$ be a sequence of $n$ independent random variables in a measurable space $(S, \cal S)$, and let $\{x_i^{(k)}\}^n_{i=1}$, $k = 1, \dots, m$, be $m$ independent copies of this sequence. Let $B$ be a separable Banach space, and let $h_{\bfi}: S^m \rightarrow B$ be a measurable function such that $\E (\norm{h_{\bfi} (x_{i_1}, \dots, x_{i_m})}) \leq \infty $ for each $\bfi \in \calT^m_n$. Let $\Phi: [0, \infty) \rightarrow [0,\infty)$ be a convex nondecreasing function such that $\E \Phi (\norm{h_{\bfi} (x_{i_1}, \dots, x_{i_m})}) \leq \infty $ for all $\bfi \in \calT^m_n$. Then, 
$$
\E \Phi \left( \norm{\sum_{\bfi \in \calT^m_n} h_{\bfi} (x_{i_1}, \dots, x_{i_m})} \right) \leq \E \Phi \left( C_m\norm{\sum_{\bfi \in \calT^m_n} h_{\bfi} (x_{i_1}^{(1)}, \dots, x_{i_m}^{(m)})} \right),
$$
where $C_m = 2^m \cdot (m^m-1) \cdot ((m-1)^{(m-1)} - 1) \cdot \ldots \cdot 3.$
\end{lemma}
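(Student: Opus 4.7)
The proof proceeds by induction on $m$. The base case $m=1$ is immediate, since $\sum_{i_1=1}^n h_{i_1}(x_{i_1})$ already involves only one sample of $\bfx$, and $C_1=1$ suffices. For the inductive step, the key ingredient is a \emph{one-variable partial decoupling lemma}: there exists a constant $K_m = O(m^m)$ such that
$$
\E \Phi\Bigl(\norm{\sum_{\bfi \in \calT^m_n} h_\bfi(x_{i_1},\ldots,x_{i_m})}\Bigr)
~\leq~
\E \Phi\Bigl(K_m \cdot \norm{\sum_{\bfi \in \calT^m_n} h_\bfi(x_{i_1},\ldots,x_{i_{m-1}},x^{(m)}_{i_m})}\Bigr).
$$
Applying the inductive hypothesis conditionally on $\bfx^{(m)}$---where the right-hand side, viewed as a function of $\bfx$, becomes a degree-$(m-1)$ tetrahedral polynomial with $\bfx^{(m)}$-dependent ``coefficients''---decouples the remaining $m-1$ variables and telescopes the constant to $C_m = K_m \cdot C_{m-1}$.

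To prove the one-variable decoupling lemma, I would introduce i.i.d.\ uniform selectors $\xi_1,\ldots,\xi_n$ on $[m]$, inducing a random partition $[n] = I_1 \sqcup \cdots \sqcup I_m$ with $I_k = \{i : \xi_i = k\}$, and consider the selected sum
$$
S_\xi ~:=~ \sum_{\bfi \in \calT^m_n} h_\bfi(x_{i_1},\ldots,x_{i_m}) \cdot \indicator{i_1 \in I_1,\ldots,i_m \in I_m}.
$$
Because the entries of each $\bfi \in \calT^m_n$ are distinct, for every $\bfi$ we have $\mathbb{P}_\xi[i_1 \in I_1,\ldots,i_m \in I_m] = m^{-m}$, so $\E_\xi S_\xi = m^{-m} S$. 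The triangle inequality in the Banach space $B$ combined with Jensen's inequality (using $\Phi$ convex and nondecreasing) then yields $\E\Phi(\norm{S}) \leq \E\Phi(m^m \norm{S_\xi})$. Crucially, conditional on $\xi$, the families $(x_i)_{i \in I_k}$ are independent across $k$, each with the same joint distribution as $(x^{(k)}_i)_{i \in I_k}$; after integrating out the appropriate variables, $S_\xi$ therefore has the same distribution as its decoupled counterpart
$$
T_\xi ~:=~ \sum_{\bfi \in \calT^m_n} h_\bfi(x^{(1)}_{i_1},\ldots,x^{(m)}_{i_m}) \cdot \indicator{i_1 \in I_1,\ldots,i_m \in I_m},
$$
giving $\E\Phi(\norm{S}) \leq \E\Phi(m^m \norm{T_\xi})$.

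The main obstacle is the last step: removing the selector indicator and passing from $\norm{T_\xi}$ to the full decoupled quantity $\norm{T} := \norm{\sum_{\bfi} h_\bfi(x^{(1)}_{i_1},\ldots,x^{(m)}_{i_m})}$. One cannot dominate $T_\xi$ by $T$ pointwise, since the summands need not be positive. The standard remedy is a further averaging argument: decompose $T = \sum_{\phi : [m]\to[m]} R_\phi(\xi)$, where $R_\phi(\xi)$ collects the tuples with $\xi_{i_k} = \phi(k)$ for every $k$, and observe $T_\xi = R_{\mathrm{id}}(\xi)$. For bijective $\phi$, block-relabelling symmetry gives $R_\phi(\xi) \stackrel{d}{=} T_\xi$; for the $m^m - m!$ non-bijective $\phi$ (those with ``collisions'' $\phi(k)=\phi(k')$), one controls $R_\phi(\xi)$ by recursively invoking lower-degree decouplings on the collided coordinates. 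A careful convexity argument on $\Phi$ then trades the raw factor $m^m$ for $(m^m - 1)$, yielding $K_m \lesssim 2(m^m - 1)$, and iterating the $m-1$ single-step decouplings accumulates to $C_m = 2^m \prod_{k=2}^m (k^k - 1)$. I expect the collision accounting to be the most delicate point: disentangling the non-bijective-$\phi$ contributions is precisely what shaves $m^m$ down to $m^m - 1$ and locks in the exact constant claimed.
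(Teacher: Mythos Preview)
The paper does not supply a proof of this lemma: it is quoted from de la Pe\~na and Gin\'e and used as a black box. The paper's own contribution on this front is the more specialized \cref{decoupling}, which handles \emph{centered} random variables and achieves the sharper constant $d^d$ via exactly the random-partition-plus-Jensen argument you describe in your second paragraph, with the selector removed by observing that the omitted terms have conditional expectation zero and can therefore be reinserted for free before a final application of Jensen.

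Your sketch is in the right spirit but has a genuine gap at the step you yourself flag as the ``main obstacle.'' In the decomposition $T=\sum_{\phi:[m]\to[m]}R_\phi(\xi)$, the non-bijective pieces are \emph{not} lower degree: each $R_\phi$ is still a sum over $\bfi\in\calT^m_n$ of $h_\bfi(x^{(1)}_{i_1},\ldots,x^{(m)}_{i_m})$, so all $m$ coordinates remain drawn from distinct independent copies---the ``collision'' is only in which partition block the indices land, not in the random variables themselves. There is thus no degree reduction, and invoking ``lower-degree decouplings'' to control those terms is circular. This is precisely where the non-centered setting diverges from the paper's \cref{decoupling}: without $\E x_i=0$ you cannot realize $T_\xi$ as a conditional expectation of $T$, and the selector does not come off for free. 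The actual de la Pe\~na--Gin\'e argument closes this gap differently: the one-step reduction uses a \emph{two}-valued (Bernoulli) selector together with a polarization-type identity to peel off a single coordinate at cost $2(m^m-1)$, and then iterates. Your $m$-ary partition collapses all steps into one but, absent the mean-zero trick, leaves you unable to pass from $T_\xi$ to $T$; there is also a structural mismatch in that the $m$-ary partition already produces a fully decoupled $T_\xi$, so it is not actually a proof of the one-variable lemma you set out to establish.
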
 

In general, the decoupling constant $C_m$ is super-exponential in $m$, which might seem quite large. But nevertheless $C_m$ is independent of the dimension $n$. Furthermore, $C_m$ can be improved significantly for some special classes of functions $h$ and certain underlying distributions of $\bfx$. For instance, \cite{KW15} showed that if $h$ is a homogeneous multilinear form of degree $m$, then $C_m = m^m$. Moreover, if we assume $\bfx$ is a Gaussian random vector, $C_m$ further improves to $m^{m/2}$. 

While the proof of decoupling inequality for tail probability is slightly involved \cite{PMS95}, the proof of decoupling inequalities for moments are fairly elementary and can be interpreted combinatorially as partitioning the random vector $\bfx$. We present the following lemma that generalizes Lemma 6.21 \cite{Rau10} to arbitrary degree $d$.

\begin{lemma} \label{decoupling}
    Let $\bfx = \{x_j\}_{j = 1}^n$ be a sequence of independent random variables with $\E x_j = 0$ for all $j = 1 ,\dots, n$. Let $\{\bfB_{\bfi}\}_{\bfi\in \calT_n^d}$ be a multi-indexed sequence of deterministic matrices of the same dimension. Then for $1\leq t \leq \infty$
    $$
    \E \norm{\sum_{\bfi \in \calT_n^d} \bfB_{\bfi} \cdot x_{i_1} \dots x_{i_d}}_{2t} \leq d^d \cdot \E \norm{\sum_{\bfi \in \calT_n^d} \bfB_{\bfi} \cdot x_{i_1}^{(1)} \dots x_{i_d}^{(d)}}_{2t}, $$
    where  $\bfx^{(1)}, \dots,  \bfx^{(d)}$ denote $d$ independent copies of $\bfx$.
\end{lemma}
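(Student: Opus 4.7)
The plan is to prove the decoupling inequality by a random-partition argument, generalizing Rauhut's degree-2 proof. The central idea is to randomly color each index in $[n]$ with one of $d$ colors; on the event that a tuple $\bfi \in \calT_n^d$ has its $j$-th coordinate colored $j$ for every $j$, the coordinates split into $d$ disjoint groups, which provides exactly the combinatorial structure needed to swap in independent copies of $\bfx$.

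First, I would introduce i.i.d.\ uniform selectors $\delta_1, \ldots, \delta_n \in [d]$, independent of $\bfx$ and its copies, and set $I_j := \{i \in [n] : \delta_i = j\}$. Because every $\bfi \in \calT_n^d$ has distinct coordinates, the events $\{\delta_{i_j} = j\}_{j \in [d]}$ are independent with probability $1/d$ each, so
$$
\sum_{\bfi \in \calT_n^d} \bfB_\bfi \cdot x_{i_1} \cdots x_{i_d} ~=~ d^d \cdot \E_\delta \insquare{\sum_{\bfi \in \calT_n^d} \indicator{i_j \in I_j \text{ for all } j} \cdot \bfB_\bfi \cdot x_{i_1} \cdots x_{i_d}}.
$$
Then I apply Jensen's inequality (convexity of the Schatten norm $\norm{\cdot}_{2t}$) to move $\E_\delta$ outside the norm, take $\E_\bfx$, and use Fubini. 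Conditioning on $\delta$, the restricted sum has the feature that, for each $j$, the variables $\{x_i\}_{i \in I_j}$ appear exclusively in the $j$-th position of surviving tuples (if $i_k = i \in I_j$ for some surviving tuple, then $\delta_i = k$ forces $k = j$). By independence, I may therefore replace each such $x_i$ with the copy $x_i^{(j)}$ without altering the joint distribution of the restricted sum.

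The most delicate step is extending this restricted sum back to the full decoupled sum $\bfZ := \sum_{\bfi \in \calT_n^d} \bfB_\bfi \cdot x_{i_1}^{(1)} \cdots x_{i_d}^{(d)}$. Let $\calG_\delta$ be the sigma-algebra generated by $\{x_i^{(j)} : i \in I_j,\, j \in [d]\}$. For any tuple $\bfi$ with $i_{j^\star} \notin I_{j^\star}$ for some $j^\star$, the variable $x_{i_{j^\star}}^{(j^\star)}$ is independent of $\calG_\delta$ and mean zero, so its contribution to the conditional expectation vanishes. Hence the restricted sum is exactly $\E[\bfZ \mid \calG_\delta]$, and conditional Jensen's gives $\E \norm{\E[\bfZ \mid \calG_\delta]}_{2t} \leq \E \norm{\bfZ}_{2t}$. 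Since $\bfZ$ does not depend on $\delta$, averaging over $\delta$ leaves this unchanged, and chaining the three inequalities yields the claimed $d^d$ factor.

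I expect the main obstacle to be this final mean-zero projection step: it is the only place where the hypothesis $\E x_i = 0$ is used, and one must be careful to choose the right sigma-algebra so that precisely the ``bad'' tuples drop out. The rest of the argument is standard bookkeeping combining Fubini, Jensen, and distributional equalities.
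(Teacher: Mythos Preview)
Your proposal is correct and follows essentially the same route as the paper's proof: introduce uniform random colors on $[n]$, use Jensen to pull the color-expectation outside, replace variables by independent copies on the restricted sum (same distribution since the color classes are disjoint), and then recover the full decoupled sum via a conditional-expectation argument exploiting $\E x_i = 0$. The only cosmetic difference is that the paper fixes a single deterministic coloring $\mathbf{r}^*$ achieving the averaged bound before doing the replacement step, whereas you keep the average over $\delta$ throughout and note that the final decoupled sum $\bfZ$ is $\delta$-free; both variants yield the same constant $d^d$.
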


\begin{proof}
    Consider a random partition of $\bf x$ into $d$ parts and let $\bf r$ be the \textit{partitioner}. Formally, ${\bf r} = (r_1, \dots, r_n)$ is a sequence of independent random variables with each $r_k$ uniformly distributed on $\{1, \dots, d\}$, \ie for $1\leq k \leq n$,
    $$\Psymb (r_k = 1) = \Psymb (r_k = 2) = \dots = \Psymb (r_k = d) = 1/d. $$
    Define an event $E:= \{r_{i_1} = 1, \cdots, r_{i_d} = d\}$ for each $\bfi \in \calT_n^d$. Conditioned on $\bfx$, 
    $$\E_{\bf r} \left[ \mathbb{1}_E (r_{i_1}, \dots, r_{i_d}) \right] = 1/d^d.$$ 
It follows that
\begin{align*}
        F:&= ~\E_{\bfx} \norm{\sum_{\bfi \in \calT_n^d} \bfB_{\bfi} \cdot x_{i_1} \dots x_{i_d}}_{2t} \\
        & = ~d^d \cdot \E_{\bfx} \norm{\E_{\bf r} \left [ \sum_{\bfi \in \calT_n^d} \mathbb{1}_{E} (r_{i_1}, \dots, r_{i_d}) \cdot \bfB_{\bfi} \cdot x_{i_1} \dots x_{i_d} \right] }_{2t} \\
        &\leq ~d^d \cdot \E_{\bf r} \E_\bfx \norm{\sum_{\bfi \in \calT_n^d} \mathbb{1}_{E} (r_{i_1}, \dots, r_{i_d}) \cdot \bfB_{\bfi} \cdot x_{i_1} \dots x_{i_d}}_{2t}, 
\end{align*}
where the last step is due to Jensen’s inequality and Fubini’s theorem.
Since the expectation over $\bf r$ satisfies the inequality, this implies the existence of an ${\bf r}^*$ satisfying the inequality. 

Fix ${\bf r}^*$ and define the \textit{partitioning} with respect to ${\bf r}^*$ as
$$P_1 = \{k\in [n]: r^*_k = 1 \}, ~\dots ~, P_d = \{k \in [n]: r^*_k = d\}.$$
Since these partitions have no intersections and $x_j$’s are independent random variables, replacing $x_{i_1}\cdots x_{i_d}$ with $x_{i_1}^{(1)}\cdots x_{i_d}^{(d)}$ for any $\bfi \in P_1 \times \cdots \times P_d$ will not change the distribution. Hence
\begin{equation} \label{eq1}
    F ~\leq ~ d^d \cdot \E \norm{\sum_{\bfi \in P_1 \times \cdots \times P_d} \bfB_{\bfi} \cdot x_{i_1}^{(1)} \dots x_{i_d}^{(d)}}_{2t}. 
\end{equation}

Denote all variables in $P_1 \times \cdots \times P_d$ by
$$
\calX:= \{x_{i_1}^{(1)}, ~\ldots, ~ x_{i_d}^{(d)}:  ~\forall~ i_1\in P_1, ~\ldots~, i_d \in P_d\},
$$
and denote the rest of the variables by $\calX^c$. It remains to show that the sum on the right-hand side of (\ref{eq1}) is over all ${\bf i} \in \calT^d_n$.
Observe that
\begin{equation} \label{eq2}
     \E_{\calX^c} \left[ \sum_{\bfi \not \in P_1 \times \cdots \times P_d} \bfB_{\bfi} \cdot x_{i_1}^{(1)} \dots x_{i_d}^{(d)} ~ ~\Biggr \rvert ~~ \calX ~~ \right] = {\bf 0}.
\end{equation}
Since the sum in (\ref{eq2}) has conditional expectation zero, we can add it to (\ref{eq1}) to get
$$
F \leq d^d \cdot \E_{\calX} \norm{\sum_{\bfi \in P_1 \times \cdots \times P_d} \bfB_{\bfi} \cdot x_{i_1}^{(1)} \dots x_{i_d}^{(d)} ~ + ~ \E_{\calX^c} \left[ \sum_{\bfi \not \in P_1 \times \cdots \times P_d} \bfB_{\bfi} \cdot x_{i_1}^{(1)} \dots x_{i_d}^{(d)} ~ ~\Biggr \rvert ~~ \calX ~~ \right]}_{2t}.
$$
Since the two sums on the right-hand side are independent conditioned on $\calX$, 
$$
F \leq d^d \cdot \E_{\calX} \norm{ \E_{\calX^c} \left[ \sum_{\bfi \in P_1 \times \cdots \times P_d} \bfB_{\bfi} \cdot x_{i_1}^{(1)} \dots x_{i_d}^{(d)} ~ + \sum_{\bfi \not \in P_1 \times \cdots \times P_d} \bfB_{\bfi} \cdot x_{i_1}^{(1)} \dots x_{i_d}^{(d)} ~ ~\Biggr \rvert ~~ \calX ~~ \right]}_{2t}.
$$
A conditional application of Jensen’s inequality yields the desired inequality,
\[
F \leq d^d \cdot \E_{{\bf x}^{(1)} \dots {\bf x}^{(d)}} \norm{\sum_{\bfi \in \calT^d_n} \bfB_{\bfi} \cdot x_{i_1}^{(1)} \dots x_{i_d}^{(d)}}_{2t}. 
\eqno \qedhere
\]
\end{proof}

\subsection{Improved Decoupling for Random Variables with Structured Indices}\label{sec:structured}
For a generic degree $d$ multilinear polynomial random matrices, an application of \cref{decoupling} yields a decoupling constant of $d^d$, which might not be optimal for polynomial random matrices with additional structures. We will show that the additional structures enable us to give a tighter decoupling inequality through a careful partitioning.

Building on \cref{decoupling}, we prove a decoupling inequality for polynomial random matrices, in which the indices of random variables have a graph structure. Let $G=(V,E)$ be a fixed simple graph with $|V|=k$, and $|E|=d$. We denote the vertices by $V(G) = \{v_1, \ldots, v_k\}$ and the ordered edge set by $E(G) = \{(i_1, j_1), \ldots, (i_d, j_d)\}$. For any $\phi \in \calT^k_n$, we write $\phi(i)$ for the $i$-th component of $\phi$ and $\phi(E)$ for $\{(\phi(i_1), \phi(j_1)), \ldots, (\phi(i_d), \phi(j_d))\}$.
\begin{lemma} \label{graph_decoupling}
    Let $\bfz = \{Z_{\phi(i), \phi(j)}\}_{\phi \in \calT^k_n, (i,j)\in E}$ be a double sequence of independent random variables with $\E Z_{\phi(i),\phi(j)} = 0$ for all $(i,j) \in E$ and $ \phi \in \calT^k_n$. Let $\{\bfB_{\phi(E)}\}_{\phi \in \calT^k_n}$ be a multi-indexed sequence of deterministic matrices of the same dimension. Then for $d\leq k(k-1)$ and $1 \leq t \leq \infty$,
    $$
    \E \norm{\sum_{\phi \in \calT_n^k} \bfB_{\phi(E)} \cdot \prod_{(i,j) \in E} Z_{\phi(i), \phi(j)}}_{2t} \leq k^k \cdot \E \norm{\sum_{\phi \in \calT_n^k}\bfB_{\phi(E)} \cdot Z_{\phi(i_1), \phi(j_1)}^{(1)}\cdots Z_{\phi(i_d), \phi(j_d)}^{(d)}}_{2t}, $$
    where  $\bfz^{(1)}, \ldots, \bfz^{(d)}$ denote $d$ independent copies of $\bfz$.
\end{lemma}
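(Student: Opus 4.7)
The proof should parallel the proof of \cref{decoupling}, but with the crucial twist that we partition the index set $[n]$ into $k$ parts (one per vertex of $G$) rather than into $d$ parts (one per edge). Since each edge $(i,j)\in E$ is fully determined by the pair of vertex labels $(i,j)\in V\times V$, this vertex-based partitioning simultaneously separates all $d$ edges, yielding the constant $k^k$ instead of $d^d$.

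Concretely, the plan is to introduce a partitioner $\mathbf{r}=(r_1,\ldots,r_n)$, where each $r_j$ is independently uniform on $[k]$. For each $\phi\in\calT^k_n$ define the event
$$
E_\phi \;:=\; \{r_{\phi(1)}=1,\; r_{\phi(2)}=2,\; \ldots,\; r_{\phi(k)}=k\},
$$
so that $\E_{\mathbf r}[\mathbb{1}_{E_\phi}] = 1/k^k$. Writing the matrix inside the norm as the $k^k$-scaled expectation over $\mathbf r$ of the thinned sum, pulling the expectation out by Jensen and Fubini, and then fixing a realization $\mathbf r^*$ that attains at most the average, gives
$$
F\;:=\;\E\Bigl\|\sum_{\phi\in\calT^k_n}\bfB_{\phi(E)}\prod_{(i,j)\in E}Z_{\phi(i),\phi(j)}\Bigr\|_{2t}
\;\leq\;k^k\cdot\E\Bigl\|\sum_{\phi:\,\phi(v)\in P_v\,\forall v}\bfB_{\phi(E)}\prod_{(i,j)\in E}Z_{\phi(i),\phi(j)}\Bigr\|_{2t},
$$
where $P_v := \{k\in[n]:r^*_k=v\}$ for $v\in[k]$.

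Now comes the key observation that makes the constant $k^k$ work: whenever $\phi(v)\in P_v$ for all $v$, the $d$ random variables $\{Z_{\phi(i),\phi(j)}\}_{(i,j)\in E}$ appearing in each term have their first index in $P_i$ and second index in $P_j$, which lie in pairwise disjoint product sets $P_i\times P_j$ (since the $P_v$'s are disjoint and the graph is simple, so the ordered pairs $(i,j)$ indexing edges are distinct; the condition $d\leq k(k-1)$ ensures this is consistent). Consequently these $d$ variables are mutually independent, and we may replace each $Z_{\phi(i),\phi(j)}$ by the corresponding entry $Z^{(\ell)}_{\phi(i),\phi(j)}$ of an independent copy $\bfz^{(\ell)}$ indexed by its edge $(i_\ell, j_\ell)\in E$ without altering the distribution.

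Finally, as in the proof of \cref{decoupling}, one extends the sum from the restricted $\phi$ (those compatible with the partition) to all of $\calT^k_n$ by adding back the terms with $\phi(v)\notin P_v$ for some $v$, wrapping them inside a conditional expectation over the $Z^{(\ell)}$-variables whose indices do not fit the partition. Those added terms have conditional mean zero (since the added variables are centered and the surviving and added sums are conditionally independent), so a conditional Jensen's inequality absorbs the extra mass without penalty and produces the right-hand side. The main subtlety to verify carefully is the independence/replacement step — specifically, that the disjointness of the $P_i\times P_j$ pairs across the $d$ edges of $G$ genuinely allows the simultaneous upgrade from a single family $\bfz$ to $d$ independent copies; the rest is a direct adaptation of the earlier argument.
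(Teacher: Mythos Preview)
Your proposal is correct and follows essentially the same approach as the paper's proof: a vertex partitioner $\mathbf{r}$ uniform on $[k]$, the event $\{r_{\phi(v)}=v\ \forall v\}$ with probability $1/k^k$, Jensen/Fubini to fix an $\mathbf{r}^*$, the observation that the disjoint vertex parts $P_v$ induce disjoint edge-index sets $P_i\times P_j$ allowing replacement by independent copies, and finally the conditional mean-zero plus Jensen step to extend the sum to all of $\calT^k_n$. The paper phrases the middle step slightly more formally via an ``edge partitioning'' $P_e=\{(P_i,P_j)\}_{(i,j)\in\calT^2_k}$ from which one selects $d$ elements matching the pattern of $E(G)$, but this is exactly your ``disjoint product sets'' observation in different notation.
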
 
\begin{remark}
   It is crucial for the indices of all monomials to share the same structure so that $d$ independent copies of $\bfz$ suffices. Otherwise, $k(k-1)$ independent copies are needed in general. 
\end{remark}

\begin{proof}
    Consider a random partition of the ground set $[n]$ into $k$ parts and let $\bf r$ be the \textit{vertex partitioner}. Formally, let ${\bf r} = (r_1, \dots, r_n)$ be a sequence of independent random variables with each $r_p$ uniformly distributed on $\{1, \dots, k\}$, \ie for $1\leq p \leq n$,
    $$\Psymb (r_p = 1) = \Psymb (r_p = 2) = \dots = \Psymb (r_p = k) = 1/k. $$
    Define an event $A: = \{r_{\phi(v_1)} = 1, \cdots, r_{\phi(v_k)} = k\}$ for each $\phi \in \calT^k_n$. Conditioned on $\bfz$, 
    $$\E_{\bf r} \left[ \mathbb{1}_{A} (r_{\phi(v_1)}, \dots, r_{\phi(v_k)}) \right] = 1/k^k.$$
    It follows that
\begin{align*}
        F:&= \E_{\bfz} \norm{\sum_{\phi \in \calT_n^k} \bfB_{\phi(E)} \cdot \prod_{(i,j) \in E} Z_{\phi(i), \phi(j)} }_{2t} \\
        & = k^k \cdot \E_{\bfz} \norm{\E_{\bf r} \left [ \sum_{\phi \in \calT_n^k}  \mathbb{1}_{A} (r_{\phi(v_1)}, \dots, r_{\phi(v_k)}) \cdot \bfB_{\phi(E)} \cdot \prod_{(i,j) \in E} Z_{\phi(i), \phi(j)} \right] }_{2t} \\
        &\leq k^k \cdot \E_{\bf r} \E_\bfz \norm{\sum_{\phi \in \calT_n^k} \mathbb{1}_{A} (r_{\phi(v_1)}, \dots, r_{\phi(v_k)}) \cdot \bfB_{\phi(E)} \cdot \prod_{(i,j) \in E} Z_{\phi(i), \phi(j)}}_{2t}, 
\end{align*}
where the last step is due to Jensen’s inequality and Fubini’s theorem.
Since the expectation over $\bf r$ satisfies the inequality, this implies the existence of an ${\bf r}^*$ satisfying the inequality. 

Fix ${\bf r}^*$ and define the \textit{vertex partitioning} with respect to ${\bf r}^*$ as
$$P_1 = \{p\in [n]: r^*_p = 1 \}, ~\ldots~ , P_k = \{p\in [n]: r^*_p = k\},$$ 
which in turn induces an \textit{edge partitioning}, 
$$ P_e ~ := ~\{(P_i, P_j)\}_{(i,j)\in \calT^2_k}. $$
Notice that there are $k(k-1)$ elements in $P_e$, but we only have $d$ edges to partition. So we select $d$ elements out of $P_e$ in the following way. Fix an arbitrary $\phi^* \in \calT^k_n$, choose 
$$
P^*_1 = (P_{\phi^*(i_1)}, P_{\phi^*(j_1)}), ~ \ldots~,  P^*_d = (P_{\phi^*(i_d)}, P_{\phi^*(j_d)}).
$$
Let’s call $\{P_1^*, \ldots P^*_d\}$ a \textit{d-edge partitioning}. In other words, we may select any $d$ elements out of $P_e$ to form a \textit{d-edge partitioning} so long as the pattern of the indices of $(P_i, P_j)$’s matches that of $E(G)$.
Since the elements in $P_e$ have no intersections, the partitions in $\{P_1^*, \ldots, P^*_d\}$ also have no intersections. Since $Z_{\phi(i),\phi(j)}$’s are independent random variables, replacing $Z_{\phi(i_1), \phi(j_1)} \cdots Z_{\phi(i_d), \phi(j_d)}$ with $Z_{\phi(i_1), \phi(j_1)}^{(1)} \cdots Z_{\phi(i_d), \phi(j_d)}^{(d)}$ for any $\phi(E) \in P^*_1 \times \cdots \times P^*_d$ will not change the distribution. Hence
\begin{equation} \label{eq3}
  F \leq k^k \cdot \E \norm{\sum_{\phi(E) \in P^*_1\times \cdots \times P^*_d} \bfB_{\phi(E)} \cdot Z_{\phi(i_1), \phi(j_1)}^{(1)} \cdots Z_{\phi(i_d), \phi(j_d)}^{(d)}}_{2t}.   
\end{equation}

Denote all variables in $P^*_1 \times \cdots \times P^*_d$ by
$$
\calZ:= \left \{Z_{\phi(i_1), \phi(j_1)}^{(1)}, ~\ldots, ~ Z_{\phi(i_d), \phi(j_d)}^{(d)} :  ~\forall~ (\phi(i_1), \phi(j_1)) \in P_1^*, ~\ldots~, (\phi(i_d), \phi(j_d)) \in P_d^*\right\},
$$
and denote the rest of the variables by $\calZ^c$.
It remains to show that the sum on the right-hand side of (\ref{eq3}) is over all $\phi \in \calT^k_n$.
Observe that
\begin{equation} \label{eq4}
  \E_{\calZ^c} \left[ \sum_{\phi(E) \not \in P^*_1\times \cdots \times P^*_d} \bfB_{\phi(E)} \cdot Z_{\phi(i_1), \phi(j_1)}^{(1)} \cdots Z_{\phi(i_d), \phi(j_d)}^{(d)} ~ ~\Biggr \rvert ~~ \calZ ~~\right] = {\bf 0}.  
\end{equation}
Denote $P^*_1\times \cdots \times P^*_d$ by $\calP$. Since the sum in (\ref{eq4}) has conditional expectation zero, we can add it to (\ref{eq3}) to get
$$
F \leq k^k \cdot \E_{\calZ} \norm{\sum_{\phi(E) \in \calP} \bfB_{\phi(E)} \cdot Z_{\phi(i_1), \phi(j_1)}^{(1)} \cdots Z_{\phi(i_d), \phi(j_d)}^{(d)} + \E_{\calZ^c} \left[ \sum_{\phi(E) \not \in \calP} \bfB_{\phi(E)} \cdot Z_{\phi(i_1), \phi(j_1)}^{(1)} \cdots Z_{\phi(i_d), \phi(j_d)}^{(d)} ~ ~\Biggr \rvert ~~ \calZ ~~\right]}_{2t}.
$$
Since the two sums on the right-hand side are independent conditioned on $\calZ$, 
$$
F \leq k^k \cdot \E_{\calZ} \norm{\E_{\calZ^c} \left[ \sum_{\phi(E) \in \calP} \bfB_{\phi(E)} \cdot Z_{\phi(i_1), \phi(j_1)}^{(1)} \cdots Z_{\phi(i_d), \phi(j_d)}^{(d)} +  \sum_{\phi(E) \not \in \calP} \bfB_{\phi(E)} \cdot Z_{\phi(i_1), \phi(j_1)}^{(1)} \cdots Z_{\phi(i_d), \phi(j_d)}^{(d)} ~ ~\Biggr \rvert ~~ \calZ ~~\right]}_{2t}.
$$
A conditional application of Jensen’s inequality yields the desired inequality,
$$
F \leq k^k \cdot \E_{\bfz^{(1)}, \ldots, \bfz^{(d)}} \norm{\sum_{\phi \in \calT_n^k}  \bfB_{\phi(E)} \cdot Z_{\phi(i_1), \phi(j_1)}^{(1)} \cdots Z_{\phi(i_d), \phi(j_d)}^{(d)}}_{2t}. 
\eqno \qedhere
$$ 
\end{proof}

\section{Multilinear Polynomial Random Matrices}\label{sec:multilinear}
In this section, we prove a moment bound for multilinear polynomial random matrices with bounded, normalized random variables. We follow our recursion framework by decoupling the polynomial random matrices first and then apply the matrix Rosenthal inequality recursively to sums of (conditionally) independent random matrices. 

To this end, we derive the non-Hermitian matrix Rosenthal inequality from the Hermitian matrix Rosenthal inequality by Hermitian dilation.

\begin{lemma} [Non-Hermitian Matrix Rosenthal Inequality]
\label{rosenthal}
Suppose that $t=1$ or $t \geq 1.5$. Consider a finite sequence $\{\bfY_k\}_{k\geq 1}$ of centered, independent, random matrices, and assume that $\E \norm{\bfY_k}_{4t}^{4t} < \infty$. Then
$$
    \E \norm{\sum_{k}\bfY_k}_{4t}^{4t} \leq (16t)^{3t} \cdot \left\{ \norm{\left(\sum_k\E  \bfY_k \trans{\bfY}_k \right)^{1/2}}_{4t}^{4t} + \norm{ \left(\sum_k\E \trans{\bfY}_k\bfY_k \right)^{1/2} }_{4t}^{4t} \right\} + (8t)^{4t}\cdot \left(\sum_k \E \norm{\bfY_k}_{4t}^{4t} \right).
$$
\end{lemma}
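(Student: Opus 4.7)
The plan is to reduce to the Hermitian case via the Hermitian dilation $\calH$ of \cref{dilation}, apply the Hermitian matrix Rosenthal inequality to the dilated summands, and then unpack the resulting block-diagonal matrices to recover the two separate quadratic variation terms $\sum_k \E \bfY_k \trans{\bfY_k}$ and $\sum_k \E \trans{\bfY_k} \bfY_k$.

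First, define $\bfX_k := \calH(\bfY_k) = \begin{bmatrix} \bf 0 & \bfY_k \\ \trans{\bfY_k} & \bf 0 \end{bmatrix}$. Since $\calH$ is linear and each $\bfY_k$ is centered, the $\bfX_k$ are centered, independent Hermitian random matrices, and $\sum_k \bfX_k = \calH(\sum_k \bfY_k)$. The key computation is that $\calH(\bfA)^2 = \begin{bmatrix} \bfA\trans{\bfA} & \bf 0 \\ \bf 0 & \trans{\bfA}\bfA \end{bmatrix}$, which means the singular values of $\calH(\bfA)$ are exactly the singular values of $\bfA$, each appearing with multiplicity two. Consequently, for any $p \geq 1$,
\[
\norm{\calH(\bfA)}_{p}^{p} ~=~ 2 \cdot \norm{\bfA}_p^p \mper
\]

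Next, I would invoke the Hermitian matrix Rosenthal inequality (e.g.\ the version in \cite{MJC12, Tro15}) applied at the Schatten-$4t$ level to the summands $\bfX_k$: for $t = 1$ or $t \geq 1.5$,
\[
\E \norm{\textstyle\sum_k \bfX_k}_{4t}^{4t} ~\leq~ (16t)^{3t} \cdot \norm{\bigl(\textstyle\sum_k \E \bfX_k^2\bigr)^{1/2}}_{4t}^{4t} ~+~ (8t)^{4t} \cdot \sum_k \E \norm{\bfX_k}_{4t}^{4t} \mper
\]
Using the dilation identity on the left gives a factor of $2$, i.e.\ $\E \norm{\sum_k \bfX_k}_{4t}^{4t} = 2 \cdot \E \norm{\sum_k \bfY_k}_{4t}^{4t}$, and on the last term gives $\sum_k \E \norm{\bfX_k}_{4t}^{4t} = 2 \sum_k \E \norm{\bfY_k}_{4t}^{4t}$, both of which combine cleanly.

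The remaining step is to interpret the middle term. Because $\calH(\bfY_k)^2$ is block diagonal with blocks $\bfY_k \trans{\bfY_k}$ and $\trans{\bfY_k} \bfY_k$, linearity of expectation yields
\[
\textstyle\sum_k \E \bfX_k^2 ~=~ \begin{bmatrix} \sum_k \E \bfY_k \trans{\bfY_k} & \bf 0 \\ \bf 0 & \sum_k \E \trans{\bfY_k} \bfY_k \end{bmatrix} \mper
\]
Both diagonal blocks are PSD, so the principal square root is block-diagonal with blocks $(\sum_k \E \bfY_k \trans{\bfY_k})^{1/2}$ and $(\sum_k \E \trans{\bfY_k} \bfY_k)^{1/2}$, and the Schatten $4t$-norm to the $4t$ power of a block-diagonal matrix is the sum of the $4t$-th powers of the blocks' Schatten norms. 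Substituting these identifications and dividing both sides by $2$ gives exactly the claimed inequality.

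The main subtlety is simply choosing the correct statement of the Hermitian Rosenthal inequality so that its constants match $(16t)^{3t}$ and $(8t)^{4t}$ after the dilation bookkeeping; the argument itself is a transparent application of $\calH$ combined with the block-diagonal structure of $\calH(\bfY_k)^2$, and requires no further probabilistic input beyond the Hermitian version.
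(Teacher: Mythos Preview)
Your proposal is correct and follows essentially the same approach as the paper: apply Hermitian dilation $\bfX_k = \calH(\bfY_k)$, invoke the Hermitian matrix Rosenthal inequality from \cite{MJC12}, and unpack the block-diagonal structure of $\calH(\bfY_k)^2$ to split the variance term into the two pieces $\sum_k \E \bfY_k\trans{\bfY_k}$ and $\sum_k \E \trans{\bfY_k}\bfY_k$. The factor-of-$2$ bookkeeping you describe (on the left-hand side and on the last term) is exactly what the paper does, and as you implicitly note, after dividing by $2$ one actually obtains a constant $(16t)^{3t}/2$ on the middle term, which is then relaxed to the stated $(16t)^{3t}$.
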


\begin{proof} 
For a sequence of Hermitian matrices $\{\bfX_k\}_{k \geq 1}$ satisfying all the assumptions above, we have (Hermitian) matrix Rosenthal inequality \cite{MJC12} Corollary 7.4,
$$
\E \norm{\sum_{k}\bfX_k}_{4t}^{4t} \leq (16t)^{3t} \cdot \norm{\left(\sum_k\E \bfX^2_k \right)^{1/2}}_{4t}^{4t} + (8t)^{4t}\cdot \sum_k \E \norm{\bfX_k}_{4t}^{4t}.
$$
We use Hermitian dilation to extend the inequality to non-Hermitian matrices by setting $\bfX_k = \calH(\bfY_k)$ and notice that 
$$
\E \norm{\sum_{k} \begin{bmatrix}
{\bf 0} & \bfY_k \\ \trans{\bfY}_k & {\bf 0}
\end{bmatrix}}_{4t}^{4t} = \E \tr \left(\begin{bmatrix}
((\sum_{k} \bfY_k)(\sum_{k} \trans{\bfY}_k))^{2t} & {\bf 0} \\  {\bf 0} & ((\sum_{k} \trans{\bfY}_k)(\sum_{k} \bfY_k))^{2t} 
\end{bmatrix}\right) = 2\E \norm{\sum_k \bfY_k}_{4t}^{4t},
$$
\begin{align*}
      \norm{\left(\sum_k\E \begin{bmatrix} \bfY_k \trans{\bfY}_k & {\bf 0} \\ {\bf 0} & \trans{\bfY}_k \bfY_k \end{bmatrix} \right)^{1/2}}_{4t}^{4t} &= \tr \left(\begin{bmatrix}\sum_k\E  \bfY_k \trans{\bfY}_k & \bf 0 \\ \bf 0 & \sum_k\E \trans{\bfY}_k\bfY_k \end{bmatrix} \right)^{2t}  \\
      &= \norm{\left(\sum_k\E  \bfY_k \trans{\bfY}_k \right)^{1/2}}_{4t}^{4t} + \norm{ \left(\sum_k\E \trans{\bfY}_k \bfY_k \right)^{1/2} }_{4t}^{4t}, \\
      \sum_k \E \norm{\begin{bmatrix} {\bf 0} & \bfY_k \\ \trans{\bfY}_k & {\bf 0} \end{bmatrix}}_{4t}^{4t}  &= 2 \sum_k \E \norm{\bfY_k}_{4t}^{4t}. 
\end{align*}
Hence, the result follows.
\end{proof}

For the ease of notation, we demonstrate our recursion framework in a special case of quadratic form first. The multilinear form follows exactly the same reasoning.

\begin{theorem} [Quadratic Recursion]
\label{quadratic}
Let $\{x_i\}_{i = 1}^n$ be a sequence of i.i.d random variables with $\E x_i= 0$,  $\E x_i^2= 1$ and $|x_i| \leq L$ for all $1\leq i \leq n$. Let $\{\bfA_{i,j}\}_{(i,j) \in \calT^2_n}$ be a double sequence of deterministic matrices of the same dimension. Define 
$$
\bfF(\bfx) = \sum_{(i,j) \in \calT^2_n} \bfA_{i,j} x_{i}x_{j}.
$$
Suppose that $\bfF(\bfx)$ is also permutation symmetric, \ie $\bfA_{i,j} = \bfA_{j,i}$, $\forall (i,j) \in \calT^2_n$. Then for $a,b,c\in \Z_{\geq 0}$ and $2 \leq t < \infty$,
$$
\E \norm{\bfF(\bfx) - \E \bfF(\bfx)}_{4t} ~ \leq ~ 2 (32t)^{2} \cdot \left( \sum_{\substack{a,b,c: \\a+b+c=2 }} L^c \cdot \norm{ \bfF_{a,b,c}}_{4t} \right).
$$
\end{theorem}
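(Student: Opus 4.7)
The plan is to follow the two-step paradigm of the paper --- first decouple, then linearize and apply the matrix Rosenthal inequality --- but with \emph{two} iterations of the latter step, since the polynomial has degree $2$. Since the theorem bounds the first Schatten moment $\E\norm{\bfF}_{4t}$, I will actually work with $\E\norm{\bfF}_{4t}^{4t}$ throughout and convert at the end using Lyapunov's inequality together with the subadditivity of $y\mapsto y^{1/4t}$ for splitting the resulting sum over $(a,b,c)$.

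First, apply the decoupling inequality \cref{decoupling} to the convex non-decreasing function $\Phi(y)=y^{4t}$ with $d=2$ (legitimate since $\E\bfF=\zero$, because the sum is over distinct pairs and each $x_i$ is centered). This yields
\[
\E\norm{\bfF(\bfx)}_{4t}^{4t} ~\leq~ 4^{4t}\cdot \E\norm{\tilde\bfF(\bfx^{(1)},\bfx^{(2)})}_{4t}^{4t}\mcom
\]
where $\tilde\bfF(\bfx^{(1)},\bfx^{(2)}) \defeq \sum_{(i,j)\in\calT_n^2}\bfA_{i,j}\,x^{(1)}_i x^{(2)}_j$ and $\bfx^{(1)},\bfx^{(2)}$ are independent copies of $\bfx$. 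Next, condition on $\bfx^{(1)}$ and write $\tilde\bfF=\sum_j \bfY_j$ with $\bfY_j=\bfC_j(\bfx^{(1)})\cdot x^{(2)}_j$ and $\bfC_j(\bfx^{(1)})\defeq\sum_{i\neq j}\bfA_{i,j}x^{(1)}_i$. The $\bfY_j$ are conditionally independent and centered, so \cref{rosenthal} applies. Using $\E(x^{(2)}_j)^2 = 1$ and $|x^{(2)}_j|\leq L$, the three Rosenthal terms collapse into the Schatten norms of (i)~the horizontal block assembly $[\bfC_1,\dots,\bfC_n]$, (ii)~its transpose, and (iii)~the block-diagonal $\diag(\bfC_1,\dots,\bfC_n)$ scaled by $L^{4t}$.

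Each of the three resulting random matrices is itself a \emph{linear} function of $\bfx^{(1)}$ of the form $\sum_i x^{(1)}_i\bfD_i$ for appropriate deterministic blocks. Taking expectation over $\bfx^{(1)}$ and applying \cref{rosenthal} a second time to each of the three expressions produces a total of nine block matrices; these collapse (via Schatten-norm invariance under transpose and under reordering of differentiations, as noted in \cref{partial}) into six distinct terms, one for each triple $(a,b,c)\in\Z_{\geq 0}^3$ with $a+b+c=2$. Each ``diagonal-type'' Rosenthal invocation contributes one factor of $L^{4t}$, so the term indexed by $(a,b,c)$ carries $L^{4ct}$. The key identification step is to match these six deterministic matrices with $\bfF_{a,b,c}$ from \cref{partial}: the permutation symmetry of $\bfF$ gives $\bfC_j(\bfx^{(1)}) = \tfrac12\,\partial_{x_j}\bfF\restrict{\bfx^{(1)}}$, and the three Rosenthal ``operations'' (row-stack, column-stack, diagonal) correspond exactly to the three increment rules $a\mapsto a+1$, $b\mapsto b+1$, $c\mapsto c+1$.

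Combining the factors gives $\E\norm{\bfF}_{4t}^{4t}\lesssim\sum_{a+b+c=2}4^{4t}\cdot(16t)^{3t}\cdot(16t)^{3t}\cdot L^{4ct}\norm{\bfF_{a,b,c}}_{4t}^{4t}$. Taking $4t$-th roots and using subadditivity of $y\mapsto y^{1/4t}$ produces the claimed bound $2\,(32t)^{2}\sum_{a+b+c=2}L^{c}\norm{\bfF_{a,b,c}}_{4t}$, where the leading factor $2$ absorbs the Hermitian dilation constant in the derivation of \cref{rosenthal}. The main obstacle is the bookkeeping in the identification step: one must verify that the nine block matrices produced by the two Rosenthal applications --- which a priori differ both in their internal block structure and in the order of operations (for instance, row-stack followed by diagonal versus diagonal followed by row-stack) --- really do have equal Schatten norms and match the six matrices $\bfF_{a,b,c}$ up to constants that are harmlessly absorbed into $2(32t)^2$. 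The factor of $\tfrac12$ from permutation symmetry, raised to the power $4t$ in two separate places, must also be tracked so that it cancels cleanly rather than accumulating into the leading constant.
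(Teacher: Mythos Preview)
Your plan is essentially identical to the paper's proof: decouple with constant $4^{4t}$, apply \cref{rosenthal} once (conditioning on one copy), apply it again to each of the three resulting linear forms, and collapse the nine outputs to the six $\bfF_{a,b,c}$ via permutation symmetry and Schatten-norm invariance. One minor correction on bookkeeping: the leading factor $2$ does not come from Hermitian dilation (those constants are already absorbed into \cref{rosenthal}) but from the fact that each of the three ``cross'' terms $\bfF_{1,1,0},\bfF_{1,0,1},\bfF_{0,1,1}$ appears twice among the nine outputs; and the $\tfrac12$ factors you worry about only \emph{help} the upper bound, so they can simply be dropped rather than tracked for cancellation.
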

\begin{remark}
For $a,b,c\in \Z_{\geq 0}$ such that $a+b+c = 2$, the partial derivative matrices $\{\bfF_{a,b,c}\}$ are block matrices whose blocks consist of $\{\bfA_{i,j}\}_{(i,j) \in \calT^2_n}$ and zero matrices. For instance,
$$
\bfF_{1,1,0} = \begin{bmatrix}
    \bf 0 & \bfA_{1,2} & \dots & \bfA_{1,n} \\
    \bfA_{2,1} & \bf 0 & \dots & \bfA_{2,n} \\
    \vdots & \vdots & \ddots & \vdots \\
     \bfA_{n,1} & \bfA_{n,2} & \dots & \bf 0 
\end{bmatrix},
$$
$$
\bfF_{0,2,0} = \begin{bmatrix}
 \bf 0 & \bfA_{1,2} & \dots & \bfA_{1,n} \rvert & \dots & \rvert \bfA_{n,1} & \dots & \bfA_{n,n-1} & \bf 0
\end{bmatrix},
$$
and 
$$
\bfF_{0,1,1} = \begin{bmatrix}
    \bf 0 & \bfA_{1,2} & \dots & \bfA_{1,n} \\
     & & & & \bfA_{2,1} & \bf 0 & \dots & \bfA_{2,n} \\
     & & & & & & & & \ddots \\
     & & & & & & & & & \bfA_{n,1}  & \bfA_{n,2} & \dots & \bf 0 \\
\end{bmatrix}.
$$
\end{remark}
\begin{proof}
Let $\bfx’ = \{x’_j\}_{j = 1}^n$ be an independent copy of $\bfx = \{x_i\}_{i = 1}^n$. We decouple $\bfF(\bfx)$ by \cref{decoupling},
$$
 E:=\E \norm{\sum_{(i,j) \in \calT^2_n} \bfA_{i,j} x_{i}x_{j}}_{4t}^{4t} \leq 4^{4t} \cdot \E \norm{\sum_{(i,j) \in \calT^2_n} \bfA_{i,j} x_{i} x’_{j}}_{4t}^{4t} .
$$
Since the decoupled $\bfF(\bfx)$ on the right hand side is a function of $\bfx$ and $\bfx’$, the decoupled $\bfF(\bfx)$ can be viewed as taking two arguments instead. Thus we will refer to the decoupled $\bfF(\bfx)$ as $\bfF(\bfx, \bfx’)$. 

By the law of total expectation,
$$
E ~\leq ~ 4^{4t} \cdot \E \left( \E_{\bfx} \left( \norm{\sum_{i=1}^n x_{i}\left( \sum_{j=1}^n  \bfA_{i,j} x_{j}’ \right)}_{4t}^{4t} ~ \Biggr \rvert ~ \bfx’ \right) \right).
$$
Since $\bfx $ and $\bfx’$ are independent, if we fix $\bfx’$, the summation becomes a sum of centered, (conditionally) independent random matrices in $\bfx$. So we can apply matrix Rosenthal inequality (\cref{rosenthal}) and use the fact that $\E x_i^2= 1$, and $x_i$’s are independent to get
\begin{align} 
    E~ &  \leq~  4^{4t} (16t)^{3t} \cdot \E \norm{ \left(\sum_{i=1}^n \left( \sum_{j=1}^n  \bfA_{i,j} x’_{j} \right) \trans{\left( \sum_{j=1}^n  \bfA_{i,j} x’_{j} \right)}\right)^{1/2}}_{4t}^{4t} \label{E1} \\ & +  4^{4t} (16t)^{3t}\cdot \E \norm {\left( \sum_{i=1}^n \trans{\left( \sum_{j=1}^n  \bfA_{i,j} x’_{j} \right)} \left( \sum_{j=1}^n  \bfA_{i,j} x’_{j} \right) \right)^{1/2} }_{4t}^{4t}  \label{E2} \\
    &+  4^{4t} (8t)^{4t} L^{4t} \cdot \sum_{i=1}^n  \E \norm { \sum_{j=1}^n  \bfA_{i,j} x’_{j}}_{4t}^{4t}. \label{E3}
\end{align}
Recall that $\bfF(\bfx, \bfx’)$ refers to the decoupled $\bfF(\bfx)$. The expression in (\ref{E1}) satisfies
\begin{align} \label{obs}
    \sum_{i=1}^n \left( \sum_{j=1}^n  \bfA_{i,j} x’_{j} \right) \trans{\left( \sum_{j=1}^n  \bfA_{i,j} x’_{j} \right)} & = \begin{bmatrix}
        \partial_{x_1} \bfF(\bfx, \bfx’) & \dots & \partial_{x_n} \bfF(\bfx, \bfx’)
    \end{bmatrix}\begin{bmatrix}
        \partial_{x_1} \trans{\bfF}(\bfx, \bfx’) \\ \vdots \\ \partial_{x_n} \trans{\bfF}(\bfx, \bfx’) 
    \end{bmatrix}.
\end{align}
Notice that the two sides are merely equal to each other in a formal way. We didn’t actually take the partial derivatives since $x_i$’s could be discrete random variables. But conceptually, it is very convenient to think of this step as taking partial derivatives in $\bfx$. 

Following (\ref{obs}) and using our notation for partial derivative block matrices introduced in \cref{partial}, we can write (\ref{E1}) compactly as
\begin{align} 
   & \E \norm{\left(\sum_{i=1}^n \left( \sum_{j=1}^n  \bfA_{i,j} x’_{j} \right) \trans{\left( \sum_{j=1}^n  \bfA_{i,j} x’_{j} \right)}\right)^{1/2}}^{4t}_{4t} \notag \\ = & \E \norm{\left(\begin{bmatrix}
        \partial_{x_1} \bfF(\bfx, \bfx’) & \dots & \partial_{x_n} \bfF(\bfx, \bfx’)
    \end{bmatrix}\begin{bmatrix}
        \partial_{x_1} \trans{\bfF}(\bfx, \bfx’) \\ \vdots \\ \partial_{x_n} \trans{\bfF}(\bfx, \bfx’) 
    \end{bmatrix}\right)^{1/2}}^{4t}_{4t} = \E \norm{\bfF_{0,1,0}(\bfx’)}^{4t}_{4t}. \label{obs1}
\end{align}
Similarly, (\ref{E2}) and (\ref{E3}) can be written compactly as
\begin{align}  
    & \E \norm{\left(\sum_{i=1}^n \trans{\left( \sum_{j=1}^n  \bfA_{i,j} x’_{j} \right)} \left( \sum_{j=1}^n  \bfA_{i,j} x’_{j} \right)\right)^{1/2}}_{4t}^{4t}  \notag \\ = & \E \norm{\left(\begin{bmatrix}
        \partial_{x_1} \trans{\bfF}(\bfx, \bfx’) & \dots & \partial_{x_n} \trans{\bfF}(\bfx, \bfx’)
    \end{bmatrix}\begin{bmatrix}
        \partial_{x_1} \bfF(\bfx, \bfx’) \\ \vdots \\ \partial_{x_n} \bfF(\bfx, \bfx’) 
    \end{bmatrix}\right)^{1/2}}_{4t}^{4t} = \E \norm{\bfF_{1,0,0}(\bfx’)}^{4t}_{4t}, \label{obs2}
\end{align}
and
\begin{align}
&\sum_{i=1}^n \E \norm{\sum_{j=1}^n  \bfA_{i,j} x’_{j}}_{4t}^{4t} \notag \\ = &\E \tr \begin{bmatrix}
\partial_{x_1} \bfF(\bfx, \bfx’) \cdot \partial_{x_1} \trans{\bfF}(\bfx, \bfx’) & & \\ & \ddots & \\ & & \partial_{x_n} \bfF(\bfx, \bfx’) \cdot \partial_{x_n} \trans{\bfF}(\bfx, \bfx’) 
\end{bmatrix}^{2t} =  \E \norm{\bfF_{0,0,1}(\bfx’)}^{4t}_{4t}.  \label{obs3} 
\end{align}

Hence the inequality (\ref{E1}-\ref{E3}) can be written succinctly as
$$
E \leq  4^{4t} (16t)^{3t} \cdot \left( \E \norm{\bfF_{1,0,0}}_{4t}^{4t} + \E \norm{\bfF_{0,1,0}}_{4t}^{4t}  \right) +  4^{4t} (8t)^{4t}L^{4t} \cdot \E \norm{\bfF_{0,0,1}}_{4t}^{4t}.
$$

Since $\bfF(\bfx, \bfx’)$ is multilinear and we have already taken the partial derivatives in $\bfx$, $\bfF_{1,0,0}$, $\bfF_{0,1,0}$, and $\bfF_{0,0,1}$ are sums of centered, independent random matrices in $\bfx’$; \ie
$$\bfF_{1,0,0} (\bfx’) = \sum_{j = 1}^n x_j’\trans{\begin{bmatrix}
\trans{\bfA}_{1,j} & \dots & \trans{\bfA}_{n,j}
\end{bmatrix}},$$
$$\bfF_{0,1,0} (\bfx’) = \sum_{j = 1}^n x_j’\begin{bmatrix}
\bfA_{1,j} & \dots & \bfA_{n,j}
\end{bmatrix},$$ and 
$$\bfF_{0,0,1} (\bfx’) = \sum_{j = 1}^n x_j’\begin{bmatrix}
\bfA_{1,j} \\ & \ddots \\ && \bfA_{n,j}
\end{bmatrix}.$$ 
Therefore, we can apply matrix Rosenthal inequality (\cref{rosenthal}) again to bound $\E \norm{\bfF_{1,0,0}}^{4t}_{4t}$, $\E \norm{\bfF_{0,1,0}}^{4t}_{4t}$, and $\E \norm{\bfF_{0,0,1}}^{4t}_{4t}$. Observe that
$$
\begin{bmatrix}
   \partial_{x’_1} \bfF_{0,1,0} \\ \vdots \\ \partial_{x’_n} \bfF_{0,1,0}  
\end{bmatrix} = \begin{bmatrix}
    \bf 0 & \bfA_{2,1} & \dots & \bfA_{n,1} \\
    \bfA_{1,2} & \bf 0 & \dots & \bfA_{n,2} \\
    \vdots & \vdots & \ddots & \vdots \\
     \bfA_{1,n} & \bfA_{2,n} & \dots & \bf 0 
\end{bmatrix} =
\begin{bmatrix}
    \bf 0 & \bfA_{1,2} & \dots & \bfA_{1,n} \\
    \bfA_{2,1} & \bf 0 & \dots & \bfA_{2,n} \\
    \vdots & \vdots & \ddots & \vdots \\
     \bfA_{n,1} & \bfA_{n,2} & \dots & \bf 0 
\end{bmatrix} = \trans{\begin{bmatrix}
   \partial_{x’_1} \trans{\bfF}_{1,0,0} \\ \vdots \\ \partial_{x’_n} \trans{\bfF}_{1,0,0}  
\end{bmatrix}} = \bfF_{1,1,0},
$$
where the second equality is due to the permutation symmetric property of $\bfF(\bfx)$, \ie $\bfA_{i,j} = \bfA_{j,i}$, $\forall (i,j) \in \calT^2_n$. After applying two rounds of matrix Rosenthal inequality (\cref{rosenthal}), we have
$$
E < 2 \cdot 4^{4t} (16t)^{8t} \cdot \left(\norm{\bfF_{2,0,0}}_{4t}^{4t} + \norm{\bfF_{0,2,0}}_{4t}^{4t} + \norm{\bfF_{1,1,0}}_{4t}^{4t} +  L^{4t}\norm{\bfF_{1,0,1}}_{4t}^{4t} +  L^{4t}\norm{\bfF_{0,1,1}}_{4t}^{4t} +  L^{8t}\norm{\bfF_{0,0,2}}_{4t}^{4t}\right).
$$
We complete the proof by observing that $\E \bfF(\bfx) = {\bf 0}$ due to multilinearity of $\bfF(\bfx)$. 
\end{proof}

In general, differentiating the decoupled $\bfF(\bfx, \bfx’)$ with respect to $\bfx$ and $\bfx’$ can be quite different from taking second order derivatives of $\bfF(\bfx)$ with respect to $\bfx$. For example, it is clear that,
$$
\partial_{x_1}\partial_{x_2} \begin{bmatrix}
    0 & x_1x_2 \\ 0 & 0
\end{bmatrix} = \begin{bmatrix}
    0 & 1 \\ 0 & 0
\end{bmatrix} = \partial_{x_2}\partial_{x_1} \begin{bmatrix}
    0 & x_1x_2 \\ 0 & 0
\end{bmatrix},
$$
since $\partial x_1$ and $\partial x_2$ commute. But taking partial derivatives after decoupling gives,
$$
\partial_{x_1}\partial_{x’_2} \begin{bmatrix}
    0 & x_1x_2’ \\ 0 & 0
\end{bmatrix} = \begin{bmatrix}
    0 & 1 \\ 0 & 0
\end{bmatrix} \neq \begin{bmatrix}
    0 & 0 \\ 0 & 0
\end{bmatrix} = \partial_{x_2}\partial_{x’_1} \begin{bmatrix}
    0 & x_1x_2’ \\ 0 & 0
\end{bmatrix},
$$
where $x_1$, $x_2$, $x’_1$, $x’_2$ are actually four distinct variables. So one can’t expect the result of taking partial derivatives under two scenarios are the same. We could directly work with this by recording $\partial_{x_1}\partial_{x’_2}\bfF(\bfx)$ and $\partial_{x_2}\partial_{x’_1}\bfF(\bfx)$ separately, but the notation gets cumbersome very quickly. We would prefer to state our results in regular partial derivatives, which is more intuitive. So instead, we write $\bfF(\bfx)$ in the permutation symmetric way. Continuing with our example, we write
$$
\begin{bmatrix}
    0 & x_1x_2 \\ 0 & 0
\end{bmatrix} = \frac{1}{2}\begin{bmatrix}
    0 & x_1x_2 \\ 0 & 0
\end{bmatrix} + \frac{1}{2} \begin{bmatrix}
    0 & x_2x_1 \\ 0 & 0
\end{bmatrix}.
$$
This doesn’t affect the second order derivatives, but notice now what we get taking partial derivatives after decoupling,
$$
\partial_{x_1}\partial_{x’_2} \left( \frac{1}{2}\begin{bmatrix}
    0 & x_1x’_2 \\ 0 & 0
\end{bmatrix} + \frac{1}{2} \begin{bmatrix}
    0 & x_2x’_1 \\ 0 & 0
\end{bmatrix} \right) = \frac{1}{2} \begin{bmatrix}
    0 & 1 \\ 0 & 0
\end{bmatrix} = \partial_{x_2}\partial_{x’_1} \left( \frac{1}{2}\begin{bmatrix}
    0 & x_1x’_2 \\ 0 & 0
\end{bmatrix} + \frac{1}{2} \begin{bmatrix}
    0 & x_2x’_1 \\ 0 & 0
\end{bmatrix} \right).
$$
Therefore, the permutation symmetric property of $\bfF(\bfx)$ makes the resulting partial derivative matrices $\{\bfF_{a,b,c}\}$ under two scenarios differ only by a constant factor. This also applies to $\bfF(\bfx)$ of higher degree.

\begin{claim} \label{recurse rosenthal}
    Let $\bfF(\bfx)$ be a homogeneous multilinear polynomial random matrix of degree $D$. Let $\bfF(\bfx^{(1)}, \dots, \bfx^{(D)})$ be the decoupled $\bfF(\bfx)$, \ie
    $$
    \bfF(\bfx^{(1)}, \dots, \bfx^{(D)}) = \sum_{{\bf j} \in \calT^D_n} \bfA_{\bf j} \cdot x_{j_1}^{(1)} \cdots x_{j_D}^{(D)},
    $$ 
    where $\{\bfA_{\bf k}\}_{{\bf k}\in \calT^D_n}$ is a multi-indexed sequence of deterministic matrices of the same dimension.
    For some fixed $a,b,c\in \Z_{\geq 0}$ and $k = a+b+c <D$, let $\bfF_{a,b,c}$ be the block matrix of $k$-th order partial derivatives of $\bfF(\bfx^{(1)}, \dots, \bfx^{(D)})$. Then $\bfF_{a,b,c}$ is a homogeneous multilinear polynomial random matrix of degree $d = D-k$. Furthermore,
    $$
    \E \norm{\bfF_{a,b,c}}_{4t}^{4t} \leq (16t)^{3t} \cdot \left( \E \norm{{\bfF_{a+1, b, c}}}_{4t}^{4t} + \E \norm{\bfF_{a,b+1,c}}_{4t}^{4t}\right) + (8t)^{4t} \cdot \E \norm{\bfF_{a,b,c+1}}_{4t}^{4t}.
    $$    
\end{claim}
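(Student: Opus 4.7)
My plan is to mirror the proof of the quadratic case (\cref{quadratic}) and apply the non-Hermitian matrix Rosenthal inequality (\cref{rosenthal}) to $\bfF_{a,b,c}$ viewed as a linear form in one of its remaining free variable copies. First I would verify the multilinearity and degree claim: since the decoupled matrix $\bfF(\bfx^{(1)}, \ldots, \bfx^{(D)})$ consists of monomials each containing exactly one variable from each of the $D$ copies, differentiating with respect to variables drawn from $k$ distinct copies strips exactly $k$ degrees from every surviving monomial and annihilates the rest. The resulting $\bfF_{a,b,c}$ is therefore multilinear of degree $d = D-k$ in the other $d$ copies, and its block layout is well-defined up to the Schatten-norm-preserving reorderings noted in \cref{sec:prelims}.

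Next I would isolate one of the $d$ remaining free copies, call it $\bfx^{(l)}$, and use multilinearity in that copy to expand
$$
\bfF_{a,b,c} \;=\; \sum_{m=1}^n x_m^{(l)} \cdot \mG_m, \qquad \mG_m \;:=\; \partial_{x_m^{(l)}} \bfF_{a,b,c},
$$
where each $\mG_m$ depends only on the remaining $d-1$ copies and is therefore independent of $\bfx^{(l)}$. Conditioning on every copy except $\bfx^{(l)}$, the summands $\mathbf{Y}_m := x_m^{(l)} \mG_m$ are centered (since $\E x_m^{(l)} = 0$) and conditionally independent across $m$, so \cref{rosenthal} applies conditionally. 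Using $\E (x_m^{(l)})^2 = 1$, I would then match the three terms in the Rosenthal bound to $\bfF_{a,b+1,c}$, $\bfF_{a+1,b,c}$, and $\bfF_{a,b,c+1}$ exactly as in the identifications (\ref{obs1})--(\ref{obs3}) of \cref{quadratic}: the horizontal-stack identity $\sum_m \mG_m \trans{\mG_m} = \bfF_{a,b+1,c} \trans{\bfF_{a,b+1,c}}$ converts the first Rosenthal term into $\E \norm{\bfF_{a,b+1,c}}_{4t}^{4t}$; the vertical-stack identity yields $\E \norm{\bfF_{a+1,b,c}}_{4t}^{4t}$ symmetrically; and the block-diagonal identity $\norm{\bfF_{a,b,c+1}}_{4t}^{4t} = \sum_m \norm{\mG_m}_{4t}^{4t}$ matches the third Rosenthal term (absorbing the factor $\E (x_m^{(l)})^{4t} \le L^{4t}$, which is tracked externally when the recursion is iterated in the main theorem). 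Taking the outer expectation over the remaining copies then yields the stated bound.

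The content of this claim is essentially notation-heavy bookkeeping rather than a deep technical step. The main obstacle is simply confirming that the three block structures produced by Rosenthal coincide with the vertical, horizontal, and diagonal concatenations in the recursive definition of $\bfF_{a,b,c}$, for every $(a,b,c)$ and independently of which free copy $\bfx^{(l)}$ was singled out. The permutation-symmetric representation of the coefficients $\bfA_{\bfj}$ (as discussed around \cref{quadratic}) ensures that different choices of the singled-out copy produce partial-derivative matrices that differ only by Schatten-norm-preserving reindexings, so the recursion is unambiguous and the induction closes cleanly.
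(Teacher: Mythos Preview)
Your proposal is correct and follows essentially the same approach as the paper: linearize $\bfF_{a,b,c}$ in one remaining decoupled copy, apply the non-Hermitian matrix Rosenthal inequality (\cref{rosenthal}) conditionally, and identify the three resulting terms with $\bfF_{a+1,b,c}$, $\bfF_{a,b+1,c}$, and $\bfF_{a,b,c+1}$ via the block-matrix identities exactly as in (\ref{obs1})--(\ref{obs3}). The paper simply fixes the copy to be $\bfx^{(d)}$ without loss of generality rather than discussing an arbitrary $\bfx^{(l)}$, and it writes out the intermediate coefficient matrices $\bfB_{\bfi}$ explicitly, but the substance is identical; your observation about the $L^{4t}$ factor is also consistent with how the paper tracks it when assembling \cref{homo multilinear recursion}.
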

\begin{proof} Without loss of generality, we differentiate $\bfF(\bfx^{(1)}, \dots, \bfx^{(D)})$ with respect to $\bfx^{(D)}$ first, then $\bfx^{(D-1)}$, and so on. It is straightforward to see that after $k = a+b+c$ rounds of differentiation, $\bfF_{a,b,c}(\bfx^{(1)}, \dots, \bfx^{(d)})$ is a homogeneous polynomial random matrix of degree $d = D-k$. And
$$
\bfF_{a,b,c}(\bfx^{(1)}, \dots, \bfx^{(d)}) = \sum_{\bfi \in \calT^d_n} \bfB_{\bfi} \cdot x_{i_1}^{(1)} \cdots x_{i_d}^{(d)}, 
$$
where $\{\bfB_{\bfi}\}_{\bfi \in \calT^d_n}$ is a multi-indexed sequence of deterministic matrices of the same dimension. More specifically, each $\bfB_{\bfi}$ is a block matrix whose blocks consist of $\{ \bfA_{{\bf j}}\}_{{\bf j} \in \calT^D_n}$ such that $j_1 = i_1, \dots, j_d = i_d$. 

Conditioned on $\bfx^{(1)}, \dots, \bfx^{(d-1)}$, $ \bfF_{a,b,c}(\bfx^{(1)}, \dots, \bfx^{(d)})$ is a sum of centered, (conditionally) independent random matrices in $\bfx^{(d)}$. It follows that
\begin{align*}
& \E \left( \norm{\bfF_{a,b,c}(\bfx^{(1)}, \dots, \bfx^{(d)})}^{4t}_{4t} ~ \Biggr \rvert ~ \bfx^{(1)}, \dots \bfx^{(d-1)}\right) \\
= & ~ \E \left(\E_{\bfx^{(d)}} \left( \norm{\sum_{\bfi \in \calT^d_n} \bfB_{\bfi} \cdot x_{i_1}^{(1)} \cdots x_{i_d}^{(d)}}^{4t}_{4t} ~ \Biggr \rvert ~ \bfx^{(1)}, \dots \bfx^{(d-1)} \right) \right) \\
\leq & ~ (16t)^{3t} \cdot \E \norm{ \left(\sum_{i_d=1}^n \left(\sum_{\bfi \in \calT^d_n} \bfB_{\bfi} \cdot x_{i_1}^{(1)} \cdots x_{i_{d-1}}^{(d-1)}\right) \trans{\left( \sum_{\bfi \in \calT^d_n} \bfB_{\bfi} \cdot x_{i_1}^{(1)} \cdots x_{i_{d-1}}^{(d-1)} \right)}\right)^{1/2}}_{4t}^{4t} \\ 
& + (16t)^{3t}\cdot \E \norm {\left( \sum_{i_d =1}^n \trans{\left( \sum_{\bfi \in \calT^d_n}  \bfB_{\bfi} \cdot x_{i_1}^{(1)} \cdots x_{i_{d-1}}^{(d-1)} \right)} \left( \sum_{\bfi \in \calT^d_n} \bfB_{\bfi} \cdot x_{i_1}^{(1)} \cdots x_{i_{d-1}}^{(d-1)} \right) \right)^{1/2} }_{4t}^{4t}  \\
& + (8t)^{4t} L^{4t} \cdot \sum_{i_d=1}^n  \E \norm { \sum_{\bfi \in \calT^d_n}  \bfB_{\bfi} \cdot x_{i_1}^{(1)} \cdots x_{i_{d-1}}^{(d-1)}}_{4t}^{4t},
\end{align*}
where the inequality is due to matrix Rosenthal inequality (\cref{rosenthal}). Similar to observations (\ref{obs1}), (\ref{obs2}), and (\ref{obs3}) in the proof of \cref{quadratic}, we have
\begin{align*} 
   & \E \norm{ \left(\sum_{i_d=1}^n \left(\sum_{\bfi \in \calT^d_n} \bfB_{\bfi} \cdot x_{i_1}^{(1)} \cdots x_{i_{d-1}}^{(d-1)}\right) \trans{\left( \sum_{\bfi \in \calT^d_n} \bfB_{\bfi} \cdot x_{i_1}^{(1)} \cdots x_{i_{d-1}}^{(d-1)} \right)}\right)^{1/2}}_{4t}^{4t} \\ = & ~\E \norm{\left(\begin{bmatrix}
        \partial_{x_1^{(d)}} \bfF_{a,b,c}  & \dots & \partial_{x_n^{(d)}} \bfF_{a,b,c}
    \end{bmatrix}\begin{bmatrix}
        \partial_{x_1^{(d)}} \trans{\bfF}_{a,b,c} \\ \vdots \\ \partial_{x_n^{(d)}} \trans{\bfF}_{a,b,c}
    \end{bmatrix}\right)^{1/2}}^{4t}_{4t} = \E \norm{\bfF_{a,b+1,c}}^{4t}_{4t},
\end{align*}
\begin{align*}  
    & \E \norm {\left( \sum_{i_d =1}^n \trans{\left( \sum_{\bfi \in \calT^d_n}  \bfB_{\bfi} \cdot x_{i_1}^{(1)} \cdots x_{i_{d-1}}^{(d-1)} \right)} \left( \sum_{\bfi \in \calT^d_n} \bfB_{\bfi} \cdot x_{i_1}^{(1)} \cdots x_{i_{d-1}}^{(d-1)} \right) \right)^{1/2} }_{4t}^{4t}   \\ = & ~ \E \norm{\left(\begin{bmatrix}
        \partial_{x_1^{(d)}} \trans{\bfF}_{a,b,c} & \dots & \partial_{x_n^{(d)}} \trans{\bfF}_{a,b,c}
    \end{bmatrix}\begin{bmatrix}
        \partial_{x_1^{(d)}} \bfF_{a,b,c} \\ \vdots \\ \partial_{x_n^{(d)}} \bfF_{a,b,c} 
    \end{bmatrix}\right)^{1/2}}_{4t}^{4t} = \E \norm{\bfF_{a+1,b,c}}^{4t}_{4t}, 
\end{align*}
and
\begin{align*}
&\sum_{i_d=1}^n  \E \norm { \sum_{\bfi \in \calT^d_n}  \bfB_{\bfi} \cdot x_{i_1}^{(1)} \cdots x_{i_{d-1}}^{(d-1)}}_{4t}^{4t} \\ = & ~ \E \tr \begin{bmatrix}
\partial_{x_1^{(d)}} \bfF_{a,b,c} \cdot \partial_{x_1^{(d)}} \trans{\bfF}_{a,b,c} & & \\ & \ddots & \\ & & \partial_{x_n^{(d)}} \bfF_{a,b,c} \cdot \partial_{x_n^{(d)}} \trans{\bfF}_{a,b,c}
\end{bmatrix}^{2t} =  \E \norm{\bfF_{a,b,c+1}}^{4t}_{4t}. 
\end{align*}
Hence, the result follows.
\end{proof}

We present a moment bound for permutation symmetric, homogeneous multilinear polynomial random matrices of degree $d$. 

\begin{theorem} [Homogeneous Multilinear Recursion]
\label{homo multilinear recursion}
Let $\bfx = \{x_i\}_{i = 1}^n$ be a sequences of i.i.d random variables with $\E x_i= 0$,  $\E x_i^2= 1$ and $|x_i| \leq L$ for all $1\leq i \leq n$. Let $\{\bfA_{\bfi}\}_{\bfi \in \calT^d_n}$ be a multi-indexed sequence of deterministic matrices of the same dimension. Define a permutation symmetric, homogeneous multilinear polynomial random matrix of degree $d$ as
$$
\bfF(\bfx) =  \sum_{\bfi \in \calT^d_n} \left( \bfA_{\bfi} \cdot \prod_{j \in \{i_1,\dots, i_d \}} x_j\right).
$$
Let $a,b,c\in \Z_{\geq 0}$ and $d = a+b+c$. Then for $2 \leq t \leq \infty,$
$$
\E \norm{\bfF(\bfx) - \E \bfF(\bfx)}_{4t}^{4t} ~\leq~ \sum_{\substack{a,b,c: \\ a+b+c=d}} (48dt)^{4dt} \cdot L^{4ct} \norm{\bfF_{a,b,c}}_{4t}^{4t}.
$$
\end{theorem}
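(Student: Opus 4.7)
The plan is to combine a single decoupling step with $d$ iterative applications of the Rosenthal-based recursion from \cref{recurse rosenthal}. To begin, observe that since $\bfF(\bfx)$ is homogeneous multilinear of degree $d \geq 1$ with $\E x_i = 0$, every monomial has mean zero, so $\E \bfF(\bfx) = \mat{0}$ and $\E \norm{\bfF(\bfx) - \E \bfF(\bfx)}_{4t}^{4t} = \E \norm{\bfF(\bfx)}_{4t}^{4t}$. Then I would apply \cref{generic decoupling} with $\Phi(y) = y^{4t}$ and the Schatten norm $\norm{\cdot}_{4t}$, using the improved multilinear constant $C_d = d^d$ (equivalently, invoke \cref{decoupling} with the same Jensen-based argument applied to $\Phi(y) = y^{4t}$) to obtain
$$
\E \norm{\bfF(\bfx)}_{4t}^{4t} ~\leq~ d^{4dt} \cdot \E \norm{\bfF(\bfx^{(1)}, \ldots, \bfx^{(d)})}_{4t}^{4t}\mcom
$$
where $\bfx^{(1)}, \ldots, \bfx^{(d)}$ are independent copies of $\bfx$.

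Next, I would unfold \cref{recurse rosenthal} exactly $d$ times. At each step, a current block matrix $\bfF_{a,b,c}$ with $a+b+c < d$, viewed as a sum of conditionally independent centered random matrices in the ``next'' independent copy, is bounded via the matrix Rosenthal inequality (\cref{rosenthal}) by a combination of $\bfF_{a+1,b,c}$, $\bfF_{a,b+1,c}$, and $\bfF_{a,b,c+1}$. A careful tracking of the Rosenthal proof (as done explicitly in the quadratic case, \cref{quadratic}) shows that the diagonal ``$c+1$'' branch additionally carries a factor of $L^{4t}$ arising from the bound $|x_i| \leq L$ on the third Rosenthal term. After $d$ rounds, every surviving index triple $(a,b,c)$ satisfies $a+b+c = d$, at which point $\bfF_{a,b,c}$ is deterministic, so the expectation disappears. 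Grouping the $3^d$ leaves of the recursion tree by their final $(a,b,c)$ (each triple reached with multiplicity $\binom{d}{a,b,c} \leq 3^d$), the cumulative $L$-power along a branch with $c$ diagonal steps is exactly $L^{4ct}$, matching the statement.

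Finally, I would collect constants. Per iteration, the worst per-branch factor is $\max\{(16t)^{3t},\, (8t)^{4t}\} \leq (16t)^{4t}$ (valid for $t \geq 2$, since $16t \geq 1$), so accumulating across $d$ rounds contributes at most $(16t)^{4dt}$. Combining this with the multinomial absorption $3^d \leq 3^{4dt}$ and the decoupling factor $d^{4dt}$ yields a constant bounded by $(d \cdot 16t \cdot 3)^{4dt} = (48 d t)^{4dt}$, exactly as stated.

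The main subtlety I expect is not the arithmetic but the bookkeeping at each recursive step: one must verify that $\bfF_{a,b,c}$, viewed as a function of $\bfx^{(k)}$ (with $\bfx^{(1)}, \ldots, \bfx^{(k-1)}$ frozen), is still a sum of conditionally independent, centered, \emph{linear} random matrices in the entries of $\bfx^{(k)}$, so that \cref{rosenthal} applies. Permutation symmetry of $\bfF(\bfx)$ plays a crucial role here, as it guarantees that the block matrices $\bfF_{a,b,c}$ are well-defined (up to Schatten norm equality) independent of the order of differentiation, as discussed in \cref{sec:prelims}. Once this invariant is established inductively, the proof reduces to the same mechanics already carried out in the quadratic case.
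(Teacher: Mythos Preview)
Your proposal is correct and follows essentially the same route as the paper: decouple via \cref{decoupling} (with $\Phi(y)=y^{4t}$, giving the factor $d^{4dt}$), then iterate \cref{recurse rosenthal} $d$ times, group the $\binom{d}{a,b,c}\le 3^d$ leaves per triple, and absorb constants into $(48dt)^{4dt}$. You are also right that the diagonal branch carries an extra $L^{4t}$ per step (this appears in the proof of \cref{recurse rosenthal} even though it is omitted from its statement), and that permutation symmetry is what makes the $\bfF_{a,b,c}$ notation coherent up to Schatten norm.
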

\begin{proof}
Let $\bfx^{(1)}, \dots, \bfx^{(d)}$ be $d$ independent copies of $\bfx$. We decouple $\bfF(\bfx)$ by \cref{decoupling},
$$
 E:=\E \norm{\bfF((\bfx)}_{4t}^{4t} ~\leq ~ d^{4dt} \cdot \E \norm{\bfF(\bfx^{(1)}, \dots, \bfx^{(d)})}_{4t}^{4t} .
$$
We take the partial derivative with respect to $\bfx^{(d)}$ by applying \cref{rosenthal} to get,
$$
E ~ \leq ~  d^{4dt} (16t)^{3t} \cdot \left( \E \norm{\bfF_{1,0,0}}_{4t}^{4t} + \E \norm{\bfF_{0,1,0}}_{4t}^{4t}  \right) +  d^{4dt} (8t)^{4t}L^{4t} \cdot \E \norm{\bfF_{0,0,1}}_{4t}^{4t}.
$$
Note that $\bfF_{1,0,0}$, $\bfF_{0,1,0} $, and $\bfF_{0,0,1}$ are functions in variables $\bfx^{(1)}, \dots, \bfx^{(d-1)}$. We take the partial derivative with respect to the rest of the variables until $\bfF_{a,b,c}$’s become deterministic matrices. Apply \cref{recurse rosenthal} recursively and use the permutation symmetric property of $\bfF(\bfx)$, we have
$$
E ~ \leq ~ \sum_{\substack{a,b,c: \\ a+b+c=d}} (16dt)^{4dt} \cdot L^{4ct} \frac{d!}{a!b!c!} \norm{\bfF_{a,b,c}}_{4t}^{4t}.
$$
Since $\frac{d!}{a!b!c!} \leq \frac{d!}{(d/3)!(d/3)!(d/3)!}$ and $\frac{d!}{(d/3)!(d/3)!(d/3)!} \sim \frac{3\sqrt{3}}{2\pi d}\cdot 3^d$,
$$
E ~ \leq ~ \sum_{\substack{a,b,c: \\ a+b+c=d}} (48dt)^{4dt} \cdot L^{4ct} \norm{\bfF_{a,b,c}}_{4t}^{4t}.
$$
\end{proof}

As a corollary, we derive a moment bound for general multilinear polynomial random matrices of degree $D$. The polynomials are split into homogeneous parts and each parts are bounded separately. Let $\bfF^{=d}(\bfx)$ denote the degree-$d$ homogeneous part of $\bfF(\bfx)$.

We will need the following inequality.
\begin{lemma}[Trace Inequality, see \cite{SA13} Theorem 3.1] \label{trace inequality}
    Let $\bfA_i \in \mathbb{R}^{n\times n}$ for $i=1,2,\dots, m$ and $r\geq 1$. Then
    $$
    \tr \Bigg \vert \sum_{i=1}^m \bfA_i \Bigg \vert ^r ~ \leq ~ m^{r-1} \tr \left(\sum_{i=1}^m |\bfA_i|^r\right),
    $$
    where $|\bfA_i| = (\trans{\bfA_i}\bfA_i)^{1/2}$.
\end{lemma}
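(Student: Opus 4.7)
The plan is to prove this as a two-step consequence of the triangle inequality for the Schatten $r$-norm combined with convexity of $x \mapsto x^r$ for $r \geq 1$. Recall that for a (not necessarily Hermitian) matrix $\bfA$, the Schatten $r$-norm is $\norm{\bfA}_{S_r} := (\tr |\bfA|^r)^{1/r} = (\tr (\trans{\bfA}\bfA)^{r/2})^{1/r}$, so rewriting both sides of the claim in these terms reduces the statement to
\[
\norm{\textstyle\sum_{i=1}^m \bfA_i}_{S_r}^r ~\leq~ m^{r-1} \cdot \sum_{i=1}^m \norm{\bfA_i}_{S_r}^r \mper
\]

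First, I would invoke the fact that for $r \geq 1$ the Schatten $r$-norm is a genuine norm on real matrices (this is the classical Schatten--von Neumann result; it follows, for example, from the variational / duality characterization $\norm{\bfA}_{S_r} = \sup_{\norm{\bfB}_{S_{r'}} \leq 1} \tr(\trans{\bfA}\bfB)$ where $r'$ is the conjugate exponent). The triangle inequality then gives
\[
\norm{\textstyle\sum_{i=1}^m \bfA_i}_{S_r} ~\leq~ \sum_{i=1}^m \norm{\bfA_i}_{S_r} \mper
\]

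Second, I would raise both sides to the $r$-th power and apply the power-mean (Jensen) inequality to the convex function $x \mapsto x^r$ on $[0,\infty)$, which yields $\bigl(\sum_{i=1}^m \norm{\bfA_i}_{S_r}\bigr)^r \leq m^{r-1} \sum_{i=1}^m \norm{\bfA_i}_{S_r}^r$. Chaining this with the previous display and rewriting in terms of traces produces exactly the claimed inequality, since $\norm{\bfA_i}_{S_r}^r = \tr |\bfA_i|^r$ and trace is linear.

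The only real content is the Schatten triangle inequality, which is standard; everything else is scalar convexity. Thus I do not expect an obstacle, but if one wanted to avoid citing the Schatten norm triangle inequality as a black box, the main technical point would be establishing it directly (for instance via the Ky Fan dominance or the noncommutative Hölder inequality), and this would be the only nontrivial ingredient of the proof.
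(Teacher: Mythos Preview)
Your proof is correct. The paper does not actually prove this lemma; it is stated as a citation of \cite{SA13}, Theorem~3.1, and used as a black box. Your two-line argument via the Schatten-$r$ triangle inequality followed by scalar Jensen on $x \mapsto x^r$ is the standard way to see the result and is entirely adequate here.
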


\begin{corollary}[Multilinear Recursion]
\label{multilinear reursion}
Let $\bfx = \{x_i\}_{i = 1}^n$ be a sequences of i.i.d random variables with $\E x_i= 0$,  $\E x_i^2= 1$ and $|x_i| \leq L$ for all $1\leq i \leq n$. Let $\{\bfA_{\bfi}\}_{\bfi \in \calT^d_n}$ be multi-indexed sequences of deterministic matrices of the same dimension. Define a degree-$D$ multilinear polynomial random matrix as
$$
\bfF(\bfx) =  \sum_{d=1}^D ~ \sum_{\bfi \in \calT^d_n} \left( \bfA_{\bfi} \cdot \prod_{j \in \{i_1,\dots, i_d \}} x_j\right).
$$
Suppose $\bfF^{=d}(\bfx)$ is permutation symmetric for $1\leq d \leq D$. Let $a,b,c\in \Z_{\geq 0}$ and $d = a+b+c$. Then for $2 \leq t \leq \infty,$
$$
\E \norm{\bfF(\bfx) - \E \bfF(\bfx)}_{4t}^{4t} ~\leq~ \sum_{d=1}^D  D^{4t} (48dt)^{4dt} \left( \sum_{\substack{a,b,c: \\a+b+c=d }} L^{4ct} \norm{\bfF_{a,b,c}^{=d}}_{4t}^{4t} \right).
$$
\end{corollary}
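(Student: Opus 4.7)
The plan is to reduce this corollary to \cref{homo multilinear recursion} by splitting $\bfF(\bfx)$ into its homogeneous components and handling each component separately. Since the sum defining $\bfF$ starts at $d=1$ and each monomial $\prod_{j \in \{i_1,\dots,i_d\}} x_j$ with $d \geq 1$ has mean zero (by independence and $\E x_i = 0$), we have $\E \bfF(\bfx) = \mathbf{0}$ and consequently
$$
\bfF(\bfx) - \E \bfF(\bfx) ~=~ \bfF(\bfx) ~=~ \sum_{d=1}^D \bfF^{=d}(\bfx).
$$

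The first main step is to peel off the sum over $d$ at the cost of a $D^{4t}$ prefactor. I would apply \cref{trace inequality} with $r = 4t$ and $m = D$ to the matrix $\sum_{d=1}^D \bfF^{=d}(\bfx)$, using the identity $\norm{\bfA}_{4t}^{4t} = \tr |\bfA|^{4t}$. This yields
$$
\norm{\bfF(\bfx) - \E \bfF(\bfx)}_{4t}^{4t} ~=~ \tr\Bigl| \sum_{d=1}^D \bfF^{=d}(\bfx) \Bigr|^{4t} ~\leq~ D^{4t-1} \sum_{d=1}^D \tr \bigl|\bfF^{=d}(\bfx)\bigr|^{4t} ~=~ D^{4t-1} \sum_{d=1}^D \norm{\bfF^{=d}(\bfx)}_{4t}^{4t}.
$$
Taking expectations preserves the inequality termwise.

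The second step is to invoke \cref{homo multilinear recursion} separately on each $\bfF^{=d}(\bfx)$, which is permutation symmetric and homogeneous multilinear of degree $d$ by hypothesis. This gives
$$
\E \norm{\bfF^{=d}(\bfx)}_{4t}^{4t} ~\leq~ \sum_{\substack{a,b,c: \\ a+b+c=d}} (48 d t)^{4dt} \cdot L^{4ct} \norm{\bfF^{=d}_{a,b,c}}_{4t}^{4t}.
$$
Substituting this into the estimate from the previous step and absorbing $D^{4t-1} \leq D^{4t}$ yields precisely the claimed bound.

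There is essentially no technical obstacle: the only delicate point is confirming that the degree-$d$ homogeneous part $\bfF^{=d}$ extracted from $\bfF$ really does satisfy the hypotheses of \cref{homo multilinear recursion} (permutation symmetry is given, and multilinearity is inherited from $\bfF$), and that the trace inequality of \cref{trace inequality} applies in the required form. The factor $D^{4t-1}$ produced by \cref{trace inequality} is dominated by the much larger $(48dt)^{4dt}$ coming from the homogeneous recursion, so no sharper splitting is needed.
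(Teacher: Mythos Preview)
Your proposal is correct and follows essentially the same approach as the paper: split into homogeneous parts, apply the trace inequality (\cref{trace inequality}) to separate the sum over $d$, and invoke \cref{homo multilinear recursion} on each $\bfF^{=d}$. You are slightly more careful than the paper in tracking the constant $D^{4t-1}$ from \cref{trace inequality} before relaxing to $D^{4t}$, and in explicitly noting $\E\bfF = \mathbf{0}$.
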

\begin{proof}
We rewrite $\bfF(\bfx)$ into a formal sum of its homogeneous parts $\bfF(\bfx) = \sum_{d=1}^D \bfF^{=d} (\bfx)$. By the trace inequality (\cref{trace inequality}), we have 
$$\E \norm{\bfF(\bfx) - \E \bfF(\bfx)}_{4t}^{4t} \leq D^{4t}\sum_{d=1}^D  \E \norm{\bfF^{=d}(\bfx) -  \E \bfF^{=d}(\bfx)}_{4t}^{4t}.$$ 
By \cref{homo multilinear recursion},
$$\E \norm{\bfF(\bfx) - \E \bfF(\bfx)}_{4t}^{4t} \leq D^{4t} \sum_{d=1}^D  (48dt)^{4dt} \left( \sum_{\substack{a,b,c: \\a+b+c=d }} L^{4ct} \norm{\bfF_{a,b,c}^{=d}}_{4t}^{4t} \right).$$ 
\end{proof}

\section{Gaussian Polynomial Random Matrices}\label{sec:gaussian}
In this section, we prove a moment bound for polynomial random matrices with Gaussian random variables. While the decoupling technique could work for Gaussian polynomial random matrices as well, we base our recursion framework on the following bound for its simplicity.

\begin{lemma}[Polynomial Moments, see \cite{HT20} Theorem 7.1] \label{polynomial moments}
    Let $\bfH(\bfx) : \Omega \rightarrow \mathbb{H}^d$ be a function with $\bfx \sim \gauss{0}{\mathbb{I}_n}$. For $t=1$ and $t \geq 1.5$,
\begin{equation} \label{poly moment}
    \E \norm{\bfH(\bfx) - \E \bfH(\bfx)}_{2t}^{2t}~ \leq ~ (\sqrt{2}t)^{2t} \cdot \E \tr \left(\sum_{i=1}^n (\partial_i \bfH(\bfx) )^2 \right)^t.
\end{equation}
\end{lemma}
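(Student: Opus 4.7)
The plan is to establish this Gaussian matrix Poincar\'e-type moment bound via an interpolation/integration-by-parts argument, combined with a recursion on the moment order. The overall strategy mirrors the scalar Gaussian Poincar\'e inequality, but care must be taken to respect matrix non-commutativity.

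First, I would set up the Gaussian interpolation. Let $\bfy$ be an independent copy of $\bfx$ and, for $\theta \in [0,\pi/2]$, define the rotations $\bfx_\theta = \cos\theta\, \bfx + \sin\theta\, \bfy$ and $\bfx'_\theta = -\sin\theta\, \bfx + \cos\theta\, \bfy$. By rotational invariance of the standard Gaussian, $(\bfx_\theta, \bfx'_\theta)$ are jointly Gaussian and independent, each $\sim \gauss{0}{\mI_n}$. The fundamental theorem of calculus then gives
\[
\bfH(\bfx) - \Ex\bfH \;=\; -\,\Ex_{\bfy} \int_0^{\pi/2} \sum_{i=1}^n \partial_i \bfH(\bfx_\theta) \cdot (\bfx'_\theta)_i \, d\theta.
\]

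Next, I would expand the $2t$-th moment using cyclicity of the trace and substitute the interpolation identity into one factor:
\[
\Ex \tr\big[(\bfH(\bfx) - \Ex\bfH)^{2t}\big] \;=\; -\,\Ex \tr\!\left[(\bfH(\bfx)-\Ex\bfH)^{2t-1} \cdot \Ex_\bfy \int_0^{\pi/2}\!\! \sum_i \partial_i \bfH(\bfx_\theta)(\bfx'_\theta)_i\, d\theta\right].
\]
At this point, the key step is to apply Gaussian integration by parts in the coordinates of $\bfx'_\theta$ (equivalently, Stein's identity applied to the independent copy $\bfy$ after the rotation). This causes the factor $(\bfx'_\theta)_i$ to be replaced by another derivative $\partial_i$, which can hit either of the two occurrences of $\bfH$ in the integrand. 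Because $\bfx'_\theta$ and $\bfx_\theta$ are independent Gaussians, the derivatives land cleanly on each factor. Iterating over the interpolation parameter and averaging will yield, after some careful bookkeeping with the trace and Jensen's inequality, a bound of the schematic form
\[
\Ex \tr\big[(\bfH - \Ex\bfH)^{2t}\big] \;\le\; C \cdot \Ex \tr\!\left[(\bfH - \Ex\bfH)^{2t-2} \sum_{i=1}^n (\partial_i \bfH)^2\right],
\]
where matrix non-commutativity is handled using a trace Cauchy--Schwarz / H\"older inequality to ``symmetrize'' terms of the form $\tr[A\, \partial_i \bfH\, B\, \partial_i \bfH]$ against $\tr[AB\, (\partial_i \bfH)^2]$.

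Finally, I would apply the matrix H\"older inequality in Schatten norms to split this into $\bigl(\E\tr[(\bfH-\Ex\bfH)^{2t}]\bigr)^{(t-1)/t} \cdot \bigl(\E \tr(\sum_i (\partial_i \bfH)^2)^t\bigr)^{1/t}$, producing a clean self-bounding recursion in the quantity $M_{2t} := \E\norm{\bfH - \Ex\bfH}_{2t}^{2t}$. Solving this recursion, tracking the exact constant from the integration by parts (the $\int_0^{\pi/2}\sin\theta\cos\theta\,d\theta = 1/2$ factor is where the $\sqrt{2}$ shows up), yields the stated bound $M_{2t} \le (\sqrt{2}t)^{2t} \cdot \E\tr(\sum_i(\partial_i\bfH)^2)^t$.

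The main obstacle I expect is step two: organizing the trace expansion so that the derivatives produced by Gaussian integration by parts combine into the single PSD object $\sum_i (\partial_i \bfH)^2$ rather than being distributed between left and right of the $(\bfH-\Ex\bfH)^{2t-2}$ factor in an unusable way. This is essentially the non-commutative analog of the identity $\nabla(f^{2t}) = 2t f^{2t-1} \nabla f$, which fails for matrix $\bfH$, and must be replaced by a matrix H\"older/trace inequality with the correct constants. Once the right trace inequality is identified, the recursion itself is short, and the $t=1$ base case is exactly the matrix Gaussian Poincar\'e inequality.
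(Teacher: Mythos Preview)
The paper does not prove this lemma at all; it is quoted as Theorem~7.1 of \cite{HT20} and used as a black box. So there is no ``paper's own proof'' to compare against, and your proposal is really an attempt to reconstruct the Huang--Tropp argument.

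As such a reconstruction, your outline is broadly on the right track: the Huang--Tropp result is indeed proved via a semigroup/interpolation representation combined with Gaussian integration by parts and a self-bounding recursion in the moment order. Your rotation $(\bfx_\theta,\bfx'_\theta)$ is the standard way to realize the Ornstein--Uhlenbeck semigroup, and the identity $\bfH(\bfx)-\Ex\bfH = -\Ex_\bfy\int_0^{\pi/2}\sum_i\partial_i\bfH(\bfx_\theta)(\bfx'_\theta)_i\,d\theta$ is correct.

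That said, the step you flag as the ``main obstacle'' is genuinely the entire proof, and your sketch does not yet indicate how to close it. After integration by parts, the derivative $\partial_i$ lands on $(\bfH(\bfx)-\Ex\bfH)^{2t-1}$, which by the non-commutative chain rule produces a sum of $2t-1$ terms of the form $\tr\big[(\bfH-\Ex\bfH)^{k}\,\partial_i\bfH(\bfx)\,(\bfH-\Ex\bfH)^{2t-2-k}\,\partial_i\bfH(\bfx_\theta)\big]$ with the two gradient factors evaluated at \emph{different} points $\bfx$ and $\bfx_\theta$. You cannot simply invoke ``trace Cauchy--Schwarz'' to collapse these into $\tr[(\bfH-\Ex\bfH)^{2t-2}\sum_i(\partial_i\bfH)^2]$; the signs and the interleaving matter. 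In \cite{HT20} this is handled by a specific mean-value trace inequality (their Fact~2.4, due to Carlen and collaborators) that controls $\tr[A^p - B^p - p(A-B)B^{p-1}]$ for Hermitian $A,B$, together with convexity of the trace power. Without naming and verifying that inequality, the recursion you want does not follow, and in particular the exact constant $(\sqrt{2}t)^{2t}$ with the restriction $t=1$ or $t\ge 1.5$ comes precisely from the range of validity of that trace bound. Your proposal would need to make this step explicit to be a proof.
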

We derive the non-Hermitian version of \cref{polynomial moments}.
\begin{lemma}[Non-Hermitian Polynomial Moments] \label{non-Hermitian polynomial moments}
    Let $\bfF(\bfx) : \Omega \rightarrow \mathbb{R}^{d_1 \times d_2}$ be a function with $\bfx \sim \gauss{0}{\mathbb{I}_n}$. For $t=1$ and $t \geq 1.5$,
$$
\E \norm{\bfF(\bfx) - \E \bfF(\bfx) }_{2t}^{2t}~ \leq ~ (\sqrt{2}t)^{2t} \cdot \left( \E \norm{\left(\sum_{i=1}^n \partial_i \bfF(\bfx) ~ \partial_i \trans{\bfF} (\bfx) \right)^{1/2}}^{2t}_{2t} + \E \norm{\left(\sum_{i=1}^n \partial_i \trans{\bfF} (\bfx) ~ \partial_i \bfF (\bfx) \right)^{1/2}}_{2t}^{2t} \right).
$$
\end{lemma}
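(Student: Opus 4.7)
The plan is to reduce the non-Hermitian case to the Hermitian case via Hermitian dilation, mirroring exactly the approach used earlier in the paper to derive the non-Hermitian matrix Rosenthal inequality (\cref{rosenthal}) from its Hermitian counterpart. Concretely, I would set $\bfH(\bfx) = \calH(\bfF(\bfx))$ as in \cref{dilation}, apply \cref{polynomial moments} to the Hermitian matrix-valued function $\bfH(\bfx)$, and then unpack both sides using the block-diagonal structure of $\calH(\bfF)^2$.

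For the left-hand side, using the identity $\calH(\bfA)^2 = \begin{bmatrix} \bfA\trans{\bfA} & \bf 0 \\ \bf 0 & \trans{\bfA}\bfA \end{bmatrix}$ applied to $\bfA = \bfF(\bfx) - \E\bfF(\bfx)$, together with the fact that $\calH$ commutes with expectation so $\E\bfH(\bfx) = \calH(\E\bfF(\bfx))$, I would derive that $\E\norm{\bfH(\bfx) - \E\bfH(\bfx)}_{2t}^{2t} = 2\,\E\norm{\bfF(\bfx) - \E\bfF(\bfx)}_{2t}^{2t}$, exactly as was done in the proof of \cref{rosenthal}.

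For the right-hand side, I would compute the partial derivative of the dilation as
\[
\partial_i \bfH(\bfx) = \begin{bmatrix} \bf 0 & \partial_i\bfF(\bfx) \\ \partial_i\trans{\bfF}(\bfx) & \bf 0 \end{bmatrix},
\]
so that $(\partial_i \bfH(\bfx))^2$ is block-diagonal with blocks $\partial_i \bfF(\bfx)\,\partial_i\trans{\bfF}(\bfx)$ and $\partial_i\trans{\bfF}(\bfx)\,\partial_i\bfF(\bfx)$. Summing over $i$ preserves this block structure, so $\tr(\sum_i (\partial_i \bfH(\bfx))^2)^t$ splits as a sum of the two trace-power expressions appearing in the statement, each of which equals a Schatten $2t$-norm raised to the $2t$ power. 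Dividing by the factor of $2$ from the LHS (which only helps the bound) yields the claimed inequality.

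There is no real obstacle here: the proof is mechanical once one has the dilation identities collected in \cref{dilation} and \cref{rosenthal}, and the ``hard'' analytic content is entirely borrowed from \cref{polynomial moments}. The only minor care point is verifying that the block-diagonal form of $\sum_i(\partial_i\bfH)^2$ indeed makes the trace of its $t$-th power additive over the two diagonal blocks (which follows because powers of a block-diagonal matrix remain block-diagonal with blocks equal to the corresponding powers), and that no cross-terms appear when taking powers. After that, the bound follows verbatim.
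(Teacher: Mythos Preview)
Your proposal is correct and follows essentially the same route as the paper: apply \cref{polynomial moments} to the Hermitian dilation $\bfH(\bfx)=\calH(\bfF(\bfx))$, use the block-diagonal form of $\calH(\bfA)^2$ to get the factor $2$ on the left-hand side, and compute $(\partial_i\bfH)^2$ as a block-diagonal matrix so that the trace on the right-hand side splits into the two Schatten-norm terms. The paper's proof is identical in structure, including dropping the helpful factor of $1/2$ at the end.
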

\begin{proof}
Apply Hermitian dilation. The left hand side of (\ref{poly moment}) becomes
$$
 \E \norm {\begin{bmatrix} {\bf 0} & \bfF - \E \bfF \\ \trans{(\bfF - \E \bfF)} & {\bf 0} \end{bmatrix}}^{2t}_{2t} = \E \tr \begin{bmatrix} (\bfF - \E \bfF) \trans{(\bfF - \E \bfF)} &  {\bf 0} \\ {\bf 0} & \trans{(\bfF - \E \bfF)} (\bfF - \E \bfF) \end{bmatrix}^{t} = 2 \E \norm{\bfF - \E \bfF}_{2t}^{2t},
$$  
and the right hand side of (\ref{poly moment}) becomes
\begin{align*}
   \E \tr \left(\sum_{i=1}^n \begin{bmatrix}
  {\bf 0} &  \partial_i \bfF \\ \partial_i \trans{\bfF} & {\bf 0} 
\end{bmatrix}^2 \right)^t &= \E \tr \begin{bmatrix}
  \sum_{i=1}^n \partial_i \bfF ~ \partial_i \trans{\bfF} & {\bf 0} \\ {\bf 0}  & \sum_{i=1}^n \partial_i \trans{\bfF} ~ \partial_i \bfF
\end{bmatrix}^t \\
&=  \E \norm{\left(\sum_{i=1}^n \partial_i \bfF ~ \partial_i \trans{\bfF} \right)^{1/2}}^{2t}_{2t} + \E \norm{\left(\sum_{i=1}^n \partial_i \trans{\bfF} ~ \partial_i \bfF \right)^{1/2}}_{2t}^{2t}.
\end{align*}
Combine both sides and the result follows.
\end{proof}

\begin{theorem}[Gaussian Recursion] \label{gaussian recursion}
 Let $\bfx \sim \gauss{0}{\mathbb{I}_n}$ and $\{\bfA_{\bfi}\}_{\bfi \in [n]^d}$ be multi-indexed sequences of deterministic matrices of the same dimension. Define a degree-$d$ homogeneous Gaussian polynomial random matrix as
$$
\bfP(\bfx) =  \sum_{\bfi \in [n]^d} \left( \bfA_{\bfi} \cdot \prod_{j \in \{i_1,\dots, i_d \}} x_j\right).
$$
Let $a,b \in \Z_{\geq 0}$. Then for $2 \leq t \leq \infty,$
$$
\E \norm{\bfP(\bfx) - \E \bfP(\bfx)}_{2t}^{2t} ~\leq~ (2^d \sqrt{2}t)^{2t} \left( \sum_{1 \leq a+b \leq d}\norm{\E \bfP_{a,b}}_{2t}^{2t} \right).
$$   
\end{theorem}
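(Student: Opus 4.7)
The plan is to iterate the non-Hermitian polynomial moment inequality (\cref{non-Hermitian polynomial moments}) $d$ times, combining it at each level with a triangle-inequality split (via \cref{trace inequality}) to handle the fact that the partial-derivative matrices $\bfP_{a,b}$ are lower-degree Gaussian polynomials that are no longer centered.

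First I would apply \cref{non-Hermitian polynomial moments} directly to $\bfP(\bfx)$ and recognize the Schatten norms on the right-hand side as exactly $\E\|\bfP_{0,1}\|_{2t}^{2t}$ and $\E\|\bfP_{1,0}\|_{2t}^{2t}$, using the block identification in \cref{partial} (the Schatten $2t$ norm of the horizontal block $[\partial_{x_1}\bfP,\dots,\partial_{x_n}\bfP]$ equals that of $(\sum_i \partial_i\bfP\,\partial_i \trans{\bfP})^{1/2}$, and similarly for the vertical arrangement). This yields $\E\|\bfP - \E\bfP\|_{2t}^{2t} \le (\sqrt{2}t)^{2t}\bigl(\E\|\bfP_{0,1}\|_{2t}^{2t} + \E\|\bfP_{1,0}\|_{2t}^{2t}\bigr)$. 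Next, for each intermediate derivative $\bfP_{a,b}$ with $a+b < d$, I would use the Schatten triangle inequality together with the two-term case of \cref{trace inequality} to obtain $\E\|\bfP_{a,b}\|_{2t}^{2t} \le 2^{2t-1}\bigl(\E\|\bfP_{a,b} - \E\bfP_{a,b}\|_{2t}^{2t} + \|\E\bfP_{a,b}\|_{2t}^{2t}\bigr)$, and then reapply \cref{non-Hermitian polynomial moments} to the centered part to produce the next-level pair $\E\|\bfP_{a+1,b}\|_{2t}^{2t}$ and $\E\|\bfP_{a,b+1}\|_{2t}^{2t}$.

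Iterating this step terminates at level $a+b=d$, because there $\bfP_{a,b}$ is deterministic and equals $\E\bfP_{a,b}$, so no further splitting is needed. Unrolling the recursion tree, each term $\|\E\bfP_{a,b}\|_{2t}^{2t}$ with $a+b=k$ appears with combinatorial multiplicity $\binom{k}{a}$, counting length-$k$ differentiation paths that take $a$ ``row'' derivatives and $b$ ``column'' derivatives, times a product of $k$ factors of $(\sqrt{2}t)^{2t}$ from the polynomial moment inequality and at most $k$ factors of $2^{2t-1}$ from the triangle splits. The overall structure is a direct analogue of the argument proved for the multilinear case in \cref{homo multilinear recursion}, with the three-way Rosenthal decomposition replaced by the two-way Gaussian decomposition and no need for a derivative-of-diagonal-blocks ($c$) branch.

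The main obstacle I foresee is the constant bookkeeping: one must verify that after summing the geometric series and applying the crude estimate $\binom{k}{a} \le 2^k \le 2^d$, the aggregated coefficient of every term $\|\E\bfP_{a,b}\|_{2t}^{2t}$ with $1 \le a+b \le d$ fits inside the single factor $(2^d\sqrt{2}t)^{2t}$. Concretely, the powers of $2$ arising from the triangle splits and from the path multiplicities must be collected into the $2^{2td}$ part of $(2^d\sqrt{2}t)^{2t}$, while the $(\sqrt{2}t)^{2t}$ factors produced by successive applications of \cref{non-Hermitian polynomial moments} must be organized so that only one factor of $(\sqrt{2}t)^{2t}$ survives outside the $2^d$ part; this is where the argument requires the most care.
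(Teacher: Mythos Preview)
Your proposal is correct and follows essentially the same route as the paper: the paper packages one round of \cref{non-Hermitian polynomial moments} followed by the two-term case of \cref{trace inequality} into a single recursive step (\cref{gaussian recursion claim}), and then iterates that step from $k=0$ up to $k=d-1$, exactly as you describe. Your flagged concern about organizing the accumulated factors of $(\sqrt{2}t)^{2t}$ and $2^{2t-1}$ into the single constant $(2^d\sqrt{2}t)^{2t}$ is indeed the only place requiring care, and the paper's proof is equally terse on this point.
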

\begin{proof} 
Start by applying \cref{gaussian recursion claim} with $k = 0$. Apply \cref{gaussian recursion claim} recursively until $k = d-1$ to get the desired bound.
\end{proof}

\begin{claim} \label{gaussian recursion claim}
 Let $\bfx \sim \gauss{0}{\mathbb{I}_n}$ and $\{\bfA_{\bfi}\}_{\bfi \in [n]^d}$ be multi-indexed sequences of deterministic matrices of the same dimension. Define a degree-$d$ homogeneous Gaussian polynomial random matrix as
$$
\bfP(\bfx) =  \sum_{\bfi \in [n]^d} \left( \bfA_{\bfi} \cdot \prod_{j \in \{i_1,\dots, i_d \}} x_j\right).
$$
Let $a,b \in \Z_{\geq 0}$ and $a+b = k < d$. Let $\bfP_{a,b}(\bfx)$ be the $k$-th partial derivative block matrix associated to $\bfP(\bfx)$. Then for $2 \leq t \leq \infty,$
\begin{align*}
\E \norm{\bfP_{a,b} - \E \bfP_{a,b}}_{2t}^{2t} ~&\leq ~ (2\sqrt{2}t)^{2t} \cdot \left( \E \norm{\bfP_{a+1,b} - \E \bfP_{a+1,b}}^{2t}_{2t}  + \norm{\E \bfP_{a+1,b}}^{2t}_{2t} \right) \\
 & ~ + (2\sqrt{2}t)^{2t} \cdot \left( \E \norm{\bfP_{a,b+1} - \E \bfP_{a,b+1}}^{2t}_{2t} + \norm{\E \bfP_{a,b+1}}^{2t}_{2t} \right).
\end{align*}    
\end{claim}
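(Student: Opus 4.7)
The plan is to apply \cref{non-Hermitian polynomial moments} directly to the function $\bfP_{a,b}(\bfx)$, which is a matrix-valued polynomial of degree $d - k$ in the Gaussian vector $\bfx$ and hence has all moments finite. The hypothesis $t \geq 2$ puts us in the valid range $t \geq 1.5$ required by the lemma. Applying it yields
\[
\E \norm{\bfP_{a,b} - \E \bfP_{a,b}}_{2t}^{2t} \leq (\sqrt{2}t)^{2t} \cdot \left( \E \norm{\left(\sum_{i=1}^n \partial_i \bfP_{a,b} \cdot \partial_i \trans{\bfP}_{a,b}\right)^{\!1/2}}_{2t}^{2t} + \E \norm{\left(\sum_{i=1}^n \partial_i \trans{\bfP}_{a,b} \cdot \partial_i \bfP_{a,b}\right)^{\!1/2}}_{2t}^{2t} \right).
\]

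The next step is to identify the two sums inside these Schatten norms with the partial derivative block matrices from \cref{sec:prelims}. Since $\bfP_{a,b+1}$ is the horizontal concatenation $[\partial_1 \bfP_{a,b}, \ldots, \partial_n \bfP_{a,b}]$, a direct block multiplication gives $\bfP_{a,b+1} \trans{\bfP}_{a,b+1} = \sum_{i=1}^n \partial_i \bfP_{a,b} \cdot \partial_i \trans{\bfP}_{a,b}$. Similarly, because $\bfP_{a+1,b}$ is the vertical stack of the $\partial_i \bfP_{a,b}$, we have $\trans{\bfP}_{a+1,b} \bfP_{a+1,b} = \sum_{i=1}^n \partial_i \trans{\bfP}_{a,b} \cdot \partial_i \bfP_{a,b}$. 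Using the identity $\norm{\bfA}_{2t}^{2t} = \tr(\bfA \trans{\bfA})^t = \tr(\trans{\bfA} \bfA)^t$, the two Schatten terms above equal $\norm{\bfP_{a,b+1}}_{2t}^{2t}$ and $\norm{\bfP_{a+1,b}}_{2t}^{2t}$, respectively, so that
\[
\E \norm{\bfP_{a,b} - \E \bfP_{a,b}}_{2t}^{2t} \leq (\sqrt{2}t)^{2t} \cdot \left( \E \norm{\bfP_{a,b+1}}_{2t}^{2t} + \E \norm{\bfP_{a+1,b}}_{2t}^{2t} \right).
\]

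To bring the bound to the form stated in the claim, I would split $\bfP_{a,b+1} = (\bfP_{a,b+1} - \E \bfP_{a,b+1}) + \E \bfP_{a,b+1}$, apply the triangle inequality for the Schatten norm, and use the elementary convexity estimate $(x+y)^{2t} \leq 2^{2t-1}(x^{2t}+y^{2t})$ to obtain
\[
\E \norm{\bfP_{a,b+1}}_{2t}^{2t} \leq 2^{2t-1} \left( \E \norm{\bfP_{a,b+1} - \E \bfP_{a,b+1}}_{2t}^{2t} + \norm{\E \bfP_{a,b+1}}_{2t}^{2t} \right),
\]
and likewise for $\bfP_{a+1,b}$. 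Multiplying through by $(\sqrt{2}t)^{2t}$ and using $(\sqrt{2}t)^{2t} \cdot 2^{2t-1} \leq (2\sqrt{2}t)^{2t}$ yields the desired recursion.

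The main obstacle is purely notational: carefully tracking transposes between the horizontal and vertical stacks and verifying that the two quadratic forms $\bfP_{a,b+1} \trans{\bfP}_{a,b+1}$ and $\trans{\bfP}_{a+1,b} \bfP_{a+1,b}$ correspond exactly to the two sums produced by \cref{non-Hermitian polynomial moments}. Once this identification is in hand, the remainder of the argument is a single application of the polynomial moment inequality followed by an elementary use of the triangle inequality.
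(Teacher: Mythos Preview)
Your proposal is correct and follows essentially the same route as the paper: apply \cref{non-Hermitian polynomial moments} to $\bfP_{a,b}$, identify the two quadratic sums with the Schatten norms of $\bfP_{a,b+1}$ and $\bfP_{a+1,b}$, then split off the expectation. The only cosmetic difference is that the paper invokes the trace inequality (\cref{trace inequality}) with $m=2$ for the last step, whereas you use the Schatten triangle inequality together with $(x+y)^{2t}\le 2^{2t-1}(x^{2t}+y^{2t})$; both produce the same extra factor of $2^{2t}$.
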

\begin{proof}
 By the non-hermitian polynomial moment (\cref{non-Hermitian polynomial moments}), we have
\begin{align*}
E:=\E \norm{\bfP_{a,b} - \E \bfP_{a,b}}_{2t}^{2t} & \leq (\sqrt{2}t)^{2t} \cdot  \left( \E \norm{\left(\sum_{i=1}^n \partial_{i} \bfP_{a,b} ~ \partial_{i} \trans{\bfP}_{a,b}\right)^{1/2}}^{2t}_{2t} + \E \norm{\left(\sum_{i=1}^n \partial_{i} \trans{\bfP}_{a,b} ~ \partial_{i} \bfP_{a,b}\right)^{1/2}}_{2t}^{2t} \right) \\
& =  (\sqrt{2}t)^{2t} \cdot \left( \E \norm{\begin{bmatrix}
    \partial_1 \bfP_{a,b} & \dots & \partial_n \bfP_{a,b}
\end{bmatrix}}^{2t}_{2t} + \E \norm{\begin{bmatrix}
    \partial_1 \bfP_{a,b} \\ \vdots \\ \partial_n \bfP_{a,b}
\end{bmatrix}}^{2t}_{2t}  \right) \\
& = (\sqrt{2}t)^{2t} \cdot \left(\E \norm{\bfP_{a+1,b}}^{2t}_{2t} + \E \norm{\bfP_{a,b+1}}^{2t}_{2t} \right), 
\end{align*}
where the last equality uses our notation for partial derivative block matrices introduced in \cref{partial}. Notice that $\bfP_{a+1,b}$ and $\bfP_{a,b+1}$ are homogeneous Gaussian polynomial random matrix of degree $k-1$. Since $\bfP_{a+1,b}$ and $\bfP_{a,b+1}$ are not necessarily centered, 
\begin{align*}
E ~ & \leq ~ (\sqrt{2}t)^{2t} \cdot \left(\E \norm{\bfP_{a+1,b} - \E \bfP_{a+1,b} + \E \bfP_{a+1,b}}^{2t}_{2t} + \E \norm{\bfP_{a,b+1} - \E \bfP_{a,b+1} +\E \bfP_{a,b+1}}^{2t}_{2t} \right) \\
& \leq ~ (2\sqrt{2}t)^{2t} \cdot \left(\E \norm{\bfP_{a+1,b} - \E \bfP_{a+1,b}}^{2t}_{2t} + \norm{\E \bfP_{a+1,b}}^{2t}_{2t} + \E \norm{\bfP_{a,b+1} - \E \bfP_{a,b+1}}^{2t}_{2t} + \norm{\E \bfP_{a,b+1}}^{2t}_{2t} \right),
\end{align*}
where the last inequality is due to the trace inequality (\cref{trace inequality}). 
\end{proof}

\section{Applications}\label{sec:applications}
\subsection{Graph Matrices}
Denote the \Erdos-\Renyi random graph on $n$ vertices as $\calG_{n,p}$, where each edge is present with probability $p$ independent of all other edges. When $\calG_{n,p}$ is viewed as a probability space, it is equal to $(\Omega, \cal F, \mu)$ where $\Omega$ is the sample space of all possible graphs on $n$ vertices. Any $G \in \Omega$ is a random vector representing all edges. Each coordinate in $G$, denoted by $G_{ij}$, is an independent random variable representing a single edge. We can construct a random graph by drawing a copy of $G \sim \mu$. To adopt the convention in $p$-biased Fourier analysis, we normalize $G_{ij}$ such that $\E G_{ij} = 0$ and $\E G_{ij}^2 = 1$, which leads to the sample space $\Omega = \left \{ - \sqrt{\frac{1-p}{p}}, \sqrt{\frac{p}{1-p}} \right\}^{\binom{n}{2}}$.

\begin{definition} [Shape, see e.g. \cite{RT23} Definition 4.2]
A shape is a tuple $\tau = (V(\tau), E(\tau), U_{\tau}, V_{\tau})$ where $(V(\tau), E(\tau))$ is a graph and $U_{\tau}$, $V_{\tau}$ are ordered subsets of the vertices.
\end{definition}

\begin{definition} [Graph matrix, see e.g. \cite{RT23} Definition 4.4]
Given a shape $\tau$, the associated graph matrix $\bfM$ : $\Omega \rightarrow \R^{d_1 \times d_2}$ is a matrix-valued function such that
$$
\bfM [I,J] = \sum_{\substack{\phi \in \calT^k_n: \\ \phi(U_\tau) = I, \phi(V_\tau) = J }} \prod _{(u,v) \in E(\tau)} G_{\phi(u),\phi(v)}.
$$
In other words, $\bfM$ maps an input graph $G \in \Omega$ to a $\frac{n!}{(n-|I|)!} \times \frac{n!}{(n-|J|)!}$ matrix whose rows and columns are indexed by $I$ and $J$ respectively.
\end{definition}

\begin{definition}[Vertex Separator, see e.g. \cite{RT23} Definition 4.8]
    For any shape $\tau$, a vertex separator is a subset of vertices $S \subseteq V(\tau)$ such that there is no path from $U_\tau$ to $V_\tau$ in $\tau ~ \textbackslash ~ S$, which is the shape obtained by deleting $S$ and all edges adjacent to S. We write $S_\tau$ for a vertex separator of the minimum size.
\end{definition}

\begin{claim} \label{vertex separator}
   For any shape $\tau$, let $\{E_1, E_2\}$ be an arbitrary cover of $E(\tau)$. Let $S=E_1 \cap E_2$ be the vertex set of the overlapping edges between $E_1$ and $E_2$. If $S$ contains $E_1 \cap V_\tau$ and $E_2 \cap U_\tau$, then $S$ is a vertex separator. 
\end{claim}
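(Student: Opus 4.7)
The plan is to prove the contrapositive: suppose for contradiction that there is a path from some $u \in U_\tau$ to some $v \in V_\tau$ in $\tau \setminus S$, and derive a contradiction from the three containments. First, a notational convention: I read the $E_i$ inside set intersections as shorthand for the vertex set $V(E_i)$ spanned by the edges in $E_i$, so that $E_1 \cap E_2$ means $V(E_1) \cap V(E_2)$, $E_1 \cap V_\tau$ means $V(E_1) \cap V_\tau$, and $E_2 \cap U_\tau$ means $V(E_2) \cap U_\tau$. This matches the informal description of $S$ as the vertex set where $E_1$- and $E_2$-edges meet.

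Next I would fix the hypothetical path $P = w_0 w_1 \cdots w_\ell$ with $w_0 = u \in U_\tau$, $w_\ell = v \in V_\tau$, and $w_i \notin S$ for every $i$. Since $\{E_1, E_2\}$ covers $E(\tau)$, each edge of $P$ can be labeled by $E_1$ or $E_2$ (choose one arbitrarily for edges lying in both). I then split into three cases according to which labels appear along $P$. If only $E_1$ labels appear, then the last edge $(w_{\ell-1}, w_\ell)$ lies in $E_1$ and witnesses $v \in V(E_1) \cap V_\tau \subseteq S$, contradicting $v \notin S$. Symmetrically, if only $E_2$ labels appear, the first edge forces $u \in V(E_2) \cap U_\tau \subseteq S$, again a contradiction. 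If both labels appear along $P$, then walking from $u$ to $v$ there must be some index $i$ where the labels of $(w_{i-1}, w_i)$ and $(w_i, w_{i+1})$ differ; the vertex $w_i$ is then an endpoint of both an $E_1$-edge and an $E_2$-edge, hence $w_i \in V(E_1) \cap V(E_2) \subseteq S$, contradicting $w_i \notin S$.

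The argument is a short path-tracing case analysis that matches the three qualitative ``types'' of $U_\tau$-to-$V_\tau$ paths (purely $E_1$-colored, purely $E_2$-colored, or mixed) with the three containment hypotheses. I do not expect any real obstacle beyond pinning down the notational convention at the start; the only step worth emphasizing is the third case, where the key observation is simply that a label-switch along $P$ produces a vertex incident to edges of both families and therefore in $V(E_1) \cap V(E_2)$. The claim then follows immediately, and this gives precisely the tool needed to verify that the constructions of $U' \cap V'$ arising in the sketches for dense and sparse graph matrices are indeed vertex separators.
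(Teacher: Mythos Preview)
Your proof is correct and matches the paper's argument in substance; your reading of $E_1 \cap E_2$ as $V(E_1) \cap V(E_2)$ is exactly the intended one. The paper's formal proof phrases the same idea structurally (first observing that $V(E_1)\setminus S$ and $V(E_2)\setminus S$ are disconnected in $\tau\setminus S$, then locating $U_\tau\setminus S$ and $V_\tau\setminus S$ on opposite sides), while your contrapositive path-tracing with the three label cases is precisely the informal ``red/green edge'' argument the paper gives in its introduction---so the two are equivalent presentations of the same observation.
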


\begin{proof}
    Since $S = E_1 \cap E_2$, there is no path from $E_1 \backslash S$ to $E_2 \backslash S$. Additionally, if $S$ contains $E_1 \cap V_\tau$, then $E_1 \backslash S$ is not adjacent to $V_\tau$ and $E_1 \backslash S$ contains $U_\tau \backslash S$. If $S$ also contains $E_2 \cap U_\tau$, then $E_2 \backslash S$ is not adjacent to $U_\tau$ and $E_2 \backslash S$ contains $V_\tau \backslash S$. Since there is no path from $U_\tau \backslash S$ to $V_\tau \backslash S$, $S$ is a vertex separator.
\end{proof}

\begin{theorem} \label{graph matrix}
For any shape $\tau$ and $2 \leq t \leq \infty$,
$$
\E \norm{\bfM}_{4t}^{4t} ~\leq~ \left((48t|V(\tau)|)^{4t|V(\tau)|} (C|E(\tau)|)^{|E(\tau)|} n^{|V(\tau)|} \right) \left( \sqrt{\frac{1-p}{p}}\right)^{4t|E(S_\tau)|} n^{2t(|V(\tau)| - |S_\tau|)}
$$
where $C$ is an absolute constant and $E(S_\tau)$ are all edges adjacent to $S_\tau$.
\end{theorem}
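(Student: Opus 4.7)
The plan is to apply the multilinear-recursion framework of \cref{homo multilinear recursion} directly to the graph matrix $\bfM$, which is a permutation-symmetric, homogeneous multilinear polynomial of degree $d = |E(\tau)|$ in the normalized edge variables $\{G_{ij}\}_{(i,j) \in \binom{[n]}{2}}$ satisfying $\E G_{ij} = 0$, $\E G_{ij}^2 = 1$, and $|G_{ij}| \leq L := \sqrt{(1-p)/p}$. Since every monomial of $\bfM$ is indexed by an image of the fixed graph $(V(\tau), E(\tau))$, the structured decoupling of \cref{graph_decoupling} applies with $k = |V(\tau)|$, improving the decoupling constant from the generic $|E(\tau)|^{|E(\tau)|}$ to $|V(\tau)|^{|V(\tau)|}$. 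Plugging this improvement into the iterated-Rosenthal argument underlying the proof of \cref{homo multilinear recursion} yields a bound of the form
\[
\E \|\bfM\|_{4t}^{4t} \;\leq\; (48 t |V(\tau)|)^{4t|V(\tau)|} \sum_{\substack{a,b,c \geq 0 \\ a+b+c = d}} \binom{d}{a,b,c}\, L^{4ct}\, \|\bfM_{a,b,c}\|_{4t}^{4t}.
\]

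The next step is to unfold each deterministic matrix $\bfM_{a,b,c}$ combinatorially. Differentiating in an edge variable $G_e$ deletes $e$ from $\tau$, and the three block-assembly operations defining $\bfM_{a+1,b,c}$, $\bfM_{a,b+1,c}$, $\bfM_{a,b,c+1}$ have the effect of absorbing the two endpoints of $e$ into $U_\tau$ (call these \emph{red} edges), into $V_\tau$ (\emph{green}), or simultaneously into both (\emph{yellow}). I would therefore write $\bfM_{a,b,c}$ as a sum over colorings of $E(\tau)$ with $a$ red, $b$ green, and $c$ yellow edges, with each summand itself a graph matrix for a modified shape $\tau' = (V(\tau), \emptyset, U', V')$ in which $U' \supseteq U_\tau$ is enlarged by the red and yellow endpoints, $V' \supseteq V_\tau$ by the green and yellow endpoints, and $S := U' \cap V'$ contains in particular every yellow endpoint.

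Each such fully-differentiated graph matrix can then be analyzed explicitly. Ordering rows and columns by the restriction of $\phi$ to $S$ exhibits the matrix as block-diagonal with at most $n^{|S|}$ blocks; under the standard assumption that $\tau$ has no isolated vertices, each block is an all-ones rank-one matrix of dimensions at most $n^{|U' \setminus S|} \times n^{|V' \setminus S|}$. Its Schatten-$4t$ norm to the $4t$-th power is therefore at most $n^{|S|} \cdot n^{2t(|U' \cup V'| - |S|)} \leq n^{|V(\tau)|} \cdot n^{2t(|V(\tau)| - |S|)}$. By \cref{vertex separator}, any $S$ arising from a valid coloring is a vertex separator of $\tau$, so $|S| \geq |S_\tau|$; the dominant contribution is attained by choosing $S = S_\tau$ and coloring all $|E(S_\tau)|$ edges induced on $S_\tau$ yellow, which gives $c = |E(S_\tau)|$ and $|S| = |S_\tau|$ simultaneously.

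The main obstacle I anticipate is the bookkeeping step of the second paragraph: verifying that the row/column indexing produced by iterating the three block-assembly operations really corresponds to absorbing edge endpoints into $U_\tau$, $V_\tau$, or $U_\tau \cap V_\tau$ in the described manner, and that the claimed block-diagonal all-ones structure genuinely emerges after all $d$ derivatives (including the handling of how $U_\tau$ and $V_\tau$ interact with newly added endpoints and the application of \cref{vertex separator} to the resulting $U', V'$). Once this identification is in hand, summing over the at most $3^d$ colorings, absorbing the multinomial $\binom{d}{a,b,c} \leq 3^d$ and the polynomial-in-$d$ number of $(a,b,c)$-triples into the $(C|E(\tau)|)^{|E(\tau)|}$ prefactor, and observing that every term is bounded by the $S = S_\tau$ choice yields the stated inequality.
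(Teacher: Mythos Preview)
Your proposal is correct and follows essentially the same strategy as the paper: apply the structured decoupling of \cref{graph_decoupling} together with \cref{homo multilinear recursion}, then bound each deterministic matrix $\bfM_{a,b,c}$ via a vertex-separator argument. Your red/green/yellow edge-coloring interpretation and block-diagonal rank-one analysis mirror the paper's informal discussion in the introduction; the formal proof instead decomposes $\bfM_{a,b,c}$ as $\sum_{\sigma \in \mathfrak{S}_d} \bfM_{a,b,c,\sigma}$ and bounds each $\norm{\bfM_{a,b,c,\sigma}}_{4t}^{4t}$ by an explicit trace expansion, but the two computations are equivalent.
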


\begin{proof}
First, let $|V(\tau)| = k$ and we write $\bfM$ as a polynomial whose coefficients are matrices,
\begin{equation*} 
   \bfM = \sum_{\phi \in \calT^k_n} \bfB_{\phi(E)} \prod _{(i,j) \in E(\tau)} G_{\phi(i),\phi(j)}, 
\end{equation*}
where the $[\textit{I}, \textit{J}]$-entry of $\bfB_{\phi(E)}$ is
\begin{equation*}
\bfB_{\phi(E)}[\textit{I}, \textit{J}] =
\begin{cases}
1 & \text{if} ~ \phi(U_\tau) = \textit{I} ~~ \text{and} ~~ \phi(V_\tau) = \textit{J} \\
0 & \text{otherwise}.
\end{cases} 
\end{equation*}
Note that $\bfB_{\phi(E)}$’s have the same dimension as $\bfM$ and the rows and columns of $\bfB_{\phi(E)}$’s are indexed in the same way as $\bfM$. Since $U_\tau$ and $V_\tau$ are ordered and each $\phi$ assigns one set of value to vertices in $U_\tau$ and $V_\tau$, there is only one nonzero entry in each $\bfB_{\phi(E)}$. But there might be multiple $\phi$’s for which $\bfB_{\phi(E)}$’s are identical due to free vertices.

Next, let $|E(\tau)| = d$ and we rewrite $\bfM$ in a permutation symmetric way,
\begin{align*}
  \bfM & = \sum_{\phi \in \calT^k_n} \left(\frac{1}{d!}\sum_{\sigma \in \mathfrak{S}_d} \bfB_{\sigma(\phi(E))} G_{\phi(i_{\sigma(1)}),\phi(j_{\sigma(1)})} ~ \cdots ~ G_{\phi(i_{\sigma(d)}),\phi(j_{\sigma(d)})} \right) \\
  & = \frac{1}{d!}\sum_{\phi \in \calT^k_n} \bfA_{\phi(E)} \cdot G_{\phi(i_1),\phi(j_1)} ~ \cdots ~ G_{\phi(i_d),\phi(j_d)},
\end{align*}
where $\bfA_{\phi(E)} = \sum_{\sigma \in \mathfrak{S}_d} \bfB_{\sigma(\phi(E))} $. Notice that the indices of the random variables of $\bfM$ have the same structure as in \cref{graph_decoupling}, so we can decouple $\bfM$ using \cref{graph_decoupling},
$$
F := \E \norm{\bfM}_{4t}^{4t} ~\leq ~ |V(\tau)|^{4t|V(\tau)|} \left(\frac{1}{d!}\right)^{4t} \cdot \E \norm{\sum_{\phi \in \calT^k_n} \bfA_{\phi(E)} \cdot G_{\phi(i_1), \phi(j_1)}^{(1)}\cdots G_{\phi(i_d), \phi(j_d)}^{(d)}}_{4t}^{4t}, 
$$
where $\{G^{(1)}, \ldots, G^{(d)}\}$ denote $d$ independent copies of $G$. Since $\bfM$ is a multilinear polynomial random matrix, $\E \bfM = \bf0$. An application of \cref{homo multilinear recursion} yields,
\begin{equation*}
    F  ~ \leq ~ \sum_{\substack{a,b,c: \\ a+b+c=|E(\tau)|}} (48t|V(\tau)|)^{4t|V(\tau)|} \left(\frac{1}{|E(\tau)|!} \right)^{4t} \left( \sqrt{\frac{1-p}{p}}\right)^{4ct} \norm{\bfM_{a,b,c}}_{4t}^{4t},
\end{equation*}
where $\{\bfM_{a,b,c}\}$ are partial derivative block matrices associated to $\bfM$.

Fix a set of $a,b,c$, $\bfM_{a,b,c}$ is a block matrix whose blocks are comprised of $\{\bfA_{\phi(E)}\}_{\phi \in \calT^k_n}$. More specifically, the $[I,J]$-th block of $\bfM_{a,b,c}$ is
\begin{equation} \label{block entry}
\bfM_{a,b,c}[I,J] = \partial_{G_{I_1}} \cdots \partial_{G_{I_{a+c}}} \partial_{G_{J_1}} \cdots \partial_{G_{J_{b}}} \bfM 
    = \bfA_{I \cup J} = \sum_{\sigma \in \mathfrak{S}_d} \bfB_{\sigma(I \cup J)}.
\end{equation}

Algebraically, the last equality is due to the commutativity of partial derivative operators. Combinatorially, if we fix an arbitrary ordering on the edge set $E(\tau)$, the last expression is summing up all permutations on $d$ edges in $E(\tau)$. Now let’s delve into the combinatorics. Let $E(\tau) = E_1 \cup E_2$, where $E_1$ contains $a+c$ edges and $E_2$ contains $b+c$ edges with $c$ number of overlapping edges. Let $S = E_1 \cap E_2$ be the set of vertices shared by $E_1$ and $E_2$. By permuting the edges in $E(\tau)$, $\{\sigma(E_1), \sigma(E_2)\}$ are all possible covers of $E(\tau)$ by two overlapping sets of ordered edges. The row and column blocks of $\bfM_{a,b,c}$ are indexed by $\phi(E_1)$ and $\phi(E_2)$ respectively.

Let’s write $$\bfM_{a,b,c} = \sum_{\sigma \in \mathfrak{S}_d}\bfM_{a,b,c,\sigma},$$ where $\bfM_{a,b,c,\sigma}[I,J] = \bfB_{\sigma(I \cup J)}$. By the trace inequality (\cref{trace inequality}), $$\norm{\bfM_{a,b,c}}^{4t}_{4t} \leq \left(d!\right)^{4t}\sum_{\sigma \in \mathfrak{S}_d} \norm{\bfM_{a,b,c,\sigma}} ^{4t}_{4t}.$$ It follows that 
\begin{equation}\label{F}
    F  ~ \leq ~ \sum_{\substack{a,b,c: \\ a+b+c=|E(\tau)|}} \sum_{\sigma \in \mathfrak{S}_d} (48t|V(\tau)|)^{4t|V(\tau)|} \left( \sqrt{\frac{1-p}{p}}\right)^{4ct} \norm{\bfM_{a,b,c, \sigma}}_{4t}^{4t}.
\end{equation}
For any two distinct $\sigma_1$ and $\sigma_2 \in \mathfrak{S}_d$, $\norm{\bfM_{a,b,c,\sigma_1}}^{4t}_{4t} = \norm{\bfM_{a,b,c,\sigma_2}}^{4t}_{4t}$ since $\bfM_{a,b,c,\sigma_1}$ and $\bfM_{a,b,c,\sigma_2}$ are identical after permuting rows and columns. So we will focus on bounding $\norm{\bfM_{a,b,c,\sigma_0}}^{4t}_{4t}$ where $\sigma_0$ is the identity permutation. For identity permutation, we have $\bfM_{a,b,c, \sigma_0}[I,J] = \bfB_{I \cup J}$ by (\ref{block entry}). Denote $\bfM_{a,b,c,\sigma_0}$ by $\bfF$, 
\begin{align}
    E := &\norm{\bfM_{a,b,c,\sigma_0}}^{4t}_{4t} = \norm{\bfF}^{4t}_{4t} =  \tr \left(\trans{\bfF} \bfF \right)^{2t} \notag \\
    =& \tr \sum_{I_1,\dots,I_{2t} \in [n]^{2(a+c)}} \sum_{J_1,\dots,J_{2t} \in [n]^{2(b+c)}} \trans{\bfF}[I_1, J_1]~\bfF[I_1, J_2]~\trans{\bfF}[I_2, J_2]~\bfF [I_2, J_3] \cdots \trans{\bfF}[I_{2t}, J_{2t}]~\bfF [I_{2t}, J_1] \notag \\
    =& \tr \sum_{I_1,\dots,I_{2t} \in [n]^{2(a+c)}} \sum_{J_1,\dots,J_{2t} \in [n]^{2(b+c)}}  \trans{\bfB}_{I_1 \cup J_1}~\bfB_{I_1 \cup J_2}~\trans{\bfB}_{I_2 \cup J_2}~\bfB_{I_2 \cup J_3} \cdots \trans{\bfB}_{I_{2t} \cup J_{2t}}~\bfB_{I_{2t} \cup J_1} \label{summands}
\end{align}

We take a look at for which $I_1, \dots I_{2t}$ and $J_1, \dots, J_{2t}$ that the summands in (\ref{summands}) are nonzero and the structure of these nonzero summands. First of all, $\bfB_{I_i \cup J_i} \neq {\bf 0}$ implies that $I_i \cup J_i = \phi(E_1 \cup E_2)$, for some $\phi \in \calT^k_n$ and $I_i(S) = J_i(S)$ for $1\leq i \leq 2t$. Similarly, $\bfB_{I_i \cup J_{i+1}} \neq {\bf 0}$ implies that $I_i \cup J_{i+1} = \phi’(E_1 \cup E_2)$, for some $\phi’ \in \calT^k_n$ and $I_i(S) = J_{i+1}(S)$ for $1\leq i \leq 2t$ (the additions in the subscripts are in mod $2t, \ie J_{2t+1} = J_1$). Secondly, for each $\bfB_{I_i \cup J_i} \neq {\bf 0}$, there is only one nonzero entry, namely $\bfB_{I_i \cup J_i}[I_i \cup J_i(U_\tau), I_i \cup J_i(V_\tau)] = 1$. So $\trans{\bfB}_{I_i \cup J_i}\bfB_{I_i \cup J_{i+1}} \neq {\bf 0}$ if $I_i \cup J_i(U_\tau) = I_i \cup J_{i+1}(U_\tau)$ and the only nonzero entry is 
$$
\trans{\bfB_{I_i \cup J_i}}\bfB_{I_i \cup J_{i+1}} [I_i \cup J_i(V_\tau), I_i \cup J_{i+1}(V_\tau)] = 1.
$$
It follows that $\trans{\bfB}_{I_i \cup J_i} \bfB_{I_i \cup J_{i+1}} \neq {\bf 0}$ for $1\leq i \leq 2t$ if $J_1(E_2 \cap U_\tau) = J_2(E_2 \cap U_\tau) = \cdots = J_{2t}(E_2 \cap U_\tau)$. Lastly, $\trans{\bfB}_{I_i \cup J_i}\bfB_{I_i \cup J_{i+1}} \trans{\bfB}_{I_{i+1} \cup J_{i+1}}\bfB_{I_{i+1} \cup J_{i+2}}\neq {\bf 0}$ if $I_i \cup J_{i+1}(V_\tau) = I_{i+1} \cup J_{i+1}(V_\tau)$ and the only nonzero entry is 
$$
\trans{\bfB}_{I_i \cup J_i}\bfB_{I_i \cup J_{i+1}} \trans{\bfB}_{I_{i+1} \cup J_{i+1}}\bfB_{I_{i+1} \cup J_{i+2}}[I_{i} \cup J_{i}(V_\tau), I_{i+1} \cup J_{i+2}(V_\tau)] = 1.
$$
It follows that $\trans{\bfB}_{I_1 \cup J_1}~\bfB_{I_1 \cup J_2}~\trans{\bfB}_{I_2 \cup J_2}~\bfB_{I_2 \cup J_3} \cdots \trans{\bfB}_{I_{2t} \cup J_{2t}}~\bfB_{I_{2t} \cup J_1} \neq {\bf 0}$ if $I_1(E_1 \cap V_\tau) = I_2(E_1 \cap V_\tau) = \cdots = I_{2t}(E_1 \cap V_\tau)$ and the only nonzero entry is 
$$
\trans{\bfB}_{I_1 \cup J_1}~\bfB_{I_1 \cup J_2}~\trans{\bfB}_{I_2 \cup J_2}~\bfB_{I_2 \cup J_3} \cdots \trans{\bfB}_{I_{2t} \cup J_{2t}}~\bfB_{I_{2t} \cup J_1} [I_1 \cup J_{1}(V_\tau), I_{2t} \cup J_{1}(V_\tau)] = 1.
$$
Since $I_1(E_1 \cap V_\tau) = I_{2t}(E_1 \cap V_\tau)$, we have $I_1 \cup J_{1}(V_\tau) = I_{2t} \cup J_{1}(V_\tau)$. So if the summand is nonzero, it has a $1$ on its diagonal. 

To summarize, each summand in (\ref{summands}) is nonzero if and only if $I_i \cup J_i = \phi_i(E_1 \cup E_2), I_i \cup J_{i+1} =\phi’_i(E_1 \cup E_2)$ for some $\phi_i, \phi’_i\in \calT^k_n$, $I_i$’s and $J_i$’s agree on $S$, $I_i$’s agree on $E_1\cap V_\tau$ and $J_i$’s agree on $E_2\cap U_\tau$ for $1 \leq i \leq 2t$. Since each nonzero summand contributes a $1$ on the diagonal, we can simply count the number of nonzero summands to compute the trace in (\ref{summands}). Notice that the number of nonzero summands is equal to the number of $\phi_1, \dots, \phi_{2t}, \phi’_1, \dots, \phi’_{2t} $ that satisfy all the constraints. Hence 
\begin{equation} \label{upper bound1}
E \leq n^{|S|} ~ n^{|E_1 \cap V_\tau|} ~ n^{|E_2 \cap U_\tau|} ~ n^{2t|E_1 \backslash V_\tau \backslash S|}  ~ n^{2t|E_2 \backslash U_\tau \backslash S|}.    
\end{equation}

For a fixed set of $a,b,c$, (\ref{upper bound1}) provides an upper bound for $\norm{\bfM_{a,b,c, \sigma}}^{4t}_{4t}$ for any permutation $\sigma \in \mathfrak{S}_d$. But what is an upper bound for $\norm{\bfM_{a,b,c, \sigma}}^{4t}_{4t}$ among all possible sets of $a,b,c$? By varying $a,b,c$, we can always make $S = E_1 \cap E_2$ large enough to contain vertices in $E_1 \cap V_\tau$ and $E_2 \cap U_\tau$. Thus $S$ is a vertex separator by \cref{vertex separator}. It follows that $E_1 \backslash V_\tau \backslash S = E_1 \backslash S$ and $E_2 \backslash U_\tau \backslash S = E_2 \backslash S$. Hence
\begin{equation} \label{upper bound2}
  E ~\leq~ n^{|V(\tau)|} ~ n^{2t|E_1 \backslash S|}  ~ n^{2t|E_2 \backslash S|} ~\leq~ n^{|V(\tau)|}~ n^{2t(|V(\tau)| - |S_\tau|)}, 
\end{equation}
where $S_\tau$ is a vertex separator of minimum size. Substituting (\ref{upper bound2}) into (\ref{F}) yields the desired bound.
\end{proof}

\begin{corollary} For any given shape $\tau$, any $\epsilon > 0$, with probability $1-\epsilon$,
    $$
     \norm{\bfM} ~ \leq ~ |V(\tau)|^{|V(\tau)|} \left( C \log \left(|E(\tau)|^{|E(\tau)|} n^{|V(\tau)|} / \epsilon \right) \right)^{|V(\tau)|} \left( \sqrt{\frac{1-p}{p}}\right)^{|E(S_\tau)|} \sqrt{n}^{|V(\tau)| - |S_\tau|},
    $$
    where $C >0$ is an absolute constant.
\end{corollary}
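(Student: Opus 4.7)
\begin{proofsketch}
The plan is to convert the Schatten moment bound from \cref{graph matrix} into a high-probability spectral-norm bound by the standard Markov-plus-optimize-$t$ argument. Since $\norm{\bfM} \leq \norm{\bfM}_{4t}$ for every $t \geq 1$, we have
\[
\Pr\inparen{\norm{\bfM} \geq \lambda} ~\leq~ \Pr\inparen{\norm{\bfM}_{4t}^{4t} \geq \lambda^{4t}}
~\leq~ \lambda^{-4t} \cdot \E \norm{\bfM}_{4t}^{4t},
\]
so it suffices to choose $\lambda$ so that the right-hand side of \cref{graph matrix}, divided by $\lambda^{4t}$, is at most $\epsilon$.

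Taking the $4t$-th root of the bound in \cref{graph matrix} and of $\epsilon^{-1}$, the desired inequality reduces to
\[
\lambda ~\geq~ (48 t |V(\tau)|)^{|V(\tau)|} \cdot \inparen{C|E(\tau)|}^{|E(\tau)|/4t} \cdot n^{|V(\tau)|/4t} \cdot \epsilon^{-1/4t}
\cdot \inparen{\sqrt{(1-p)/p}}^{|E(S_\tau)|} \cdot \sqrt{n}^{|V(\tau)| - |S_\tau|}.
\]
The factors that depend on $t$ only through the exponent $1/4t$ can be collapsed by choosing
\[
t ~=~ \max\inbraces{2,\; \log\inparen{|E(\tau)|^{|E(\tau)|} \cdot n^{|V(\tau)|} / \epsilon}},
\]
which ensures (up to an absolute constant in the exponent) that $\inparen{C|E(\tau)|}^{|E(\tau)|/4t} \cdot n^{|V(\tau)|/4t} \cdot \epsilon^{-1/4t}$ is bounded by an absolute constant. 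The $\max$ with $2$ is there so that \cref{graph matrix} is applicable; the case $t = 2$ only strengthens the conclusion because the logarithm inside is then $O(1)$.

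With this choice of $t$, the prefactor $(48 t |V(\tau)|)^{|V(\tau)|}$ becomes $|V(\tau)|^{|V(\tau)|} \cdot (C \log(|E(\tau)|^{|E(\tau)|} n^{|V(\tau)|} / \epsilon))^{|V(\tau)|}$, absorbing the constant $48$ into the $C$ inside the logarithm, which matches the statement of the corollary. The remaining factors $\inparen{\sqrt{(1-p)/p}}^{|E(S_\tau)|} \cdot \sqrt{n}^{|V(\tau)| - |S_\tau|}$ pass through unchanged. The only real bookkeeping obstacle is ensuring that the constant $C$ in the final bound can absorb both the $48$ coming from the recursion and the $C$ from $(C|E(\tau)|)^{|E(\tau)|}$ in \cref{graph matrix}; this is straightforward since both appear inside exponents that either contribute $|V(\tau)|^{|V(\tau)|}$ or get washed into the logarithm by the choice of $t$.
\end{proofsketch}
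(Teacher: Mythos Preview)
Your proof is correct and follows essentially the same approach as the paper: apply Markov's inequality to the Schatten moment bound from \cref{graph matrix}, take $4t$-th roots, and choose $t$ proportional to $\log\!\bigl(|E(\tau)|^{|E(\tau)|} n^{|V(\tau)|}/\epsilon\bigr)$ so that the $\epsilon^{-1/4t}$, $n^{|V(\tau)|/4t}$, and $(C|E(\tau)|)^{|E(\tau)|/4t}$ factors become $O(1)$. The paper uses $t = \tfrac{1}{4}\log(\cdot)$ rather than your $\max\{2,\log(\cdot)\}$, but this is immaterial up to the absolute constant $C$, and your explicit handling of the constraint $t \geq 2$ is if anything slightly more careful.
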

\begin{proof}
    By Markov’s inequality and \cref{graph matrix}, we have
    \begin{align}
        &\Psymb( \norm{\bfM}  \geq \theta) ~\leq~ \Psymb(\norm{\bfM}^{4t}_{4t} \geq \theta^{4t}) ~\leq ~ \theta^{-4t}  \E \norm{\bfM}_{4t}^{4t} \notag \\
        = ~ & \theta^{-4t} \left((48t|V(\tau)|)^{4t|V(\tau)|}(C|E(\tau)|)^{|E(\tau)|} n^{|V(\tau)|} \right) \left( \sqrt{\frac{1-p}{p}}\right)^{4t|E(S_\tau)|} n^{2t(|V(\tau)| - |S_\tau|)}. \label{upper bound3}
    \end{align}
    Set the right hand side of (\ref{upper bound3}) to $\epsilon$ by taking
    $$
    \theta = \left( \epsilon^{-\frac{1}{4t}}(48t|V(\tau)|)^{|V(\tau)|} (C|E(\tau)|)^{\frac{|E(\tau)|}{4t}} n^{\frac{|V(\tau)|}{4t}} \right) \left( \sqrt{\frac{1-p}{p}}\right)^{|E(S_\tau)|} \sqrt{n}^{|V(\tau)| - |S_\tau|}.
    $$
    Take
    $$
    t = \frac{1}{4} \log \left(|E(\tau)|^{|E(\tau)|} n^{|V(\tau)|} / \epsilon \right)
    $$
    to complete the proof.
\end{proof}

\subsection{A Simple Tensor Network}
A tensor network is a diagram of a collection of tensors connected together by contractions, which results in another tensor, matrix, vector or scalar. Many existing algorithms for tensor decomposition and tensor PCA analyze spectral properties of matrices arising from tensor networks \cite{RM14, HSS15, HSSS16, MW19, DDOCHST22, OTR22}. In particular, the melon graph (\cref{melon}) is similar to the tensor unfolding method used in \cite{RM14}. The spectral norm of the matrices associated to the melon graph is studied in \cite{OTR22} using combinatorial tools developed in Random Tensor Theory \cite{Gur16}. In this section, we will demonstrate our recursion framework for Gaussian polynomial random matrices (\cref{gaussian recursion}) in analyzing the spectral norm of the pure noise matrix associated to the melon graph.

\begin{figure}[h!]
    \centering
    \includegraphics[width=0.45\linewidth]{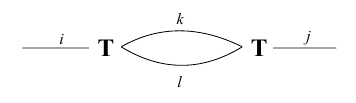}
    \caption{The Melon Graph}
    \label{melon}
\end{figure}

\begin{theorem}(See also \cite{Oue22:thesis} Theorem 7) Let $\bfT \in (\mathbb{R}^n)^{\otimes 3}$ be a random noise tensor with i.i.d. standard Gaussian entries. For $i,j \in [n]$, let $T_i$ and $T_j$ be the first-mode slices of $\bfT$. The pure noise matrix $\bfM \in \mathbb{R}^{n \times n}$ associated to the melon graph has entries $\bfM[i,j] = \langle T_i, T_j \rangle$. Then
$$
\E \norm{\bfM - \E \bfM}_{2t}^{2t} ~ \leq ~ \calO (n^{3t}).
$$
\end{theorem}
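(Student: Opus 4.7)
The plan is to apply the Gaussian Recursion (\cref{gaussian recursion}) with $d=2$. Writing $\bfX$ for the $n \times n^2$ Gaussian matrix obtained by flattening $\bfT$ along its first mode, we have $\bfM = \bfX\bfX^\T$, so $\bfM[i,j] = \sum_{k,\ell} x_{(i,k,\ell)}\,x_{(j,k,\ell)}$ is a degree-$2$ homogeneous polynomial in the $n^3$ i.i.d.\ standard Gaussian entries $\{x_\alpha\}_{\alpha \in [n]^3}$ of $\bfT$. Invoking \cref{gaussian recursion} with $d=2$ immediately gives
$$
\E \norm{\bfM - \E \bfM}_{2t}^{2t} \;\leq\; (4\sqrt{2}\,t)^{2t} \sum_{1 \leq a+b \leq 2} \norm{\E \bfM_{a,b}}_{2t}^{2t}.
$$

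Next, the first-order terms vanish: since the entries of $\bfM_{1,0}$ and $\bfM_{0,1}$ are linear forms in the centered Gaussians $x_\alpha$, we have $\E \bfM_{1,0} = \E \bfM_{0,1} = \zero$. The problem therefore reduces to estimating the Schatten norms of the three deterministic second-derivative matrices $\bfM_{2,0}$, $\bfM_{1,1}$, and $\bfM_{0,2}$, whose entries one reads off from
$$
\partial_\beta \partial_\alpha \bfM[i',j'] \;=\; \delta_{k_0,k_1}\delta_{\ell_0,\ell_1}\bigl(\delta_{i_0,i'}\delta_{i_1,j'} + \delta_{i_0,j'}\delta_{i_1,i'}\bigr)
$$
for $\alpha=(i_0,k_0,\ell_0)$ and $\beta=(i_1,k_1,\ell_1)$.

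The key step is then to diagonalize these three block matrices. For $\bfM_{0,2}$ (and, by formal transposition, $\bfM_{2,0}$), the idea is to group the resulting sum by the common value $u = (k_0,\ell_0) = (k_1,\ell_1)$ and expand the four cross-terms to show that $\bfM_{0,2}\bfM_{0,2}^\T$ is a scalar multiple of $\bfI_n$ (concretely, $2n^2(n+1)\,\bfI_n$), which makes its Schatten norm immediate. For $\bfM_{1,1}$, the factor $\delta_{k_0,k_1}\delta_{\ell_0,\ell_1}$ makes the matrix block-diagonal with $n^2$ identical $n^2 \times n^2$ blocks, each equal to $T + vv^\T$ where $T$ is the swap operator on $\R^n \otimes \R^n$ and $v = \mathrm{vec}(\bfI_n)$; using $Tv = v$ and $T^2 = \bfI$ one reads off that each block has one singular value equal to $n+1$ together with $n^2-1$ singular values equal to $1$.

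The main obstacle is the bookkeeping in the first of these computations: keeping the index sum defining $\bfM_{0,2}\bfM_{0,2}^\T$ organized well enough to recognize the collapse to a scalar multiple of the identity. Once the three Schatten-norm estimates are in place, the dominant contribution comes from $\bfM_{0,2}$ and $\bfM_{2,0}$, and combining with the prefactor $(4\sqrt{2}\,t)^{2t}$ (absorbed into the implicit $\calO(\cdot)$) yields the claimed bound.
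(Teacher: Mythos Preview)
Your proposal is correct and follows essentially the same approach as the paper: apply \cref{gaussian recursion} with $d=2$, observe that $\E\bfM_{1,0}=\E\bfM_{0,1}=\zero$ by linearity, and then bound the three deterministic second-derivative matrices, with $\bfM_{0,2}$ (equivalently $\bfM_{2,0}$) furnishing the dominant $\calO(n^{3t})$ term.

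The only difference is in how the Schatten norms of $\bfM_{0,2}$ and $\bfM_{1,1}$ are computed. The paper splits the sum $\sum_{\alpha,\beta}(\partial_\alpha\partial_\beta\bfM)^2$ into the cases $i_0=i_1$ and $i_0\neq i_1$ and bounds it by a multiple of $\bfI_n$, and for $\bfM_{1,1}$ it simply upper-bounds each diagonal block's Schatten norm by its Frobenius norm. You instead diagonalize exactly: your identity $\bfM_{0,2}\bfM_{0,2}^{\T}=2n^2(n+1)\,\bfI_n$ is correct (and sharper than the paper's inequality), and your identification of each $\bfM_{1,1}$-block with $T+vv^{\T}$, together with $Tv=v$ and $T^2=\bfI$, yields the exact singular values $(n+1,1,\ldots,1)$, hence $\norm{\bfM_{1,1}}_{2t}^{2t}=n^2\bigl((n+1)^{2t}+n^2-1\bigr)=\calO(n^{2t+2})$, matching the paper's cruder Frobenius bound $n^2(4n+2n^2)^t$. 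Your computations are tighter but lead to the same asymptotics; the overall structure of the argument is identical.
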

\begin{proof}
Since $\bfM$ is a degree 2 homogeneous Gaussian random matrix, $\E \bfM_{1,0} = \E \bfM_{0,1} = \bf 0$. By \cref{gaussian recursion}, 
$$
\E \norm{\bfM - \E \bfM}_{2t}^{2t} ~\leq~ (8t)^{2t} \left(\norm{\bfM_{2,0}}_{2t}^{2t} + \norm{\bfM_{0,2}}_{2t}^{2t} + \norm{\bfM_{1,1}}_{2t}^{2t} \right).
$$
Let $\{\bfB_{i,j,k,l}\}$ be the second order partial derivatives of $\bfM$, \ie
$$
\bfB_{i,j,k,l}  = \partial_{\bfT_{i,k,l}} ~ \partial_{\bfT_{j,k,l}} \bfM.
$$
Note that $\bfM_{2,0}$, $\bfM_{0,2}$ and $\bfM_{1,1}$ are made up with $\{\bfB_{i,j,k,l}\}$ stacked in different ways.
Let $E_{i,j}$ be the matrix basis with $1$ in its $[i,j]$-th entry and $0$ otherwise. For $i = j$, 
$$
\bfB_{i,i,k,l} ~ = ~ \partial^2_{\bfT_{i,k,l}} \bfM ~ = ~2 \cdot E_{i,i},
$$
For $i \neq j$, 
$$
\bfB_{i,j,k,l} ~ = ~  \partial_{\bfT_{i,k,l}}~\partial_{\bfT_{j,k,l}} \bfM  ~ = ~ E_{i,j} + E_{j,i}.
$$
Hence,
\begin{align*}
\norm{\bfM_{0,2}}^{2t}_{2t} & = \tr \left( \trans{\bfM}_{0,2} ~ \bfM_{0,2}\right)^t \\
& = \tr \left( \sum_{i,j,k,l \in [n]} \trans{\bfB}_{i,j,k,l} \bfB_{i,j,k,l} \right)^t \\
& = \tr \left( \sum_{i=j,k,l \in [n]} \trans{\bfB}_{i,j,k,l} \bfB_{i,j,k,l}  +  \sum_{i \neq j,k,l \in [n]} \trans{\bfB}_{i,j,k,l} \bfB_{i,j,k,l}\right)^t \\
& = \tr \left( \sum_{i,k,l \in [n]} 2 \cdot E_{i,i}  +  \sum_{i \neq j,k,l \in [n]} \left(E_{i,i} + E_{j,j} \right) \right)^t \\
& \leq \tr \left( 2 n^2 \cdot \mathbb{I}_n  +  n^3  \cdot \mathbb{I}_n \right)^t \\
& = \calO (n^{3t}).
\end{align*}

Since $\bfM$ is hermitian, $\trans{\bfB}_{i,j,k,l} = \bfB_{i,j,k,l} $, so $\norm{\bfM_{0,2}}^{2t}_{2t} = \norm{\bfM_{2,0}}^{2t}_{2t}$. We proceed to bound $\norm{\bfM_{1,1}}_{2t}^{2t}$. Since permutation of rows and columns doesn’t affect Schatten norm, we permute the rows and columns of $\bfM_{1,1}$ into a block diagonal matrix with its diagonal block $\bfD_{k,l}$ indexed by $k$ and $l$. 
$$
E : = \norm{\bfM_{1,1}}_{2t}^{2t} = \tr\left( \trans{\bfM}_{1,1} \bfM_{1,1} \right)^t =  \sum_{k,l \in [n]} \tr \left( \trans{\bfD}_{k,l} \bfD_{k,l} \right)^t \leq n^2 \cdot \norm{\bfD_{k,l}}^{2t}_{2t} \leq n^2 \cdot \left(\norm{\bfD_{k,l}}^{2}_{2} \right)^t,
$$
where we used the Frobenius norm to upper bound the Schatten norm in the last inequality. $\bfD_{k,l}$ is a block matrix with $\bfD_{k,l}[i,i] = 2 \cdot E_{i,i}$ and $\bfD_{k,l}[i,j] = E_{i,j} + E_{j,i}$. So $\norm{\bfD_{k,l}}^{2}_{2} \leq 4n + 2n^2$. Thus
$$
E ~ \leq ~ n^2 (4n + 2n^2 )^t ~\leq ~ \calO(n^{2t}).
$$
Hence $\norm{\bfM_{0,2}}^{2t}_{2t} \leq \calO(n^{3t})$ dominates and the result follows.
\end{proof}



%

\bibliographystyle{alphaurl}
\bibliography{macros,madhur}

\newcommand{\etalchar}[1]{$^{#1}$}
\begin{thebibliography}{BHK{\etalchar{+}}19}

\bibitem[ABY20]{ABY20}
Richard Aoun, Marwa Banna, and Pierre Youssef.
\newblock Matrix poincar{\'e} inequalities and concentration.
\newblock {\em Advances in Mathematics}, 371:107251, 2020.

\bibitem[AMP21]{AMP21}
Kwangjun Ahn, Dhruv Medarametla, and Aaron Potechin.
\newblock Graph matrices: Norm bounds and applications.
\newblock {\em arXiv preprint https://arxiv.org/abs/1604.03423}, 2021.

\bibitem[AW15]{AW15}
Rados{\l}aw Adamczak and Pawe{\l} Wolff.
\newblock Concentration inequalities for non-lipschitz functions with bounded
  derivatives of higher order.
\newblock {\em Probability Theory and Related Fields}, 162(3):531--586, 2015.

\bibitem[BBLM05]{BBLM05}
St{\'e}phane Boucheron, Olivier Bousquet, G{\'a}bor Lugosi, and Pascal Massart.
\newblock {Moment inequalities for functions of independent random variables}.
\newblock {\em The Annals of Probability}, 33(2):514 -- 560, 2005.

\bibitem[BBvH23]{BBMVH23}
Afonso~S Bandeira, March~T Boedihardjo, and Ramon van Handel.
\newblock Matrix concentration inequalities and free probability.
\newblock {\em Inventiones {M}athematicae}, pages 1--69, 2023.

\bibitem[BGS19]{BGS19}
Sergey~G Bobkov, Friedrich G{\"o}tze, and Holger Sambale.
\newblock Higher order concentration of measure.
\newblock {\em Communications in Contemporary Mathematics}, 21(03):1850043,
  2019.

\bibitem[BHK{\etalchar{+}}19]{BHKKMP19}
Boaz Barak, Samuel Hopkins, Jonathan Kelner, Pravesh~K Kothari, Ankur Moitra,
  and Aaron Potechin.
\newblock A nearly tight sum-of-squares lower bound for the planted clique
  problem.
\newblock {\em SIAM Journal on Computing}, 48(2):687--735, 2019.

\bibitem[BHKX22]{BHKX22}
Mitali Bafna, Jun-Ting Hsieh, Pravesh~K. Kothari, and Jeff Xu.
\newblock Polynomial-time power-sum decomposition of polynomials.
\newblock In {\em 2022 IEEE 63rd Annual Symposium on Foundations of Computer
  Science (FOCS)}, pages 956--967, 2022.

\bibitem[BJM23]{BJM23}
Nikhil Bansal, Haotian Jiang, and Raghu Meka.
\newblock Resolving matrix {S}pencer conjecture up to poly-logarithmic rank.
\newblock In {\em Proceedings of the 55th ACM Symposium on Theory of
  Computing}, pages 1814--1819, 2023.

\bibitem[BvH22]{BVH22}
Tatiana Brailovskaya and Ramon van Handel.
\newblock Universality and sharp matrix concentration inequalities.
\newblock {\em arXiv preprint arXiv:2201.05142}, 2022.

\bibitem[DdL{\etalchar{+}}22]{DDOCHST22}
Jingqiu Ding, Tommaso d’Orsi, Chih-Hung Liu, David Steurer, and Stefan
  Tiegel.
\newblock Fast algorithm for overcomplete order-3 tensor decomposition.
\newblock In {\em Conference on Learning Theory}, pages 3741--3799. PMLR, 2022.

\bibitem[GHK15]{GHK15}
Rong Ge, Qingqing Huang, and Sham~M. Kakade.
\newblock Learning mixtures of gaussians in high dimensions.
\newblock In {\em Proceedings of the Forty-Seventh Annual ACM Symposium on
  Theory of Computing}, STOC '15, page 761–770, New York, NY, USA, 2015.
  Association for Computing Machinery.

\bibitem[Gur16]{Gur16}
Răzvan~Gheorghe Gurău.
\newblock {\em {Random Tensors}}.
\newblock Oxford University Press, 10 2016.

\bibitem[HSS15]{HSS15}
Samuel~B Hopkins, Jonathan Shi, and David Steurer.
\newblock Tensor principal component analysis via sum-of-square proofs.
\newblock In {\em Conference on Learning Theory}, pages 956--1006. PMLR, 2015.

\bibitem[HSS19]{HSS19}
Samuel~B Hopkins, Tselil Schramm, and Jonathan Shi.
\newblock A robust spectral algorithm for overcomplete tensor decomposition.
\newblock In {\em Conference on Learning Theory}, pages 1683--1722. PMLR, 2019.

\bibitem[HSSS16]{HSSS16}
Samuel~B Hopkins, Tselil Schramm, Jonathan Shi, and David Steurer.
\newblock Fast spectral algorithms from sum-of-squares proofs: tensor
  decomposition and planted sparse vectors.
\newblock In {\em Proceedings of the 48th ACM Symposium on Theory of
  Computing}, pages 178--191, 2016.

\bibitem[HT20]{HT20}
De~Huang and Joel~A. Tropp.
\newblock From poincar{\'e} inequalities to nonlinear matrix concentration.
\newblock {\em Bernoulli}, 2020.

\bibitem[HT21]{HT21}
De~Huang and Joel~A. Tropp.
\newblock Nonlinear matrix concentration via semigroup methods.
\newblock {\em Electronic Journal of Probability}, 26:Art. No. 8, Jan 2021.

\bibitem[JPR{\etalchar{+}}21]{JPRTX21}
Chris Jones, Aaron Potechin, Goutham Rajendran, Madhur Tulsiani, and Jeff Xu.
\newblock Sum-of-squares lower bounds for sparse independent set.
\newblock In {\em Proceedings of the 62nd IEEE Symposium on Foundations of
  Computer Science}, 2021.

\bibitem[KP20]{KP20}
Bohdan Kivva and Aaron Potechin.
\newblock Exact nuclear norm, completion and decomposition for random
  overcomplete tensors via degree-4 sos.
\newblock {\em arXiv preprint arXiv:2011.09416}, 2020.

\bibitem[KV00]{KV00}
Jeong~Han Kim and Van~H Vu.
\newblock Concentration of multivariate polynomials and its applications.
\newblock {\em Combinatorica}, 20(3):417--434, 2000.

\bibitem[Kwa87]{KW15}
Stanislaw Kwapien.
\newblock {Decoupling Inequalities for Polynomial Chaos}.
\newblock {\em The Annals of Probability}, 15(3):1062 --1071, 1987.

\bibitem[Lat06]{Latala06}
Rafa{\l} Lata{\l}a.
\newblock Estimates of moments and tails of gaussian chaoses.
\newblock {\em The Annals of Probability}, 34(6):2315--2331, 2006.

\bibitem[LY23]{LY23}
Cécilia Lancien and Pierre Youssef.
\newblock A note on quantum expanders, 2023.
\newblock \href {http://arxiv.org/abs/2302.07772} {\path{arXiv:2302.07772}}.

\bibitem[MJC{\etalchar{+}}12]{MJC12}
Lester Mackey, Michael Jordan, Richard Chen, Brendan Farrell, and Joel Tropp.
\newblock Matrix concentration inequalities via the method of exchangeable
  pairs.
\newblock {\em The Annals of Probability}, 42, 2012.

\bibitem[MT84]{MT84}
Terry~R. McConnell and Murad~S. Taqqu.
\newblock Double integration with respect to symmetric stable processes.
\newblock Technical report, Technical Report 618, Cornell Univ., 1984.

\bibitem[MW19]{MW19}
Ankur Moitra and Alexander~S Wein.
\newblock Spectral methods from tensor networks.
\newblock In {\em Proceedings of the 51st ACM Symposium on Theory of
  Computing}, pages 926--937, 2019.

\bibitem[OTR22]{OTR22}
Mohamed Ouerfelli, Mohamed Tamaazousti, and V.~Rivasseau.
\newblock Random tensor theory for tensor decomposition.
\newblock In {\em AAAI Conference on Artificial Intelligence}, 2022.

\bibitem[Oue22]{Oue22:thesis}
Mohamed Ouerfelli.
\newblock {\em New perspectives and tools for Tensor Principal Component
  Analysis and beyond}.
\newblock PhD thesis, Université Paris-Saclay, 2022.

\bibitem[OZ16]{DZ16}
Ryan O'Donnell and Yu~Zhao.
\newblock Polynomial bounds for decoupling, with applications.
\newblock In {\em Proceedings of the 31st Conference on Computational
  Complexity}, CCC '16, Dagstuhl, DEU, 2016. Schloss Dagstuhl--Leibniz-Zentrum
  fuer Informatik.

\bibitem[PG99]{PG:book}
Víctor~H. Peña and Evarist Giné.
\newblock {\em Decoupling: From dependence to independence}.
\newblock Springer-Verlag, 1999.

\bibitem[PMS95]{PMS95}
Victor~H. Pena and S.~J. Montgomery-Smith.
\newblock {Decoupling Inequalities for the Tail Probabilities of Multivariate
  $U$-Statistics}.
\newblock {\em The Annals of Probability}, 23(2):806 -- 816, 1995.

\bibitem[PMT16]{PMT16}
Daniel Paulin, Lester Mackey, and Joel~A. Tropp.
\newblock {Efron–Stein inequalities for random matrices}.
\newblock {\em The Annals of Probability}, 44(5):3431 -- 3473, 2016.

\bibitem[Raj22]{R22:thesis}
Goutham Rajendran.
\newblock {\em Nonlinear {R}andom {M}atrices and {A}pplications to the {S}um of
  {S}quares {H}ierarchy}.
\newblock PhD thesis, University of Chicago, 2022.

\bibitem[Rau10]{Rau10}
Holger Rauhut.
\newblock Compressive sensing and structured random matrices.
\newblock In {\em Theoretical Foundations and Numerical Methods for Sparse
  Recovery}, pages 1--92. De Gruyter, Berlin, New York, 2010.

\bibitem[RM14]{RM14}
Emile Richard and Andrea Montanari.
\newblock A statistical model for tensor pca.
\newblock In {\em Neural Information Processing Systems}, 2014.

\bibitem[RT23]{RT23}
Goutham Rajendran and Madhur Tulsiani.
\newblock Concentration of polynomial random matrices via efron-stein
  inequalities.
\newblock {\em Proceedings of the 2023 Annual ACM-SIAM Symposium on Discrete
  Algorithms (SODA)}, 2023.

\bibitem[SA13]{SA13}
Khalid Shebrawi and Hussien Albadawi.
\newblock Trace inequalities for matrices.
\newblock {\em Bulletin of the Australian Mathematical Society},
  87(1):139–148, 2013.

\bibitem[SS11]{SS11:multilinear}
Warren Schudy and Maxim Sviridenko.
\newblock Bernstein-like concentration and moment inequalities for polynomials
  of independent random variables: multilinear case.
\newblock {\em arXiv preprint arXiv:1109.5193}, 2011.

\bibitem[SS12]{SS12:polynomials}
Warren Schudy and Maxim Sviridenko.
\newblock Concentration and moment inequalities for polynomials of independent
  random variables.
\newblock In {\em Proceedings of the twenty-third annual ACM-SIAM symposium on
  Discrete Algorithms}, pages 437--446. SIAM, 2012.

\bibitem[Tro15]{Tro15}
Joel~A. Tropp.
\newblock An introduction to matrix concentration inequalities.
\newblock {\em Foundations and Trends in Machine Learning}, 8(1-2):1--230,
  2015.

\bibitem[Ver18]{Ver18}
Roman Vershynin.
\newblock {\em High-Dimensional Probability: An Introduction with Applications
  in Data Science}.
\newblock Cambridge University Press, 2018.

\end{thebibliography}

\end{document}